\documentclass[11pt, oneside]{article}
\usepackage[margin=1in]{geometry}

\usepackage{amsmath, amsfonts, amssymb, amsthm}
\usepackage{bm}
\usepackage{bbm} 
\usepackage[shortlabels]{enumitem}
\usepackage{fancyhdr}
\usepackage[T1]{fontenc}
\usepackage{graphicx}
\usepackage{listings}
\usepackage{sectsty}
\usepackage{setspace}
\usepackage{titlesec}
\usepackage[tracking]{microtype}
\usepackage{array, booktabs, ragged2e}
\usepackage[bookmarks,hypertexnames=false,debug,linktocpage=true,hidelinks]{hyperref}

\usepackage{color}

\allowdisplaybreaks 
\renewcommand{\thefootnote}{\arabic{footnote}}
\makeatletter
\renewcommand*{\@fnsymbol}[1]{\@arabic{#1}}
\makeatother

\hypersetup{ colorlinks, linktoc=all, linkcolor={black}, citecolor={black},
urlcolor={black} }

\titleformat{\chapter}
  {\color{black}\normalfont\LARGE\bfseries} {\thechapter}{1em} {}[]

\sectionfont{\color{black}} \subsectionfont{\color{black}}

\graphicspath{{./figures/}}

\newcommand\blfootnote[1]{%
  \begingroup
  \renewcommand\thefootnote{}\footnote{#1}%
  \addtocounter{footnote}{0}%
  \endgroup
}

\newcommand{\E}{\mathbb{E}}

\newcommand{\mbb}[1]{\mathbb{#1}}
\newcommand{\mf}[1]{\mathfrak{#1}}
\newcommand{\1}[1]{\mathbbm{1}_{\{#1\}}}
\newcommand{\2}[1]{\mathbbm{1}_{#1}}

\newcommand{\mc}[1]{\mathcal{#1}}

\newcommand*{\wt}[1]{\widetilde{#1}}

\newcommand{\ccmfis}[0]{\mc{I}^{\text{cCMF},i}_t}
\newcommand{\dcmfis}[0]{\mc{I}^{\text{dCMF},i}_t}
\newcommand{\wh}[1]{\widehat{#1}}
\newcommand{\setsep}{\;\big|\;}
\newcommand{\bigsetsep}{\;\bigg|\;}
\newcommand{\ess}{\bar{\mbb{X}}^N}
\newcommand{\eas}{\bar{\mbb{U}}^N}
\newcommand{\mss}{\widehat{\mbb{X}}^N}
\newcommand{\mas}{\widehat{\mbb{U}}^N}

 \DeclareMathOperator{\as}{a.s.}

\DeclareMathOperator{\iid}{i.i.d.}
 
\DeclareMathOperator{\DM}{DM} 
 
\DeclareMathOperator{\rep}{R} \DeclareMathOperator{\cex}{C-Ex}
\DeclareMathOperator{\csym}{C-Sym}

\newtheorem{theorem}{Theorem}[section]
\newtheorem{proposition}[theorem]{Proposition}
\newtheorem{corollary}[theorem]{Corollary}
\newtheorem{lemma}[theorem]{Lemma}

\theoremstyle{definition}
\newtheorem{definition}[theorem]{Definition}

\newtheorem{assumption}[theorem]{Assumption}

\begin{document}
\title{Mean-Field Systems with Heterogeneous Subteams: 
Optimality of Cluster-Symmetric Independent Policies and Equivalence with Decentralized McKean-Vlasov Control of Cluster-Representative Agents}
\author{Connor Braun$^1$\blfootnote{\llap{\textsuperscript{1}}Department of Mathematics and Statistics, Queen's University, Canada \texttt{connor.braun@queensu.ca}}, %
 Sina Sanjari$^2$\blfootnote{\llap{\textsuperscript{2}}Department of Mathematics and Computer Science, Royal Military College of Canada \texttt{sanjari@rmc.ca}}, %
 Naci Saldi$^3$\blfootnote{\llap{\textsuperscript{3}}Department of Mathematics, Bilkent University, Turkey \texttt{naci.saldi@bilkent.edu.tr}}, %
 Gunnar Blohm$^4$\blfootnote{\llap{\textsuperscript{4}}Centre for Neuroscience Studies, Queen's University, Canada \texttt{gunnar.blohm@queensu.ca}} %
 and Serdar Y\"uksel$^{5\,*}$\blfootnote{\llap{\textsuperscript{5}}Department of Mathematics and Statistics, Queen's University, Canada \texttt{yuksel@queensu.ca}\\\textsuperscript{*}This research is supported in part by the Natural Sciences and Engineering Research Council (NSERC) of Canada.}}
\maketitle
\setcounter{section}{0}
{\small
\begin{center}
  \begin{abstract}
  \noindent Across science and engineering, mean-field methods have been a
  powerful and versatile approach for the analysis of systems of many
  interacting elements. However, common arguments used to characterize an
  infinite population limit can be quite restrictive from a modeling perspective
  by requiring that all agents be identical (i.e. symmetric, or homogeneous). In
  this paper, we consider large interactive particle systems under agent
  heterogeneity for a class of discrete-time teams composed of finitely many
  species of agents, grouped  into symmetric subteams, called clusters. In
  particular, for the class of discounted, partially exchangeable cost criteria
  considered, we establish the optimality of centralized joint policies which
  are exchangeable within each cluster and depend on the agent ensemble only up
  to the state empirical distribution over each cluster. Following this, a
  generalization of De Finetti's theorem is used to demonstrate the
  subsequential convergence of these optimal policies to one which is
  decentralized (depending on only the local state and distribution over each
  cluster) and symmetric within each subteam as the population size approaches
  infinity. This solution is shown to induce a sequence of asymptotically
  optimal policies for the finite population problems which retain their
  structure and decentralization. Furthermore, our analysis justifies the
  optimality of a decentralized McKean-Vlasov team representation involving
  coupled representative agents for each of the clusters, and establishes a
  verification theorem/value iterations for the mean-field limit. In this way,
  we provide an avenue for analyzing complex, cooperative systems with finite
  heterogeneity and set the stage for further research on learning algorithms.
  
  \end{abstract}
\end{center}
}
\section{Introduction}
In various domains of science and engineering, physical systems admit a
decomposition into a collection of discrete decision makers (DMs) or agents.
Modelled DMs are equipped with local dynamics which may depend on the joint
action of the entire population. In turn, agents strive to select actions
shaping the joint state evolution in a manner which either (i) collectively
optimizes a shared objective, or (ii) optimizes a local objective which
disagrees with at least one other agent. In the former case the system is called
a stochastic team, and in the latter noncooperative, or a stochastic game.
Alternatively, a team can be regarded as a degenerate instance of a game where
agent interests are maximally aligned. In either case, agents may share all
available information, or else may act according to only their local, private
knowledge of the system. These modes of decision-making are called centralized
and decentralized, respectively, with the latter being significantly more
challenging to study, in general.\\[5pt]
\indent Complex strategic environments involving very large, weakly coupled
(i.e., when interaction strengths scale inversely with the number of impinging
elements) agent populations arise naturally in many applications. For these, a
powerful means of facilitating optimization/analysis is to pass to a suitable
mean-field limit representing the infinite population, where the problem often
simplifies to one involving a distributional flow summarizing the behavior of a
`generic' DM, in some sense. The characteristics (and hence utility) of this
representation depend not only on the manner in which it was derived, but
critically on the extent and type of agent heterogeneity present in the finite
population model. While this has lead to many bespoke mean-field models, the
general program is to solve a limiting control problem, and use this to
construct near-optimal decision making for the large (but finite) population.
Thus, by relaxing optimality, a natural problem is on how the mean-field
constrains the search for a policy without loss; what policy/information
structures are sufficient for asymptotic optimality, and how these relate to the
structure of the underlying population. \\[5pt]
\indent For the simplest case of indistinguishable agents (such a population is
sometimes called symmetric) with mean-field type interaction, limiting models
are called mean-field games (MFGs) (introduced contemporaneously in
\cite{Huang_Malhame_Caines_2006, Lasry_Lions_2007}). Previous works have
addressed the existence and uniqueness of mean-field Nash equilibria
\cite{Bardi_Fischer_2019,Carmona_Delarue_Lacker_2016,Light_Weintraub_2022}
and (on the structural front) the existence of symmetric mean-field equilibria
inducing symmetric asymptotic equilibria for the prelimit problem. In the case
of symmetric mean-field teams,
\cite{Sanjari_Saldi_Yüksel_2023,Sanjari_Saldi_Yüksel_2025} further established, without
\textit{a priori} structural restrictions,
the sufficiency of decentralized, mean-field sharing information channels to
compute symmetric optima for a class of continuous, exchangeable cost functions.
Of course, population symmetry is a prohibitive assumption for many
applications. This limitation is particularly evident for network games and
control systems; even when node dynamics are identical, symmetry further
requires that agents be homogeneously coupled in an all-to-all (i.e. complete)
fashion.\\[5pt]
\indent Thus, research efforts have been increasingly directed at relaxing agent
symmetry requirements. Graphon mean-field games (GMFGs)
\cite{Caines_Huang_2019,Caines_Huang_2021,Parise_Ozdaglar_2019} serve as
limiting models when agent interactions (and possibly dynamics) depend on their
nodal location in a finite graph. Mean-field convergence depends on that of the
underlying network topology to a graphon, representing a graph on the node set
$[0,1]$ (see \cite{Lovasz_2012} for an extensive overview of graph limit theory)
which in turn indexes a continuum of graphon-coupled agents (or subpopulation
distributions, in the case of \cite{Caines_Huang_2019,Caines_Huang_2021}) as
total population size and (potentially, but not necessarily) agent-node diversity diverges. For
uncontrolled particle systems involving graphon-type coupling, convergence to a
mean-field has been established via laws of large numbers
\cite{Bayraktar_Chakraborty_Wu_2022,Bet_Coppini_Nardi_2024} and concentration of
measures arguments \cite{Bayraktar_Kim_2024,Bayraktar_Wu_2023} for dense and
sparse regimes, with the latter requiring mild conditions on the initial
distributions and their moments. However, these convergences are for the state
trajectory empirical measures (over the entire population) with respect to the
averaged law of the limiting system. This method of handling unbounded diversity
is a recurring theme in the GMFG literature
\cite{Caines_Huang_2019,Caines_Huang_2021,Cui_Koeppl_2021} and has the
significant disadvantage that it does not preserve subpopulation structures
which may be present in the finite models. Thus, the subpopulation membership of
individual agents is lost in the infinite population, and approximation schemes
depend on sampling (see \cite{Aurell_Carmona_Laurière_2022} for an example in
the linear quadratic case) or else make continuity assumptions on the optimal
strategy {\it a priori} \cite{Caines_Huang_2019}. Thus, owing in part to the
complexity of the limit, structural constraints for GMFG policies remain
outstanding. Additionally, decentralization is typically assumed {\it a priori}
(e.g., see
\cite{Aurell_Carmona_Laurière_2022,Caines_Huang_2019,Caines_Huang_2021,Cui_Koeppl_2021,Lacker_Soret_2022,Vasal_Mishra_Vishwanath_2021,Xu_Gou_Huan_2025})
which may be appropriate for games when the comparison of equilibria is not at
issue, but can be at a great loss when considering global performance, such as
in the case of cooperative games.\\[5pt]
\indent In this paper, we develop structural results for mean-field models using
a different but closely related approach as for GMFGs\footnote{For clarity, our
model corresponds to the case where agents are coupled at the subpopulation
level (as in \cite{Caines_Huang_2019,Caines_Huang_2021}, for instance) and where
the corresponding graphon is both trivial (in the sense that it has an exact
finite graph representation; sometimes called a step-graphon) and constant in
the total population size. Stronger, agent-to-agent coupling schemes have been sometimes distinguished as `Graphon Field Games' (as in \cite{Gao_Tchuendom_Caines_2021}) and are less comparable.}. Specifically, we
consider
discrete time stochastic cooperative games (i.e., teams) played amongst a
population with exactly $M\geq 1$ agent species as the total population
diverges. Each agent of a given species is regarded as belonging to the same
subpopulation, or cluster, but agents still cooperate across clusters, striving
for a global optimum of a shared cost function. Models exhibiting finite
subpopulation structure have appeared sporadically in the literature; in the
case of uncontrolled particle diffusions, \cite{Bayraktar_Chakraborty_Wu_2022}
note a connection between the graphon limit of a sequence of block stochastic
models and a finite system of coupled McKean-Vlasov equations, and
\cite{Bhamidi_Budhiraja_Wu_2019,Budhiraja_Wu_2016} obtain a law of large
numbers/propagation of chaos result for the case of static and time-varying
random graphs. These have also been studied as a generalization of MFGs
admitting finite heterogeneity, and sometimes called multi-population MFGs
(MPMFGs) \cite{Bensoussan_Huang_Laurière_2018,
Feleqi_2013,Fujii_2020,Huang_Malhame_Caines_2006}. However, just as for GMFGs,
these papers make {\it a priori} decentralization assumptions, and do not
address the structure of approximate optima for the finite problem. Most
recently, \cite{Lambrecht_Laurière_2025} considered an almost identical model to
ours, involving a game played amongst finitely-many internally cooperating
subpopulations, but with common disturbance processes at the sub- and complete
population levels. While they were able to prove the existence of
cluster-symmetric closed loop equilibria, this structure was once more imposed
{\it a priori} by lifting the problem to a collection of
cluster-representatives. As such, their analysis is similarly inconclusive for
cooperative games with respect to global optimality.\\[5pt]
\indent Our setup and analyses build directly on those of \cite{Bauerle_2023}
and \cite{Sanjari_Saldi_Yüksel_2025} which each examined the case of symmetric
teams. In the former, an equivalent measure-valued MDP is used to establish the
sufficiency of mean-field sharing information for a centralized team by way of
Blackwell's theorem \cite{Blackwell_1964} whence convergence of the finite
population value functions to that of a mean-field problem is established by way
of the Glivenko-Cantelli lemma. The same measure-valued formulation was used in
\cite{Sanjari_Saldi_Yüksel_2025} to further establish the optimality of
exchangeable policies, for which weak subsequential convergence to an
infinitely-exchangeable limiting strategic measure (along finite marginals) can
be characterized by the convergence in law of random empirical measures.
Subsequently, they obtained two key results: (i) by De Finetti's theorem, the
set of infinitely-exchangeable policies coincides with the set of symmetric
policies which are conditionally independent given a common random variable
(which turns out to be the weak limit of the state empirical distributions
$\as$, called the directing measure) and (ii) that the finite population value
functions converge to that of the mean-field via the aforementioned
measure-valued formulation. Further, the limiting value function corresponds to
a McKean-Vlasov type problem with a single representative agent, which is
optimal for the original problem in light of (i).\\[5pt]
\indent In the case of finitely-many cooperating subpopulations, the main technical
distinction is that we work with the joint convergence of a collection of
cluster empirical distributions to an array of mean-fields which are dependent,
in general. For this we work with a relaxed notion of exchangeability, which we
call cluster-exchangeability, and develop the necessary mathematical analysis
required due to the agent heterogeneity.\\[5pt]
\indent This paper is organized as follows. In Section 2 we introduce the
stochastic dynamic team problem with finitely-many agent species, along with the
main assumptions, definitions and notational devices used throughout the sequel.
In Section 3 we present a measure-valued MDP representation of the original
problem, establish equivalence with the ensemble, and show that
cluster-exchangeable policies are optimal. In Section 4 we turn our attention to
the mean-field limit, developing some theory on the weak convergence of
cluster-exchangeable processes and prove the main result on the structure of
optima for the mean-field limit. Finally, Section 5 introduces a multi-agent
decentralized McKean-Vlasov team problem (with a representative for each agent
cluster) and establishes two important results: first on computing mean-field
solutions via value iterations for the McKean-Vlasov team, and second how these
can induce approximate optima for the finite population team. Over this course
we make the following main contributions:
\begin{enumerate}[1.]
  \item We examine the structure of optimal policies for stochastic teams
  involving finitely many agent species along with their mean-field limits. By
  grouping agents into symmetric clusters, the system admits a form of partial
  exchangeability which we call cluster-exchangeability. For the finite
  ensemble, we establish equivalence with a MDP on the array of cluster
  empirical measures in Theorem \ref{thm5}, and apply this formulation in
  Theorem \ref{thm6} to establishes the optimality of $N$-cluster-exchangeable
  (C-Ex$^N$) Markovian policies.
  \item We provide discussion generalizing classical representation/convergence
  results for fully exchangeable processes to those which are only infinitely
  cluster-exchangeable (C-Ex). Subsequently, Theorem \ref{thm7} extends the
  optimality of cluster-exchangeable policies to the infinite ensemble which
  (via a De Finetti-type representation for C-Ex processes) justifies the
  applicability of a mean-field MDP representation for the infinite ensemble.
  \item We develop the above mean-field MDP, along with a mean-field dynamic
  program and establish a detailed relationship between its policies and those
  of the infinite ensemble in Theorems \ref{thm9}-\ref{thm10}. In particular,
  mean-field policies induce cluster-symmetric, decentralized and conditionally
  independent (under cluster mean-field sharing) policies for the infinite
  population which incur the same cost. 
  \item As a main result, we refine Theorem \ref{thm7} in Theorem \ref{thm11} to
  establish existence of an optimal policy for the infinite ensemble which
  admits the above structural and decentralization characteristics, and is
  realized by the mean-field dynamic program.
  \item In Corollary \ref{cor2} , this same mean-field dynamic program is shown
  to correspond to a decentralized McKean-Vlasov team of cluster-representative
  agents exhibiting a piecewise constant interaction structure. Hence, the
  preceding analysis justifies the optimality of this simplified representation
  for the infinite population without any \textit{a priori} constraints on the
  centralized IS for admissible policies. Additionally, Corollary \ref{cor3}
  demonstrates how mean-field solutions induce decentralized, cluster-symmetric
  policies for the finite population which are asymptotically optimal in the
  large population limit.
  \item Through these results and their supporting analyses, we generalize
  recent results on the case of a single cluster team to the case involving
  finitely many agent types.
\end{enumerate}

\section{Cluster-Exchangeable Team Models}\label{sec1}
In this section, we introduce a $N$-agent dynamic team problem and its infinite
population limit. The essential structural feature of these models is that
agents are symmetric within each of $M\geq 1$ subpopulations, which we refer to
as clusters. We begin by defining our notations, including those needed to
describe the population structure and work with empirical measures over standard
Borel spaces. Then, after presenting the models and their optimality criteria
(to serve as a benchmark for subsequent information structure (IS) relaxations)
we summarize their structural characteristics by introducing the notion of
cluster-exchangeability. From this we derive two policy classes: those which are
cluster-exchangeable, and those which further admit a conditionally independent
and cluster-symmetric structure. Both of these reflect the structure of the
ensemble. In subsequent sections, we examine the optimality of these subclasses
along with their informational demands.

\subsection{Finite Population Teams \texorpdfstring{$\mf{G}^N$}{TEXT}}\label{sec2}
We begin with notation. For $M\geq 1$ henceforth fixed, let $V=\{1,2,\dots,M\}$
index a set of agent clusters comprising a population of size $N\geq M$ (to
preclude empty clusters) indexed by $\mc{N}=\{1,2,\dots,N\}$. Each node $j\in V$
is associated with a subpopulation, or cluster, indexed by
$\mc{C}^N_j\neq\emptyset$ so that $\mc{C}^N:=\{\mc{C}^N_j\}_{j\in V}\subset
2^\mc{N}$ forms a nonempty partition of $\mc{N}$. Each $\DM$ is associated to
its cluster by a function $C^N:\mc{N}\rightarrow V$, where $C^N(i)=j$ if and
only if $i\in\mc{C}^N_j$. The subpopulation size of cluster $j\in V$ is denoted
$N^N_j:=|\mc{C}^N_j|$ when the complete population has size $N\geq M$
\footnote{When considering $N\geq M$ fixed we sometimes omit the $N$-dependence
of these objects, writing $\{N_j\}_{j\in V}$, $\{\mc{C}_j\}_{j\in V}$, and $C$
rather than $\{N^N_j\}_{j\in V}$, $\{\mc{C}_j^N\}_{j\in V}$, $C^N$ and so
on.}.\\[5pt] 
\indent Some notational conventions are needed to compactly/flexibly express
joint quantities over the entire population and individual clusters. With
$\mbb{Y},\{\mbb{Y}^j\}_{j\in V}$ a collection of standard Borel spaces,
$y^i_t,z^i_t\in\mbb{Y}^{C(i)}$ for $i, t\in\mbb{N}$ and $N\geq M$ fixed, the
array of points
$(y^1_t,y^2_t,\dots,y^N_t)\in\mbb{Y}^{C(1)}\times\mbb{Y}^{C(2)}\times\cdots\times\mbb{Y}^{C(N)}$
will be denoted with the shorthand $y^{1:N}_t$. Similarly, if $A\subset\mc{N}$
indexes a collection $\{y^i_t\}_{i\in A}$, then $y^A_t:=(y^i)_{i\in A}$. Tuple
parentheses take precedence over these notations, such that
$(y^{1:N}_t,z^{1:N}_t):=((y^1_t,z^1_t),(y^2_t,z^2_t),\dots,(y^M_t,z^M_t))$. When
additionally indexing time, we set
\[y^{1:N}_{0:t}:=(y^{1:N}_0,y^{1:N}_1,\dots,y^{1:N}_t)\quad\text{and}\quad(y^{1:N}_{0:t},z^{1:N}_{0:t})=((y^{1:N}_0,z^{1:N}_0),(y^{1:N}_1,z^{1:N}_1),\dots,(y^{1:N}_t,z^{1:N}_t))\]
for $t\geq 0$. We will denote the Cartesian product
$\mbb{Y}^{1:M}:=\prod_{j=1}^M\mbb{Y}^j$, with an analogous definition for
$\mbb{Y}^{1:\infty}$ in case we have infinitely-many standard Borel spaces.
These are always equipped with the product topology and corresponding Borel
$\sigma$-algebra.\\[5pt]
\indent The set of $N$-point empirical measures on $\mbb{Y}$ will be given by
\begin{align}
  \mc{P}^N_E(\mbb{Y}):=\left\{\nu\in\mc{P}(\mbb{Y})\bigsetsep\nu=\frac{1}{N}\sum_{i=1}^N\delta_{y^i},\;y^i\in\mbb{Y}\;\text{for}\;1\leq i\leq N\right\},\label{eq9}
\end{align}
such that any empirical measure over $\mbb{Y}$ resides within
$\mc{P}_E(\mbb{Y}):=\cup_{N\geq 1}\mc{P}^n_E(\mbb{Y})$. Accordingly, we
introduce a function $\mu[\cdot]$ which, for a point
$y^{1:N}\in\prod_{i=1}^N\mbb{Y}$, instantiates its empirical distribution via
$\mu[y^{1:N}]:=\frac{1}{N}\sum_{i=1}^N\delta_{y^i}$. This notation will be used
to construct empirical measures over a variety of spaces and with arrays of any
(finite) number of elements (i.e., without a more precise notation such as
$\mu_{\mbb{Y}}^N[\cdot]:\prod_{i=1}^N\mbb{Y}\rightarrow\mc{P}^N_E(\mbb{Y})$).
Finally, we write $\mc{P}(\mbb{Y}^2|\mbb{Y}^1)$ for the set of stochastic
kernels (i.e., measurable maps from $\mbb{Y}^1$ to $\mc{P}(\mbb{Y}^2)$) and, if
$Y$ is a $\mbb{Y}$-valued random variable, we write $\mc{L}(Y)$ to denote its
law.\\[5pt]
\indent Proceeding to the finite population team problem, each agent
$i\in\mc{N}$ is equipped with standard Borel state, action and disturbance
spaces $\mbb{X}^{C(i)}$, $\mbb{U}^{C(i)}$ and $\mbb{W}^{C(i)}$ respectively,
identical within clusters, and dynamics given by
$f_{C(i)}:\mbb{X}^{C(i)}\times\mbb{U}^{C(i)}\times\mc{P}(\mbb{X}^1)\times\mc{P}(\mbb{X}^2)\times\cdots\mc{P}(\mbb{X}^M)\times\mbb{W}^{C(i)}\rightarrow\mbb{X}^{C(i)}$
such that
\begin{align}
  x^i_{t+1}=f_{C(i)}(x^i_t,u^i_t,\mu^{1:M}_t,w^i_t),\quad\text{and}\quad x^{1:N}_{t+1}=F^N(x^{1:N}_t,u^{1:N}_t,\mu^{1:M}_t,w^{1:N}_t),\quad \forall\:t\geq 0\label{eq1}
\end{align}
where $F^N=(f_{C(1)},f_{C(2)},\dots,f_{C(N)})$ denotes the joint team dynamics
on the common state $\bar{\mbb{X}}^N:=\prod_{i\in\mc{N}}\mbb{X}^{C(i)}$ and
action $\bar{\mbb{U}}^N:=\prod_{i\in\mc{N}}\mbb{U}^{C(i)}$ spaces. Note that
agents are only weakly coupled via the cluster empirical measures
$\mu^j_t:=\mu[x^{\mc{C}_j}_t]\in\mc{P}_E(\mbb{X}^j)$, $j\in V$. Initial states
and disturbance processes are $\iid$ within clusters, but only independent
between. That is,
\begin{align}
  x^i_0\overset{\iid}{\sim}\nu^{j}_0\in\mc{P}(\mbb{X}^j),\quad\text{and}\quad w^i_t\overset{\iid}{\sim}\nu^{j}\in\mc{P}(\mbb{W}^j)\quad \forall\:i\in\mc{C}_j,\,t\geq 0,\,j\in V,
\end{align}
whence we set $\nu_0:=\bigotimes_{i=1}^N\nu^{C(i)}_0$ and
$\nu:=\bigotimes_{i=1}^N\nu^{C(i)}$ (with the population size $N\geq M$ left
implicit) to be the joint distributions for the initial states and disturbances,
respectively. The dynamics (\ref{eq1}) can be equivalently realized by a
collection of stochastic kernels
$\mc{T}_j\in\mc{P}(\mbb{X}^j|\mbb{X}^j\times\mbb{U}^j\times\mc{P}(\mbb{X}^1)\times\mc{P}(\mbb{X}^2)\times\cdots\times\mc{P}(\mbb{X}^M))$
for $j\in V$. By an abuse of notation, we take $f_i$, $\mc{T}_i$, $\mbb{X}^i$,
$\mbb{U}^i$ and $\mbb{W}^i$ as shorthand for $f_{C(i)}$, $\mc{T}_{C(i)}$,
$\mbb{X}^{C(i)}$, $\mbb{U}^{C(i)}$ and $\mbb{W}^{C(i)}$ (respectively) when
indexing with $i\in\mc{N}$. To make the dependence of $\mu^{1:M}_t$ on a
particular joint state $x^{1:N}\in\bar{\mbb{X}}^N$ explicit, define
$\mu^j[x^{1:N}]:=\mu[x^{\mc{C}_j}]$ for the empirical measure over a single
cluster, and
$\mu^{1:M}[x^{1:N}_t]:=(\mu^1[x^{1:N}_t],\mu^2[x^{1:N}_t],\dots,\mu^{M}[x^{1:N}_t])$
the array of cluster-empirical measures corresponding to
$x^{1:N}_t\in\bar{\mbb{X}}^N$. Otherwise, the relation
$\mu^j_t=\mu[x^{\mc{C}_j}_t]$ for $j\in V$ is left implicit. Rather than total
symmetry, agents here are only symmetric within each cluster, which is manifest
in the form of the joint transition kernel
$\mc{T}^N\in\mc{P}(\bar{\mbb{X}}^N|\bar{\mbb{X}}^N\times\bar{\mbb{U}}^N)$;
$\forall\:A\in\mc{B}(\bar{\mbb{X}}^N)$,
\begin{align}
  \mc{T}^N(A|x^{1:N}_t,u^{1:N}_t):=P(x^{1:N}_{t+1}\in A|x^{1:N}_t,u^{1:N}_t)=\prod_{j\in V}\prod_{i\in\mc{C}_j}\mc{T}_{j}(x^i_{t+1}\in A|x^i_t,u^i_t,\mu^{1:M}_t).\label{eq3}
\end{align}
\indent Since one of our primary objectives is to study global optimality under
various information structure (IS) relaxations, agents are \textit{a priori}
endowed with completely centralized information given by
\begin{align}
  \mc{I}^{i}_t:=\{x^{1:N}_{0:t},u^{1:N}_{0:t-1}\},\quad\forall\:i\in\mc{N}.
\end{align}
With this serving as a benchmark, we consider performance under the (i)
centralized cluster mean-field sharing (cCMF) and (ii) decentralized cluster
mean-field sharing (dCMF) IS, which are generated by the processes
\begin{align}
  \quad\mc{I}^{\text{cCMF},i}_t:=\{\mu^{1:M}_{0:t}\},\quad\text{and}\quad \mc{I}^{\text{dCMF},i}_{t}:=\{x^i_{0:t},u^i_{0:t-1},\tilde{\mu}^{1:M}_{0:t}\}\quad\forall\:i\in\mc{N},\,t\geq 0,\label{eq4}
\end{align}
respectively. Note that the process $\tilde{\mu}^{1:M}_{0:t}$ used in the dCMF
IS is not the array of state empirical measures, but rather the mean-field
process (developed in $\S$\ref{sec10}). The main utility of the cCMF IS is to facilitate a
centralized optimization for an equivalent finite population
representation.\\[5pt]
\indent Corresponding to the completely centralized IS, define the history spaces
$\mbb{H}^N_0:=\bar{\mbb{X}}^N$ and
$\mbb{H}^N_t=\mbb{H}^N_{t-1}\times(\bar{\mbb{X}}^N\times\bar{\mbb{U}}^N)$ for
$t\geq 1$. Then an admissible, centralized policy on the team's joint action at
time $t\geq 0$ is just a stochastic kernel
$\pi_t\in\bar{\Pi}^{N}_t:=\mc{P}(\bar{\mbb{U}}^N|\mbb{H}^N_t)$. Policies over
the finite and infinite horizons are given by
$\bar{\Pi}^{N,T}:=\prod_{t=0}^{T-1}\bar{\Pi}^N_t$ and $\bar{\Pi}^N:=\prod_{t\geq
0}\bar{\Pi}^N_t$, respectively. The set of deterministic policies (generated by
measurable maps $\gamma_t:\mbb{H}^N_t\rightarrow\bar{\mbb{U}}^N$ for $t\geq 0$)
is denoted by replacing `$\Pi$' with `$\Gamma$' in the above notation, whence we
recognize $\bar{\Gamma}^N_t\subset\bar{\Pi}^N_t$ by considering
$\delta_{\gamma_t}\in\mc{P}(\bar{\mbb{U}}^N|\mbb{H}^N_t)$.\\[5pt]
\indent To complete the team description, we propose a single cost function
which the ensemble strives to collectively infimize. This is given by one of
\begin{align*}
  &J^N_T(\pi, \nu_0):=\E^{\pi}_{\nu_0}\bigg[\sum_{t=0}^{T-1}\beta^tc^N(x^{1:N}_t,u^{1:N}_t)\bigg],\quad\forall\:\pi\in\bar{\Pi}^{N,T}\\
  &J^N(\pi,\nu_0):=\E^{\pi}_{\nu_0}\bigg[\sum_{t=0}^\infty\beta^tc^N(x^{1:N}_t,u^{1:N}_t)\bigg],\quad\forall\:\pi\in\bar{\Pi}^N
\end{align*}
corresponding to the finite and infinite horizons, respectively. In particular,
the stagewise cost
$c^N:\bar{\mbb{X}}^N\times\bar{\mbb{U}}^N\rightarrow\mbb{R}_+$ is taken to be of
the form
\begin{align}
  c^N(x^{1:N}_t,u^{1:N}_t)=\sum_{j\in V}\frac{1}{N_j}\sum_{i\in\mc{C}_j}c^j(x^i_t,u^i_t,\mu^{1:M}_t)\label{eq7}
\end{align}
for a collection of component cost functions
$c^j:\mbb{X}^j\times\mbb{U}^j\times\mc{P}(\mbb{X}^1)\times\mc{P}(\mbb{X}^2)\times\cdots\times\mc{P}(\mbb{X}^M)\rightarrow\mbb{R}_+$,
$j\in V$. That is, each agent's contribution to the collective running cost
depends on their cluster membership, and is weighted according to the size of
their cluster. Moreover, it can be expressed as a comparison of their individual
state-action profile with the empirical activity of each subpopulation and/or a
comparison of the empirical distributions themselves. A policy $\pi^{\ast}$ is
team-optimal for these problems if and only if
\begin{align*}
  &J_T^{N,\ast}(\nu_0):=J_T^N(\pi^{\ast},\nu_0)=\inf_{\pi\in\bar{\Pi}^{N,T}}J^N_T(\pi,\nu_0),\\
  &\qquad\text{or}\quad J^{N,\ast}(\nu_0):=J^N(\pi^{\ast},\nu_0)=\inf_{\pi\in\bar{\Pi}^{N}}J^N(\pi,\nu_0)
\end{align*}
with the provisio that $\pi^{\ast}\in\bar{\Pi}^{N,T}$ in the former and
$\pi^{\ast}\in\bar{\Pi}^{N}$ in the latter. We shall refer to the finite and
infinite horizon discounted, centralized team problems with $\mf{G}^N_T$, and
$\mf{G}^N_\infty$, respectively, or to either interchangeably with just
$\mf{G}^N$.

\subsection{Infinite Population Teams \texorpdfstring{$\mf{G}^\infty$}{TEXT} and Cluster-Exchangeability}\label{sec3}
The infinite population problem is constructed in analogy with $\mf{G}^N$, but
now with $N=\infty$, or $\mc{N}=\mbb{N}$. The state and action spaces are now
$\bar{\mbb{X}}:=\prod_{i\in\mbb{N}}\mbb{X}^{C(i)}$,
$\bar{\mbb{U}}:=\prod_{i\in\mbb{N}}\mbb{U}^{C(i)}$, with corresponding history
spaces $\mbb{H}_0:=\bar{\mbb{X}}$,
$\mbb{H}_t:=\mbb{H}_{t-1}\times(\bar{\mbb{X}}\times\bar{\mbb{U}})$  for $t\geq
1$. Admissible policies are those residing in
$\bar{\Pi}^T:=\prod_{t=0}^{T-1}\bar{\Pi}_t$, or $\bar{\Pi}:=\prod_{t\geq
0}\bar{\Pi}_t$ where $\bar{\Pi}_t:=\mc{P}(\bar{\mbb{U}}|\mbb{H}_t)$. Policies
are assessed according to the new cost criteria
\begin{align}
  &J_T(\pi,\nu_0):=\limsup_{N\rightarrow\infty}J^N_T(\pi|_N,\nu_0)\label{eq8}\\
  &J(\pi,\nu_0):=\limsup_{N\rightarrow\infty}J^N(\pi|_N,\nu_0)
\end{align}
where $\pi\in\bar{\Pi}^{T}$ in the former and $\pi\in\bar{\Pi}$ in the latter.
The notation $\pi|_N$ denotes the policy obtained by restricting the joint
measure on $\prod_{t=0}^{T-1}\bar{\mbb{X}}\times\bar{\mbb{U}}$ induced by $\pi$
to $\prod_{t=0}^{T-1}\bar{\mbb{X}}^N\times\bar{\mbb{U}}^N$ (and similarly for
the infinite horizon). In particular, if
$P^\pi_{\nu_0}\in\mc{P}(\prod_{t=0}^{T-1}\bar{\mbb{X}}\times\bar{\mbb{U}})$ is
the strategic measure induced by $\pi\in\bar{\Pi}^T$ and $\nu_0$, then
(\ref{eq8}) can be written
\begin{align}
  J_T(\pi,\nu_0)=\limsup_{N\rightarrow\infty}\int_{\prod_{t=0}^{T-1}\bar{\mbb{X}}^N\times\bar{\mbb{U}}^N}\sum_{t=0}^{T-1}\beta^tc^NdP^\pi_{\nu_0}.
\end{align}
A policy $\pi^{\ast}$ is team-optimal for these if and only if
\begin{align*}
  J^\ast_T(\nu_0):=J_T(\pi^{\ast},\nu_0)=\inf_{\pi\in\bar{\Pi}^{T}}J_T(\pi,\nu_0),\quad\text{or}\quad J^\ast(\nu_0):=J(\pi^{\ast},\nu_0)=\inf_{\pi\in\bar{\Pi}}J(\pi,\nu_0)
\end{align*}
with the provisio that $\pi^{\ast}\in\bar{\Pi}^{T}$ in the former and
$\pi^{\ast}\in\bar{\Pi}$ in the latter. We shall refer to the infinite
population, centralized team problems with $\mf{G}^\infty_T$ and
$\mf{G}^\infty_\infty$, respectively, or to either of them interchangeably with
just $\mf{G}^\infty$.\\[5pt]
\indent Regardless of the population size, all analyses in the sequel depend on
the following assumptions.
\begin{assumption}\label{ass1} For $M\leq N\leq \infty$, the following
  statements regarding both of $\mf{G}^N_T$, $\mf{G}^N_\infty$ are true:
  \begin{enumerate}[(i)]
    \item $\forall j\in V$, the spaces $\mbb{X}^j$ and $\mbb{U}^j$ are compact.
    \item $\forall j\in V$, $w\in\mbb{W}$, the functions
    $f^j(\cdot,\cdot,\cdot,w):\mbb{X}^j\times\mbb{U}^j\times\mc{P}(\mbb{X}^1)\times\mc{P}(\mbb{X}^2)\times\cdots\times\mc{P}(\mbb{X}^M)\rightarrow\mbb{X}^j$
    are jointly continuous.
    \item $\forall j\in V$, the stagewise cost components
    $c^j:\mbb{X}^j\times\mbb{U}^j\times\mc{P}(\mbb{X}^1)\times\mc{P}(\mbb{X}^2)\times\cdots\times\mc{P}(\mbb{X}^M)\rightarrow\mbb{R}_+$
    are jointly continuous and bounded.
    \item $\forall j\in V$, $\lim_{N\rightarrow\infty}N_j^N/N>0$.
  \end{enumerate}
\end{assumption}
Under assumption \ref{ass1} (i-iii), (and by viewing the centralized team
equivalently as a single agent with augmented state/action spaces) classical
measurable selection conditions hold and an optimal deterministic, Markovian
policy for $\mf{G}^N_T$ is available via the Bellman recursions:
\begin{align}
  &J^N_{T-1,T}(x^{1:N}):=\inf_{u^{1:N}\in\bar{\mbb{U}}^N}\big\{c(x^{1:N},u^{1:N})\big\}\label{eq95}\\
  &J^N_{t,T}(x^{1:N}):=\inf_{u^{1:N}\in\bar{\mbb{U}}^N}\bigg\{c(x^{1:N},u^{1:N})+\beta\int_{\bar{\mbb{X}}^N}J^N_{t+1,T}(x)\mc{T}^N(dx|x^{1:N},u^{1:N})\bigg\}\label{eq96}
\end{align}
where $0\leq t\leq T-2$, $x^{1:N}\in\ess$. Then, by the monotone convergence
theorem and standard contraction argument for the Bellman operator realizing
these recursions, the limit
\begin{align}
  J^N_\infty(x^{1:N}):=\lim_{T\rightarrow\infty}J^N_{t,T}(x^{1:N}),\quad \forall\:x^{1:N}\in\bar{\mbb{X}}^N\label{eq97}
\end{align}
exists, is insensitive to the value of $t\geq 0$ and thus satisfies the
discounted cost optimality equation
\begin{align}
  J^N_\infty(x^{1:N})=\inf_{u^{1:N}\in\eas}\bigg\{c(x^{1:N},u^{1:N})+\beta\int_{\ess}J^N_\infty(x)\mc{T}^N(dx|x^{1:N},u^{1:N})\bigg\}\label{eq98}
\end{align}
which (again under assumption \ref{ass1}) admits a measurable selector inducing
an optimal deterministic stationary policy for $\mf{G}^N_\infty$ (see \cite[$\S$
4.2]{Hernandez-Lerma_Lasserre_1996}). However, standard dynamic programming
alone is insufficient to obtain the desired relaxations on the IS or the
conditional independence of agent decisions. Rather, this will critically depend
on the on the various forms of symmetry found in $\mf{G}^N$ which can be
summarized as a relaxed form of exchangeability; an invariance under a subgroup
of the $\mc{N}$-permutations which we shall call cluster permutations.
\begin{definition}[Cluster-Exchangeability]\label{def1} For any
  $A\subset\mbb{N}$, let $\mc{S}_A$ denote the set of permutations of $A$. Then
  $\mc{S}_{\mc{C}^N}$ is called the set of cluster-permutations (with respect to
  partition $\mc{C}^N$) and is characterized by
    \[\sigma\in\mc{S}_{\mc{C}^N}\quad\Longleftrightarrow\quad\sigma\big|_{\mc{C}^N_j}\in\mc{S}_{\mc{C}^N_j},\quad\forall\:j\in
    V.\] A random variable $Y^{1:N}$ is then called $N$-cluster-exchangeable
    (C-Ex$^N$) with respect to $\mc{C}^N$ if and only if
    $\mc{L}(Y^{1:N})=\mc{L}(Y^{\sigma(1):\sigma(N)})$ for all
    $\sigma\in\mc{S}_{\mc{C}^N}$. An infinite collection $Y^{1:\infty}$ is
    instead called infinitely cluster-exchangeable (C-Ex) if and only if
    $\mc{L}(Y^{1:N})=\mc{L}(Y^{\sigma(1):\sigma(N)})$ for all
    $\sigma\in\mc{S}_{\mc{C}^N}$ and each $N\geq M$.
\end{definition}
With respect to the broader literature, cluster-exchangeability is nothing more
than a form of partial exchangeability suited to the subpopulation structure of
our model. It is manifest by the symmetries of our team problem as follows: for
all $x^{1:N}\in\ess$, $u^{1:N}\in\eas$, we have the invariances:
\begin{align*}
  \begin{array}{l}
    \mu^{1:M}[x^{1:N}]=\mu^{1:M}[x^{\sigma(1):\sigma(N)}],\\
    c^N(x^{1:N},u^{1:N})=c^N(x^{\sigma(1):\sigma(N)},u^{\sigma(1):\sigma(N)}),\\
    \mc{T}^N(d\bar{x}^{1:N}|x^{1:N},u^{1:N})=\mc{T}^N(d\bar{x}^{\sigma(1):\sigma(N)}|x^{\sigma(1):\sigma(N)},u^{\sigma(1):\sigma(N)})
  \end{array}
\end{align*}
$\forall\,\sigma\in\mc{S}_{\mc{C}^N}$, which is easily seen by reviewing their
definitions. That is, the empirical measure function $\mu^{1:M}[\cdot]$,
stagewise cost $c^N$ and transition kernel $\mc{T}^N$ defining $\mf{G}^N$ are
each C-Ex$^N$. From this observation, a natural question is on the performance
of policies reflecting this symmetry -- defined by constraining the dependence
structure of agents' actions.
\begin{definition}\label{def5} For $\mf{G}^N_T$, we define the set of C-Ex$^N$
  policies $\Pi^{N,T}_{\cex}\subset\bar{\Pi}^{N,T}$ as
  \begin{align}
    \Pi^{N,T}_{\cex}:=\bigg\{\pi_{0:T-1}\in&\bar{\Pi}^{N,T}\setsep\pi_t(du^{1:N}_t|x^{1:N}_{0:t},u^{1:N}_{0:t-1},\mu^{1:M}_t)=\notag\\
    &\pi_t(du^{\sigma(1):\sigma(N)}_t|x^{\sigma(1):\sigma(N)}_{0:t},u^{\sigma(1):\sigma(N)}_{0:t-1},\mu^{1:M}_t),\;\forall\:\sigma\in\mc{S}_{\mc{C}^N},\,0\leq t<T\bigg\}\label{eq67}.
  \end{align}
   Subsequently, the class of $N$-cluster-symmetric (C-Sym$^N$) and
   conditionally independent policies $\Pi^{N,T}_{\csym}\subset\Pi^{N,T}_{\cex}$
   is given by
  \begin{align}
    \Pi^{N,T}_{\csym}:=\bigg\{\pi_{0:T-1}\in&\bar{\Pi}^{N,T}\setsep\pi_t(du^{1:N}_t|x^{1:N}_{0:t},u^{1:N}_{0:t-1},\mu^{1:M}_t)=\notag\\
    &\prod_{j\in V}\prod_{i\in\mc{C}^N_j}\pi^j_t(du^i_t|x^{1:N}_t,u^{1:N}_t,\mu^{1:M}_t),\;0\leq t<T\bigg\}\label{eq68}
  \end{align}
  where each $\pi^j_t\in\mc{P}(\mbb{U}^j|\mbb{H}^N_t)$ in (\ref{eq68}) must
  itself be C-Ex$^N$. For $\mf{G}^N_\infty$ we write $\Pi^N_{\cex}$,
  $\Pi^N_{\csym}$, and for $\mf{G}^\infty$ we write $\Pi^T_{\cex}$,
  $\Pi^T_{\csym}$, $\Pi_{\cex}$, $\Pi_{\csym}$, all of which follow analogous
  definitions.
\end{definition}
Under cluster-exchangeable policies, all within-clusters reconfigurations of a
given joint action are equiprobable, and the controller is indifferent to which
agent (up to their cluster-membership) executes each component. In contrast,
cluster-symmetric policies impose further structure by requiring agents select
their actions independently, but under the same law within clusters.

\section{Optimality of C-Ex\texorpdfstring{$^N$}{TEXT} Policies for \texorpdfstring{$\mf{G}^N$}{TEXT}}\label{sec4}
To establish the optimality of $\Pi^{N,T}_{\cex}$ and $\Pi^{N}_{\cex}$ for
$\mf{G}^N$, we follow the example of \cite{Bauerle_2023} for symmetric ($M=1$)
teams to develop an equivalent measure-valued formulation of $\mf{G}^N$. The
main purpose of this reformulation is to facilitate optimization under the cCMF
IS and establish the optimality of C-Ex$^N$ policies for $\mf{G}^N$. Rather than
representing the entire joint state up to a $\mc{N}$-permutation using its
empirical measure, our (putative) MDP tracks an array of $M$ distinct empirical
measures $\{\mu^{1:M}_{t}\}_{t\geq 0}$, preserving the subpopulation structure
of $\mf{G}^N$. Optimality of C-Ex$^N$ policies then follows the invariance of
the empirical measures under cluster-permutations, along with their sufficiency
for computing the running costs/value function.

\subsection{Equivalent Empirical Measure MDP
\texorpdfstring{$\wh{\mf{G}}^N$}{TEXT}}\label{sec5} Fixing $N\geq M$, our
objective is to form a proper MDP from the
$\mc{P}^{N_1}_E(\mbb{X}^1)\times\mc{P}^{N_2}_E(\mbb{X}^2)\times\cdots\times\mc{P}^{N_M}_E(\mbb{X}^M)$-valued
process $\{\mu^{1:M}_t\}_{t\geq 0}$, denoting this state space $\wh{\mbb{X}}^N$.
The ambient action space is given by
$\wh{\mbb{U}}^N:=\mc{P}^{N_1}_E(\mbb{X}^1\times\mbb{U}^1)\times\mc{P}^{N_2}_E(\mbb{X}^2\times\mbb{U}^2)\times\cdots\times\mc{P}^{N_M}_E(\mbb{X}^M\times\mbb{U}^M)$,
with the constraint that for any state $\mu^{1:M}\in\wh{\mbb{X}}^N$, actions
must be selected from the subset
\begin{align}
  \wh{\mbb{U}}^N(\mu^{1:M}):=\left\{\mu^{1:M}[(x^{1:N},u^{1:N})]\in\wh{\mbb{U}}^N\;\big|\;u^{1:N}\in\eas,\;x^{1:N}\in\ess\;\;\text{and}\;\;\mu^{1:M}[x^{1:N}]=\mu^{1:M}\right\}\label{eq10}
\end{align}
such that the joint state-action empirical measure in entry $j$ of any
$\theta^{1:M}\in\wh{\mbb{U}}^N(\mu^{1:M})$ has marginals on $\mbb{X}^j$ in
agreement with $\mu^j$. When dealing with $N\geq M$ fixed, we will simply write
$\wh{\mbb{X}}$, $\wh{\mbb{U}}$,
$\{\wh{\mbb{U}}(\mu^{1:M})\setsep\mu^{1:M}\in\wh{\mbb{X}}\}$. Henceforth, any
such product space is viewed as a subset of the joint probability measures by
defining (in the case of $\wh{\mbb{X}}$)
\begin{align}
  \mu^{1:M}(A):=\int_A\prod_{j\in V}\mu^j(dx^j),\quad\forall\:A\in\mc{B}(\mbb{X}^{1:M}).\label{eq129}
\end{align}
Further, note that endowing $\wh{\mbb{X}}$ and $\wh{\mbb{U}}$ with the topology
of weak convergence renders the empirical measure function $\mu^{1:M}[\cdot]$
weakly continuous irrespective of the subjacent space.\\[5pt]
\indent Admissible policies for the measure-valued process are those allowed to
depend on the (measure-valued) state-action profile given by
\begin{align}
  \wh{\mc{I}}_t:=\{\mu^{1:M}_{0:t},\theta^{1:M}_{0:t-1}\},\quad t\geq 0\label{eq16}
\end{align}
where each $\theta^{1:M}_t\in\wh{\mbb{U}}$, and whence we may define policies in
the same manner as the original team problem. For history spaces
$\wh{\mbb{H}}_0:=\wh{\mbb{X}}$,
$\wh{\mbb{H}}_t:=\wh{\mbb{H}}_{t-1}\times(\wh{\mbb{X}}\times\wh{\mbb{U}})$ for
$t\geq 1$, an admissible policy at time $t\geq 0$ is a stochastic kernel
$\hat{\pi}_t\in\mc{P}(\wh{\mbb{U}}|\wh{\mbb{H}}_t)$ complying with the action
constraint in (\ref{eq10}) such that
$\hat{\pi}_t(\wh{\mbb{U}}(\mu^{1:M})|\mu^{1:M})=1$ for all
$\mu^{1:M}\in\wh{\mbb{X}}$. The set of these shall be denoted $\wh{\Pi}^N_t$.
Policies over a finite horizon $T\geq 1$ are then taken from the Cartesian
product $\wh{\Pi}^{N,T}:=\prod_{t=0}^{T-1}\wh{\Pi}^N_t$, and over an infinite
horizon from $\wh{\Pi}^N:=\prod_{t\geq 0}\wh{\Pi}^N_t$. Just as for the ensemble
team problem, sets of deterministic policies are denoted by replacing
`$\wh{\Pi}$' with `$\wh{\Gamma}$' in the preceding notations.\\[5pt]
\indent Our first result is analogous to \cite[lemma 3.1]{Bauerle_2023}, and
establishes that the local empirical measures
$\mu^{1:M}[x^{1:N}]\in\wh{\mbb{X}}$ with a resulting action
$\theta^{1:M}\in\wh{\mbb{U}}(\mu^{1:M}[x^{1:N}])$ can form an equivalent
representation of $x^{1:N}\in\ess$ and $u^{1:N}\in\eas$ for the purpose of
computing the stagewise costs for $\mf{G}^N$.
\begin{lemma}\label{lem1} Let $x^{1:N}\in\ess$ and $u^{1:N}\in\eas$. Then for
the state $\mu^{1:M}=\mu^{1:M}[x^{1:N}]$, there exists an action
$\theta^{1:M}\in\wh{\mbb{U}}(\mu^{1:M}[x^{1:N}])$ such that
  \begin{align*}
    c^N(x^{1:N},u^{1:N})=\sum_{j\in V}\int_{\mbb{X}^j\times\mbb{U}^j}c^j(x,u,\mu^{1:M})\theta^j(dx\times du)=:\wh{c}(\mu^{1:M},\theta^{1:M}).
  \end{align*}
  Conversely, if $\theta^{1:M}\in\wh{\mbb{U}}(\mu^{1:M}[x^{1:N}])$ for some
  $x^{1:N}$, then $\exists\, u^{1:N}\in\eas$ such that
  $c^N(x^{1:N},u^{1:N})=\wh{c}(\mu^{1:M}[x^{1:N}],\theta^{1:M})$.
\end{lemma}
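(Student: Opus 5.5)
The plan is to prove both directions by a direct computation with the cluster empirical measures, using only the definition of $\wh{\mbb{U}}(\mu^{1:M})$ in (\ref{eq10}) and the form (\ref{eq7}) of the stagewise cost. The key identity is that for any $x^{1:N}\in\ess$, $u^{1:N}\in\eas$ and $j\in V$, integrating against the empirical measure $\theta^j=\mu[(x^{\mc{C}_j},u^{\mc{C}_j})]=\frac{1}{N_j}\sum_{i\in\mc{C}_j}\delta_{(x^i,u^i)}$ gives
\[
\int_{\mbb{X}^j\times\mbb{U}^j}c^j(x,u,\mu^{1:M})\,\theta^j(dx\times du)=\frac{1}{N_j}\sum_{i\in\mc{C}_j}c^j(x^i,u^i,\mu^{1:M}),
\]
where $\mu^{1:M}=\mu^{1:M}[x^{1:N}]$ is held fixed inside the integral. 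This holds because integration against a Dirac mass is evaluation. Summing over $j\in V$ and comparing with (\ref{eq7}) yields $c^N(x^{1:N},u^{1:N})=\wh{c}(\mu^{1:M},\theta^{1:M})$.

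For the forward direction, given $x^{1:N},u^{1:N}$, I will set $\theta^{1:M}:=\mu^{1:M}[(x^{1:N},u^{1:N})]$. This action is admissible, i.e. $\theta^{1:M}\in\wh{\mbb{U}}(\mu^{1:M}[x^{1:N}])$, directly by (\ref{eq10}), taking the same $x^{1:N}$ as the witness. The identity above then gives the claim.

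For the converse, suppose $\theta^{1:M}\in\wh{\mbb{U}}(\mu^{1:M}[x^{1:N}])$. By (\ref{eq10}) there is a witness pair $(y^{1:N},v^{1:N})$ with $\mu^{1:M}[y^{1:N}]=\mu^{1:M}[x^{1:N}]$ and $\theta^{1:M}=\mu^{1:M}[(y^{1:N},v^{1:N})]$. The issue is that the witness $y^{1:N}$ need not equal $x^{1:N}$; it only agrees with it up to cluster-wise reordering. This relabeling step is where the only real care is needed, and it has two parts.

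\begin{itemize}
\item Within each cluster, $\mu[y^{\mc{C}_j}]=\mu[x^{\mc{C}_j}]$ means the two $N_j$-tuples are the same multiset of points. Both measures are uniform averages of $N_j$ Dirac masses, so atom multiplicities coincide. Hence there is a bijection $\sigma_j$ of $\mc{C}_j$ with $y^{\sigma_j(i)}=x^i$ for all $i\in\mc{C}_j$.
\item Gluing these bijections gives a cluster-permutation $\sigma\in\mc{S}_{\mc{C}^N}$. I will then define $u^i:=v^{\sigma(i)}$. Since the empirical measure of pairs is invariant under simultaneous permutation of indices within clusters, $\mu^{1:M}[(x^{1:N},u^{1:N})]=\mu^{1:M}[(y^{\sigma(1):\sigma(N)},v^{\sigma(1):\sigma(N)})]=\theta^{1:M}$.
\end{itemize}

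Applying the identity to $(x^{1:N},u^{1:N})$ then finishes the proof. The state argument $\mu^{1:M}$ is unchanged throughout, since it is cluster-permutation invariant.
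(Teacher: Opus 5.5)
Your proof is correct and follows the same argument as the paper. You take $\theta^{1:M}=\mu^{1:M}[(x^{1:N},u^{1:N})]$ and evaluate the integrals against the cluster empirical measures, then reverse the computation for the converse. Your explicit cluster permutation $\sigma$ with $u^i:=v^{\sigma(i)}$ fills in the relabeling step that the paper skips when it simply asserts that $\theta^{1:M}$ is of the form (\ref{eq13}) for the given $x^{1:N}$.
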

\noindent{\bf Proof}. Fixing $x^{1:N}\in\ess$ and $u^{1:N}\in\eas$, simply take
\begin{align}
  \theta^{1:M}=\mu^{1:M}[(x^{1:N},u^{1:N})]\label{eq13}
\end{align}
such that $\theta^{1:M}\in\wh{\mbb{U}}(\mu^{1:M}[x^{1:N}])$ by definition. Then
\begin{align*}
  c^N(x^{1:N},u^{1:N})&=\sum_{j\in V}\frac{1}{N_j}\sum_{i\in\mc{C}_j}c^j(x^i,u^i,\mu^{1:M}[x^{1:N}])\\
  &=\sum_{j\in V}\int_{\mbb{X}^j\times\mbb{U}^j}c^j(x,u,\mu^{1:M})\theta^j(dx\times du)
  =\wh{c}(\mu^{1:M},\theta^{1:M})
\end{align*}
as claimed. For the converse, take
$\theta^{1:M}\in\wh{\mbb{U}}(\mu^{1:M}[x^{1:N}])$ for some $x^{1:N}\in\ess$
fixed, and observe that by (\ref{eq10}), $\theta^{1:M}$ is of the form
(\ref{eq13}). Then we repeat the same computation as above in
reverse.\hfill{$\qed$}\\[5pt]
\indent Lemma \ref{lem1} also furnishes a stagewise cost function
$\wh{c}:\mc{P}(\mbb{X}^{1:M})\times\mc{P}(\mbb{X}^{1:M}\times\mbb{U}^{1:M})\rightarrow\mbb{R}_+$
for our putative MDP which is consistent with $\mf{G}^N$ and does not depend on
the population size $N\geq M$. In order to ensure dynamical consistency with the
underlying ensemble, the transitions will be expressed as a lifting of the
original dynamics: if $\mu^{1:M}_t=\mu^{1:M}[x^{1:N}_t]$ and
$\theta^{1:M}_t\in\wh{\mbb{U}}(\mu^{1:M}_t)$ corresponds to a joint action
$u^{1:N}_t\in\eas$ for some $t\geq 0$ in the ensemble (conceptually understood
as `ground truth') then we define
$\wh{F}^N:\wh{\mbb{X}}\times\wh{\mbb{U}}\times\mbb{W}^{1:N}\rightarrow\wh{\mbb{X}}$
so that
\begin{align}
  \mu^{1:M}_{t+1}=\wh{F}^N(\mu^{1:M}_t,\theta^{1:M}_t,w^{1:N}_t):=\mu^{1:M}[F^N(\hat{x}^{1:N}_t,\hat{u}^{1:N}_t,\mu^{1:M}_t,w^{1:N}_t)],\quad t\geq 0\label{eq15}
\end{align}
where $(\hat{x}^{1:N}_t,\hat{u}^{1:N}_t)$ represents the true ensemble
$(x^{1:N}_t,u^{1:N}_t)$ up to empirical equivalence within each cluster -- that
is, $\exists\,\sigma\in\mc{S}_{\mc{C}^N}$ so that
$(\hat{x}^{\sigma(1):\sigma(N)}_t,
\hat{u}^{\sigma(1):\sigma(N)}_t)=(x^{1:N}_t,u^{1:N}_t)$. That such a
representative can be found given $\theta^{1:M}_t$ corresponding to
$(x^{1:N}_t,u^{1:N}_t)$ is evident from its definition; it tells us precisely
which state-action pairs reside within each cluster, but not the identity of the
agent to which each belongs. In fact, it is not difficult to see that equality
up to empirical distribution and equality up to a cluster permutation induce the
same equivalence relation on $\ess\times\eas$, which we refer to as empirical
equivalence. Dynamics (\ref{eq15}) then give rise to the time-invariant
Markovian kernel
\begin{align}
  &\wh{\mc{T}}^N(d\mu^{1:M}_{t+1}|\mu^{1:M}_t,\theta^{1:M}_t)=\prod_{j=1}^M\wh{\mc{T}}_j^N(d\mu^j_{t+1}|\mu^{1:M}_t,\theta^j_t).\label{eq134}
\end{align}
Importantly, the recursions in (\ref{eq15}) (along with a suitable
initialization -- see below) induce a distributional consistency between the
measure-valued and ensemble MDPs. To illustrate, let $A\in\mc{B}(\wh{\mbb{X}})$,
$B:=(\mu^{1:M})^{-1}(A)$, $\mu^{1:N}_t\in\wh{\mbb{X}}$ and
$\theta^{1:M}_t\in\wh{\mbb{U}}(\mu^{1:M}_t)$ correspond to some
$(x^{1:N}_t,u^{1:N}_t)\in\ess\times\eas$. Then,
\begin{align*}
  \wh{\mc{T}}^N(A|\mu^{1:M}_t,\theta^{1:M}_t)&=P(x^{\sigma(1):\sigma(N)}_{t+1}\in B|x^{\sigma(1):\sigma(N)}_t,u^{\sigma(1):\sigma(N)}_t, \mu^{1:M}_t)\tag{for some $\sigma\in\mc{S}_\mc{G}$}\\
  &=\int_{B}\prod_{j\in V}\prod_{i\in\mc{C}_j}\mc{T}_j(dx^{\sigma(i)}|x^{\sigma(i)}_t,u^{\sigma(i)}_t,\mu^{1:M}_t)\\
  &=\int_{B}\prod_{j\in V}\prod_{i\in\mc{C}_j}\mc{T}_j(dx^i|x^{i}_t,u^{i}_t,\mu^{1:M}_t)\\
  &=\mc{T}^N(B|x^{1:N}_t,u^{1:N}_t, \mu^{1:M}_t)
\end{align*}
where, in particular, no matter which representative
$(\hat{x}^{1:N}_t,\hat{u}^{1:N}_t)$ we glean from
$(\mu^{1:M}_t,\theta^{1:M}_t)$, $\hat{x}^{1:N}_{t+1}$ has the same conditional
distribution as $x^{1:N}_{t+1}$ (as do $\mu^{1:M}_{t+1}$ and
$\mu^{1:M}[x^{1:N}_{t+1}]$) and consequently the transition to $\mu^{1:M}_{t+1}$
is insensitive to this choice. \\[5pt]
\indent Given the discussion thus far, the finite and infinite horizon
discounted cost criteria for the measure-valued problem are
\begin{align*}
  &\wh{J}_T(\hat{\pi},\hat{\nu}_0):=\E^{\hat{\pi}}_{\hat{\nu}_0}\left[\sum_{t=0}^{T-1}\beta^t\wh{c}(\mu^{1:M}_t,\theta^{1:M}_t)\right],\quad\hat{\pi}\in\wh{\Pi}^{N,T}\\
  &\qquad\text{and}\quad\wh{J}(\hat{\pi},\hat{\nu}_0):=\E^{\hat{\pi}}_{\hat{\nu}_0}\left[\sum_{t=0}^\infty\beta^t\wh{c}(\mu^{1:M}_t,\theta^{1:M}_t)\right],\quad\hat{\pi}\in\wh{\Pi}^{N}
\end{align*}
respectively, where $\hat{\nu}_0=\nu_0\circ(\mu^{1:M})^{-1}$ is the pushforward
of $\nu_0$ under $\mu^{1:M}:\ess\rightarrow\wh{\mbb{X}}$, and $\wh{c}$
is as described in Lemma \ref{lem1}. Our objective is to obtain a policy
$\hat{\pi}^\ast$ which is optimal in the sense that
\begin{align}
  \wh{J}_T^\ast(\hat{\nu}_0):=\wh{J}_T(\hat{\pi}^\ast,\hat{\nu}_0)=\inf_{\pi\in\wh{\Pi}^{N,T}}\wh{J}_T(\pi,\hat{\nu}_0)\quad\text{and}\quad \wh{J}^\ast(\hat{\nu}_0):=\wh{J}(\hat{\pi}^\ast,\hat{\nu})=\inf_{\pi\in\wh{\Pi}^N}\wh{J}(\pi,\hat{\nu}_0)
\end{align}
with the provisio that $\hat{\pi}^\ast\in\wh{\Pi}^{N,T}$ in the former case and
$\hat{\pi}^\ast\in\wh{\Pi}^N$ in the latter. From the discussion thus far, we
conclude that
\begin{align}
  \left(\wh{\mbb{X}}^N,\wh{\mbb{U}}^N,\big\{\wh{\mbb{U}}^N(\mu^{1:M}):\mu^{1:M}\in\wh{\mbb{X}}\big\},\wh{\mc{T}}^N,\wh{c}, \hat{\nu}_0\right)
\end{align}
constitutes an MDP, which we shall denote with $\wh{\mf{G}}^N_T$ in the
finite horizon case, $\wh{\mf{G}}^N_\infty$ for the infinite horizon, or
$\wh{\mf{G}}^N$ when referring to either interchangeably.

\subsection{Optimality of the cCMF IS, and C-Ex\texorpdfstring{$^N$}{TEXT} Policies for \texorpdfstring{$\mf{G}^N$}{TEXT}}\label{sec6}
According to the above construction, dynamic programming recursions for
$\wh{\mf{G}}^N_T$ are given by
\begin{align}
  &\wh{J}^N_{T-1,T}(\mu^{1:M}):=\inf_{\theta^{1:M}\in\wh{\mbb{U}}(\mu^{1:M})}\left\{\wh{c}(\mu^{1:M},\theta^{1:M})\right\}\label{eq25}\\
  &\wh{J}^N_{t,T}(\mu^{1:M}):=\inf_{\theta^{1:M}\in\wh{\mbb{U}}(\mu^{1:M})}\left\{\wh{c}(\mu^{1:M},\theta^{1:M})+\beta\int_{\wh{\mbb{X}}}\wh{J}^N_{t+1,T}(\mu)\wh{\mc{T}}^N(d\mu|\mu^{1:M},\theta^{1:M})\right\}\label{eq26}
\end{align}
where $0\leq t\leq T-2$ and $\mu^{1:M}\in\wh{\mbb{X}}$. For the well-definedness
of recursions (\ref{eq25}-\ref{eq26}) for all $T\geq 1$ (and thus the existence
of an optimal policy for $\wh{\mf{G}}^N_T$) see appendix A Theorem \ref{thm2}
(which establishes measurable selection conditions via weak continuity of
$\wh{\mc{T}}^N$ under assumption \ref{ass1}). Then, by the monotone convergence
theorem and standard contraction argument for the operator realizing these
recursions, the limit
\begin{align}
  \wh{J}^N_\infty(\mu^{1:M}):=\lim_{T\rightarrow\infty}\wh{J}^N_{t,T}(\mu^{1:M})
\end{align}
exists and satisfies the discounted cost optimality equation 
\begin{align}
  \wh{J}^N_{\infty}(\mu^{1:M})=\inf_{\theta^{1:M}\in\wh{\mbb{U}}(\mu^{1:M})}\left\{\wh{c}(\mu^{1:M},\theta^{1:M})+\beta\int_{\wh{\mbb{X}}}\wh{J}^N_\infty(\mu)\wh{\mc{T}}^N(d\mu|\mu^{1:M},\theta^{1:M})\right\}\label{eq46}
\end{align}
which, again by the satisfaction of measurable selection conditions, ensures the
existence of an optimal stationary policy $\hat{\gamma}\in\wh{\Gamma}^N$ for
$\wh{\mf{G}}^N_\infty$ (for a detailed exposition on these classical arguments,
see \cite[$\S$ 4.2]{Hernandez-Lerma_Lasserre_1996}).\\[5pt]
\indent We are finally ready to establish the first of two main results for this
section: that $\wh{\mf{G}}^N$ is equivalent to $\mf{G}^N$ by viewing the cluster
empirical distributions as a sufficient statistic for the underlying ensemble
activity. The arguments used here follow \cite[theorem 3.3]{Bauerle_2023} and
the supporting discussion therein, but for clarity make more explicit invocation
of Blackwell's classical theorem on irrelevant information for MDPs
\cite{Blackwell_1964}.
\begin{theorem}[Equivalence of $\wh{\mf{G}}^N$ and $\mf{G}^N$]\label{thm5}
  Suppose assumption \ref{ass1} holds. Then without loss of optimality, policies
  for $\mf{G}^N$ may be restricted to those which are Markov and depend on the
  cCMF IS given by
  \begin{align*}
    \ccmfis=\{\mu^{1:M}[x^{1:N}_t]\},\quad t\geq 0.
  \end{align*} 
  Moreover, both $\wh{J}^N_{t,T}(\mu^{1:M}[x^{1:N}])=J^N_{t,T}(x^{1:N})$ and
  $\wh{J}^N_\infty(\mu^{1:M}[x^{1:N}])=J^N_\infty(x^{1:N})$ for all
  $x^{1:N}\in\ess$.
\end{theorem}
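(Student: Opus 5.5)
The plan is to prove the two value-function identities first, by backward induction on the Bellman recursions (\ref{eq95})--(\ref{eq96}) and (\ref{eq25})--(\ref{eq26}), and then obtain the information-structure reduction from them. The inductive claim is that $J^N_{t,T}(x^{1:N})=\wh{J}^N_{t,T}(\mu^{1:M}[x^{1:N}])$ for all $x^{1:N}\in\ess$ and $0\leq t\leq T-1$.

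For the base case $t=T-1$, Lemma \ref{lem1} gives two inclusions. Each $u^{1:N}\in\eas$ yields $\theta^{1:M}=\mu^{1:M}[(x^{1:N},u^{1:N})]\in\wh{\mbb{U}}(\mu^{1:M}[x^{1:N}])$ with equal stage cost. Conversely, each admissible $\theta^{1:M}$ arises this way from some $u^{1:N}$. This second direction needs a small check. The definition (\ref{eq10}) only guarantees $\theta^{1:M}=\mu^{1:M}[(\tilde x^{1:N},u^{1:N})]$ for some $\tilde x^{1:N}$ that is empirically equivalent to $x^{1:N}$. We therefore permute $u^{1:N}$ by the cluster permutation $\sigma$ carrying $\tilde x^{1:N}$ to $x^{1:N}$, and use that $\mu^{1:M}[\cdot]$ is invariant under $\mc{S}_{\mc{C}^N}$. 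With this, the two infima coincide.

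For the inductive step, assume $J^N_{t+1,T}=\wh{J}^N_{t+1,T}\circ\mu^{1:M}$. The distributional-consistency computation following (\ref{eq134}) shows that the pushforward of $\mc{T}^N(\cdot|x^{1:N},u^{1:N})$ under $\mu^{1:M}[\cdot]$ equals $\wh{\mc{T}}^N(\cdot|\mu^{1:M}[x^{1:N}],\theta^{1:M})$ whenever $\theta^{1:M}=\mu^{1:M}[(x^{1:N},u^{1:N})]$. Hence
\[\int_{\ess}J^N_{t+1,T}\,d\mc{T}^N(\cdot|x^{1:N},u^{1:N})=\int_{\wh{\mbb{X}}}\wh{J}^N_{t+1,T}\,d\wh{\mc{T}}^N(\cdot|\mu^{1:M}[x^{1:N}],\theta^{1:M}).\]
Combined with the cost identity from Lemma \ref{lem1} and the same two-sided correspondence of actions, the bracketed terms in (\ref{eq96}) and (\ref{eq26}) match pointwise along corresponding actions, so the infima agree. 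The infinite-horizon statement then follows by letting $T\to\infty$, using (\ref{eq97}) and its measure-valued analogue; both limits exist by the monotone convergence and contraction arguments already cited.

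For the information-structure claim, take a measurable optimal selector $\hat\gamma_t$ for (\ref{eq25})--(\ref{eq26}), which exists by Theorem \ref{thm2}, or a stationary selector $\hat\gamma$ for (\ref{eq46}). We lift it to the ensemble: given $x^{1:N}$ and $\theta^{1:M}=\hat\gamma_t(\mu^{1:M}[x^{1:N}])$, choose $u^{1:N}$ with $\mu^{1:M}[(x^{1:N},u^{1:N})]=\theta^{1:M}$. The lifted policy attains $J^N_{t,T}(x^{1:N})$ by the equalities above, so it is optimal for $\mf{G}^N$. It depends on $x^{1:N}$ only through the cCMF statistic $\mu^{1:M}[x^{1:N}]$ together with the within-cluster assignment of actions. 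One can remove the residual dependence by randomizing uniformly over cluster permutations of the assigned actions; this makes the induced distribution a function of $\mu^{1:M}_t$ alone up to relabeling, without changing cost, since cost and kernel are C-Ex$^N$. Alternatively, one can invoke Blackwell's irrelevant-information theorem \cite{Blackwell_1964} on $\wh{\mf{G}}^N$: $\mu^{1:M}_t$ is a controlled Markov state with cost $\wh c$, so past history and the finer $x^{1:N}_t$ are irrelevant.

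The main obstacle is making the lifting $\theta^{1:M}\mapsto u^{1:N}$ measurable in $(x^{1:N},\theta^{1:M})$. I would handle it with a measurable selection on the closed set $\{(x,\theta,u):\mu^{1:M}[(x,u)]=\theta\}$. This set is closed because $\mu^{1:M}[\cdot]$ is weakly continuous, and the spaces are compact, so a Kuratowski--Ryll-Nardzewski-type selector applies. A secondary issue is checking that this set has nonempty sections, which is exactly the permutation argument from the base case.
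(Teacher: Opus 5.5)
Your proof is correct, and its value-function half is the paper's argument: backward induction using both inclusions of Lemma \ref{lem1} and the pushforward identity for $\mc{T}^N$ under $\mu^{1:M}[\cdot]$, then $T\to\infty$. Your permutation check for the converse inclusion fills in a step the paper only asserts.

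The routes differ in the information-structure reduction. The paper starts from an arbitrary deterministic Markov ensemble policy and rewrites it as a map $\hat\gamma_t(x^{1:N},\mu^{1:M})$ into $\wh{\mbb{U}}^N$. It then invokes Blackwell's irrelevant-information theorem stage by stage to replace this map by some $\hat\gamma^\ast_t:\mss\to\mas$. You observe that once $J^N_{t,T}=\wh{J}^N_{t,T}\circ\mu^{1:M}$ is established, Blackwell is not needed. An optimal selector for (\ref{eq25})--(\ref{eq26}) or (\ref{eq46}) already depends only on $\mu^{1:M}_t$. A Borel lift $u^{1:N}=\phi(x^{1:N},\theta^{1:M})$ from the closed-graph selection then attains the ensemble Bellman equation, so it is optimal by standard verification. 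Your route is more self-contained. It also makes explicit something the paper leaves implicit: how a $\mu$-measurable decision becomes measurable ensemble actions.

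One correction to your randomization remark: the dependence on $x^{1:N}_t$ through the assignment cannot be removed. The theorem has to be read at the level of $\theta^{1:M}_t$, as the paper implicitly does with $\hat\gamma^\ast_t$. For example, take $M=1$, $N=2$, $\mbb{X}=\mbb{U}=\{0,1\}$, $T=1$, and $\nu_0$ uniform, with cost $0$ if $u=x$ and $1$ otherwise. Any ensemble policy depending on $\mu[x^{1:2}_0]$ alone has expected cost at least $1/4$, whereas $u^i=x^i$ costs $0$. Randomizing uniformly over the assignments consistent with $\theta^{1:M}_t$ is cost-neutral and makes the lift C-Ex$^N$, but it is unnecessary, since your deterministic lift already gives $\theta^{1:M}_t=\hat\gamma_t(\mu^{1:M}_t)$. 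Permuting the actions alone while holding the states fixed would change $\theta^{1:M}_t$ and hence the cost.
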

\noindent{\bf Proof}. Fixing a deterministic Markovian policy
$\gamma=\gamma_{0:T-1}\in\bar{\Gamma}^{N,T}$ (which is itself without loss
thanks to \cite{Blackwell_1964}), lemma \ref{lem1} certifies that
$\mu^{1:M}[(x^{1:N}_t,u^{1:N}_t)]\in\wh{\mbb{U}}(\mu^{1:M}[x^{1:N}_t])$ (setting
$u^{1:N}_t=\gamma^{1:N}_t(x^{1:N}_t)$ for $0\leq t\leq T-1$) satisfies
\begin{align}
  c({x^{1:N}_t,u^{1:N}_t})=\wh{c}(\mu^{1:M}[x^{1:N}_t],\mu^{1:M}[(x^{1:N}_t,u^{1:N}_t)]),\quad 0\leq t\leq T-1.\label{eq35}
\end{align}
Any such measurable mapping $(x^{1:N}_t,\mu^{1:M}[x^{1:N}_t])\mapsto
\mu^{1:M}[(x^{1:N}_t,\gamma^{1:N}_t(x^{1:N}_t))]$ can be expressed with a policy
$\hat{\gamma}_t:\bar{\mbb{X}}^N\times\wh{\mbb{X}}^N\rightarrow\wh{\mbb{U}}^N$,
which definitionally satisfies the action constraint of $\wh{\mf{G}}^N_T$.
Then Blackwell's theorem certifies that for any
$P_{T-1}\in\mc{P}(\ess\times\mss)$ with marginal $Q_{T-1}$ on $\mss$, there
exists another policy $\hat{\gamma}^\ast_{T-1}:\mss\rightarrow\mas$ such that
\begin{align}
  \int_{\ess\times\wh{\mbb{X}}}\wh{c}(\mu^{1:M},&\hat{\gamma}_{T-1}(x^{1:N},\mu^{1:M}))P_{T-1}(dx^{1:N}\times d\mu^{1:M})\notag\\
  &\quad\qquad\geq \int_{\wh{\mbb{X}}}\wh{c}(\mu^{1:M},\hat{\gamma}^\ast_{T-1}(\mu^{1:M}))Q_{T-1}(d\mu^{1:M})\label{eq99}
\end{align}
so in fact, we may (without loss) replace the policy $\hat{\gamma}_{T-1}$
realizing the cost in (\ref{eq35}) at $T-1$ with $\hat{\gamma}^\ast_{T-1}$,
which only relies on the ensemble up to the empirical distribution of each
cluster. Moreover,
\begin{align}
  J^N_{T-1,T}(x^{1:N}_{T-1})&=\inf_{u^{1:N}\in\eas}\left\{c^N(x^{1:N}_{T-1},u^{1:N})\right\}\notag\\
  &=\inf_{\theta^{1:M}\in\wh{\mbb{U}}(\mu^{1:M}[x^{1:N}_{T-1}])}\left\{\wh{c}(\mu^{1:M}[x^{1:N}_{T-1}],\theta^{1:M})\right\}=\wh{J}^N_{T-1,T}(\mu^{1:M}[x^{1:N}])\label{eq36}
\end{align}
since (by lemma \ref{lem1}) the infima are taken over the same sets. Proceeding
backwards, (and repeating the arguments used in (\ref{eq35}-\ref{eq36})) observe
that
\begin{align}
  &c^N(x^{1:N}_{T-2},u^{1:N}_{T-2})+\beta\int_{\ess}J^N_{T-1,T}(x)\mc{T}^N(dx|x^{1:N}_{T-2},u^{1:N}_{T-2})\notag\\
  &=\wh{c}(\mu[x^{1:N}_{T-2}],\hat{\gamma}_{T-2}(x^{1:N}_{T-2},\mu^{1:M}[x^{1:N}_{T-2}]))\cdots\notag\\
  &\qquad\qquad\qquad\qquad\cdots+\beta\int_{\wh{\mbb{X}}}\wh{J}^N_{T-1,T}(\mu)\wh{\mc{T}}^N(d\mu|\mu^{1:M}[x^{1:N}_{T-2}],\hat{\gamma}_{T-2}(x^{1:N}_{T-2},\mu^{1:M}[x^{1:M}_{T-2}])).\label{eq37}
\end{align}
Invoking Blackwell's theorem \cite{Blackwell_1964}, we may once more replace $\hat{\gamma}_{T-2}$ with
a new policy $\hat{\gamma}_{T-2}^\ast:\mss\rightarrow\mas$ depending only on the
ensemble $x^{1:N}_{T-2}$ up to cluster empirical equivalence. Furthermore,
\begin{align*}
  J^N_{T-2,T}(x^{1:N}_{T-2})&=\wh{J}^N_{T-2,T}(\mu^{1:M}[x^{1:N}_{T-2}]).
\end{align*}
Thus, by backward induction on $\{0,1,2,\dots,T-1\}$, we get
$J^N_{t,T}(x^{1:N}_t)=\wh{J}^N_{t,T}(\mu^{1:M}[x^{1:N}_t])$ for $0\leq t\leq
T-1$, where each minimum is achieved by a deterministic Markov policy
$\hat{\gamma}^\ast_t:\mss\rightarrow\mas$ without loss.\\[5pt]
\indent For $\mf{G}^N_\infty$, observe that the above arguments hold for any
$T\geq 1$ so by a fixed point argument with respect to the Bellman operator we
immediately obtain
\begin{align}
  \wh{J}^N_\infty(\mu^{1:M}[x^{1:N}])=\lim_{T\rightarrow\infty}\wh{J}^N_{t,T}(\mu^{1:M}[x^{1:N}])=\lim_{T\rightarrow\infty}J^N_{t,T}(x^{1:N})=J^N_\infty(x^{1:N}),\quad \forall\:x^{1:N}\in\ess
\end{align}
for any $t\geq 0$ fixed. Then, by the same arguments as used in (\ref{eq37}), we
can show that a policy satisfying the discounted cost optimality equation for
$\mf{G}^N_\infty$ depends only on the cluster empirical distributions, and thus
gives rise to an optimal stationary policy utilizing only the cCMF IS.
\hfill{$\qed$}\\[5pt]
\indent Theorem \ref{thm5} shows that we can view $\wh{\mf{G}}^N$ as a
sufficient representation of $\mf{G}^N$, which immediately yields the optimality
of the cCMF IS for $\mf{G}^N$. While interesting in its own right, the
significance of Theorem \ref{thm5} to our infinite population analysis is solely
mediated through the following result, which establishes the optimality of
C-Ex$^N$ policies for $\mf{G}^N$.
\begin{theorem}[Optimality of Cluster-Exchangeable Policies for
$\mf{G}^N$]\label{thm6} Let assumption \ref{ass1} hold. Then for any $N\geq M$
and $\pi=\pi_{0:T-1}\in\bar{\Pi}^{N,T}$ ($\pi\in\bar{\Pi}^{N}$) there exists a
policy $\hat{\pi}=\hat{\pi}_{0:T-1}\in\Pi^{N,T}_{\cex}$ for $\mf{G}^N_T$
($\pi^\ast=\pi^{\ast}_{0:\infty}\in\Pi^{N}_{\cex}$ for $\mf{G}^N_\infty$)
such that $J^N_T(\pi,\nu_0)=J^N_T(\hat{\pi},\nu_0)$
($J^N(\pi,\nu_0)=J^N(\hat{\pi},\nu_0)$).
\end{theorem}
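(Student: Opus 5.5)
The plan is a symmetrization argument: average $\pi$ over the finite group $\mc{S}_{\mc{C}^N}$ of cluster-permutations. Since the cost must be matched exactly rather than only bounded, I will not go through the optimal Markov cCMF policy of Theorem \ref{thm5}; that theorem only shows that some C-Ex$^N$ policy is optimal.

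\emph{Conjugated policies.} For $\sigma\in\mc{S}_{\mc{C}^N}$, write $\sigma$ also for its action on points, $x^{1:N}\mapsto x^{\sigma(1):\sigma(N)}$, and coordinatewise on histories $h_t\in\mbb{H}^N_t$. Define the conjugated policy
\[\pi^\sigma_t(du^{1:N}\mid h_t):=\pi_t\big(\sigma^{-1}(du^{1:N})\mid \sigma h_t\big),\qquad 0\leq t<T,\]
which plays $\pi$ on the relabelled history and then relabels the action back.
\begin{itemize}
\item[(a)] $\nu_0$ is C-Ex$^N$, because initial states are i.i.d.\ within each cluster.
\item[(b)] $\mc{T}^N$ and $c^N$ are invariant under cluster-permutations (the invariances listed after Definition \ref{def1}).
\end{itemize}
By induction on $t$, using (a) for the initial step and (b) for each transition, the strategic measure $P^{\pi^\sigma}_{\nu_0}$ is the pushforward of $P^{\pi}_{\nu_0}$ under the trajectory map $(x_{0:T-1},u_{0:T-1})\mapsto(\sigma x_{0:T-1},\sigma u_{0:T-1})$, up to inverse conventions. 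Since $c^N$ is invariant, it follows that $J^N_T(\pi^\sigma,\nu_0)=J^N_T(\pi,\nu_0)$ for every $\sigma$.

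\emph{Averaging.} Let $\bar P:=\frac{1}{|\mc{S}_{\mc{C}^N}|}\sum_\sigma P^{\pi^\sigma}_{\nu_0}$. The cost $J^N_T$ is linear in the strategic measure, so the cost under $\bar P$ equals $J^N_T(\pi,\nu_0)$. By construction, $\bar P$ is invariant under every cluster-permutation applied to the whole trajectory.

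\emph{Realizing $\bar P$ by a policy.} In this centralized, fully observed setting the set of strategic measures is convex. Concretely, $\bar P$ has initial marginal $\nu_0$, and its state transitions are still $\mc{T}^N$ because each summand has them. Disintegrating $\bar P$ therefore gives kernels $\tilde\pi_t(du_t\mid h_t)$ with $P^{\tilde\pi}_{\nu_0}=\bar P$.

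\emph{Choosing an equivariant version (main obstacle).} The disintegration is only defined $\bar P$-a.s., whereas Definition \ref{def5} asks for the identity to hold for every history. I will symmetrize the chosen version as
\[\hat\pi_t(du\mid h_t):=\frac{1}{|\mc{S}_{\mc{C}^N}|}\sum_\sigma \tilde\pi_t\big(\sigma^{-1}du\mid\sigma h_t\big).\]
This is exactly equivariant, as required in (\ref{eq67}), because the group is finite. It is still a version of the conditional distribution: each term $\tilde\pi_t(\sigma^{-1}du\mid\sigma h_t)$ is a version, since $\bar P$ is invariant under $\sigma$, and an average of versions is again a version. Hence $\hat\pi\in\Pi^{N,T}_{\cex}$ and $J^N_T(\hat\pi,\nu_0)=J^N_T(\pi,\nu_0)$. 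Measurability of $\hat\pi$ is immediate, since $\sigma$ acts by homeomorphisms.

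\emph{Infinite horizon.} For $\mf{G}^N_\infty$ the same construction applies on the infinite product trajectory space, with strategic measures defined via Ionescu-Tulcea, and each $\hat\pi_t$ is symmetrized as above. The cost equality follows from the finite-horizon equalities by monotone convergence, using $c^N\geq 0$ and $\beta<1$; alternatively, apply linearity directly to the infinite discounted sum, which is bounded.
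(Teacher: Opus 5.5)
Your argument is correct, and it takes a different route from the paper. The paper restricts to a Markov $\pi$ and averages the kernels pointwise, $\hat\pi_t(du^{1:N}|x^{1:N})=|\mc{S}_{\mc{C}^N}|^{-1}\sum_{\sigma}\pi_t(du^{\sigma(1):\sigma(N)}|x^{\sigma(1):\sigma(N)})$. It then runs a backward induction on cost-to-go functions to argue that $\hat\pi_t$ can be swapped for $\pi_t$ stage by stage. That would yield an explicit Markov C-Ex$^N$ policy, but uniform weights are not what the mixture of the conjugated strategic measures induces: given the current state or history, each conjugate enters with its posterior weight. As a result, the pointwise average can change the cost. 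Take $N=2$, $M=1$, $\mbb{X}^1=\mbb{U}^1=\{0,1\}$, $x^i_{t+1}=u^i_t$, $c^1(x,u,\mu)=\1{u\neq x}$, $\nu^1_0=\delta_0$, $T=2$, $\pi_0(\cdot|(0,0))=\delta_{(1,0)}$, and $\pi_1(\cdot|x)=\delta_{(1,0)}$ for all $x$. Then $J^2_2(\pi,\nu_0)=\tfrac12$. The pointwise average instead plays an action uniform on $\{(1,0),(0,1)\}$, independent of the state, at both stages, and costs $\tfrac12+\tfrac{\beta}{2}$. In the paper's computation, the change of variables actually sends the set $A$ to its permuted image, so the identities obtained hold only given $\mu^{1:M}[x^{1:N}_t]$, not given $x^{1:N}_t$ as the induction requires. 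Averaging strategic measures, as you do, avoids this. Linearity gives the cost, convexity of the set of strategic measures gives realizability, and your symmetrized disintegration gives equivariance for every history. History-dependent $\pi$ is also handled directly, without a prior reduction to Markov policies. The price is that your $\hat\pi$ may be history dependent. If you want a Markov C-Ex$^N$ policy, use $\bar P(u_t\in\cdot\,|\,x_t)$ in place of the full-history disintegration and symmetrize it the same way; it reproduces every $(x_t,u_t)$-marginal of $\bar P$, and hence the cost.
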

\noindent{\bf Proof}. We will consider $\mf{G}^N_T$ first. Let
$\pi=\pi_{0:T-1}\in\bar{\Pi}^{N,T}$ be a Markovian policy, and for $0\leq t<T$
define $J^{N,\pi}_{t,T}:\ess\rightarrow\mbb{R}_+$ to be the expected cost-to-go under
$\pi$ at timestep $t$. For $(x^{1:N},u^{1:N})\in\ess\times\eas$, define the
uniformized policy as follows:
\begin{align*}
  \hat{\pi}_t:=\frac{1}{|\mc{S}_{\mc{C}^N}|}\sum_{\sigma\in\mc{S}_{\mc{C}^N}}\pi^\sigma_t,\quad\text{where}\quad \pi^{\sigma}_t(du^{1:N}|x^{1:N})=\pi_t(du^{\sigma(1):\sigma(N)}|x^{\sigma(1):\sigma(N)}),\quad\forall\,\sigma\in\mc{S}_{\mc{C}^N}
\end{align*}
such that $\hat{\pi}:=\hat{\pi}_{0:T-1}\in\Pi^{N,T}_{\cex}$, and
$J^{N,\hat{\pi}}_{t,T}:\ess\rightarrow\mbb{R}_+$ are the associated expected cost-to-go
functions. For $0\leq t<T$, let $P^{\hat{\pi}}_{\nu_0,t}\in\mc{P}(\ess)$ be the
marginal law of $x^{1:N}_t$ under $\hat{\pi}$, and further define
$P_t(dx^{1:N}\times
du^{1:N}):=\pi_t(du^{1:N}|x^{1:N})P^{\hat{\pi}}_{\nu_0,t}(dx^{1:N})$, the
marginal law of $(x^{1:N}_t,u^{1:N}_t)$ for $0\leq t<T$ by replacing
$\hat{\pi}_t$ with $\pi_t$. From this, define the uniformized marginals
\begin{align}
  \wh{P}_t:=\frac{1}{|\mc{S}_{\mc{C}^N}|}\sum_{\sigma\in\mc{S}_{\mc{C}^N}}P^\sigma_t,\quad\text{where}\quad P^\sigma_t(dx^{1:N}\times du^{1:N}):=P_t(dx^{\sigma(1):\sigma(N)}\times du^{\sigma(1):\sigma(N)}),\quad\forall\,\sigma\in\mc{S}_{\mc{C}^N}\notag
\end{align} 
for each $0\leq t<T$. Because $P^{\hat{\pi}}_{\nu_0,t}$ is C-Ex$^N$ for $0\leq t<T$ (where the case of $t=0$ follows since $\nu_0$ itself is C-Ex$^N$) we have
\begin{align}
  \wh{P}_t(dx^{1:N}\times du^{1:N})=\bigg(\frac{1}{|\mc{S}_{\mc{C}^N}|}\sum_{\sigma\in\mc{S}_{\mc{C}^N}}\pi_t(du^{\sigma(1):\sigma(N)}|x^{\sigma(1):\sigma(N)})\bigg)P^{\hat{\pi}}_{\nu_0,t}(dx^{1:N}),\quad 0\leq t<T
\end{align}
such that $\wh{P}_t$ and $P_t$ share the same marginal on $\ess$ for all $0\leq t<T$. We proceed
inductively, showing that $\hat{\pi}_t$ can be replaced with $\pi_t$ at each
stage without altering the cost-to-go. Beginning at time $T-1$, let $A\in\mc{B}(\ess)$. Then
\begin{align}
  &\int_{A\times\eas}c^N(x^{1:N}_{T-1},u^{1:N}_{T-1})\wh{P}_{T-1}(dx^{1:N}_{T-1}\times du^{1:N}_{T-1})\notag\\
  &\quad=\frac{1}{|\mc{S}_{\mc{C}^N}|}\sum_{\sigma\in\mc{S}_{\mc{C}^N}}\int_{A\times\eas}\wh{c}(\mu^{1:M}[x^{1:N}_{T-1}],\mu^{1:M}[(x^{1:N}_{T-1},u^{1:N}_{T-1})])P^\sigma_{T-1}(dx^{1:N}_{T-1}\times du^{1:N}_{T-1})\label{eq117}\\
  &\quad=\frac{1}{|\mc{S}_{\mc{C}^N}|}\sum_{\sigma\in\mc{S}_{\mc{C}^N}}\int_{A\times\eas}\wh{c}(\mu^{1:M}[x^{\sigma(1):\sigma(N)}_{T-1}],\mu^{1:M}[(x^{\sigma(1):\sigma(N)}_{T-1},u^{\sigma(1):\sigma(N)}_{T-1})])P^\sigma_{T-1}(dx^{1:N}_{T-1}\times du^{1:N}_{T-1})\notag\\
  &\quad=\frac{1}{|\mc{S}_{\mc{C}^N}|}\sum_{\sigma\in\mc{S}_{\mc{C}^N}}\int_{A\times\eas}c(x^{1:N}_{T-1},u^{1:N}_{T-1})P_{T-1}(dx^{1:N}_{T-1}\times du^{1:N}_{T-1})\notag\\
  &\quad=\int_{A\times\eas}c^N(x^{1:N}_{T-1},u^{1:N}_{T-1})P_{T-1}(dx^{1:N}_{T-1}\times du^{1:N}_{T-1})\notag
\end{align}
where (\ref{eq117}) follows the definition of $\wh{P}_{T-1}$ and lemma
\ref{lem1}. In particular, the above shows both:
\begin{align}
  &\E[J^{N,\pi}_{T-1,T}(x^{1:N}_{T-1})|x^{1:N}_{T-1}]=\E[J^{N,\hat{\pi}}_{T-1,T}(x^{1:N}_{T-1})|x^{1:N}_{T-1}],\quad P^{\hat{\pi}}_{\nu_0,T-1}\text{-}\as\quad\text{and}\label{eq119}\\
  &\E[J^{N,\hat{\pi}}_{T-1,T}(x^{\sigma(1):\sigma(N)}_{T-1})|x^{\sigma(1):\sigma(N)}_{T-1}]=\E[J^{N,\hat{\pi}}_{T-1,T}(x^{1:N}_{T-1})|x^{1:N}_{T-1}],\quad P^{\hat{\pi}}_{\nu_0,T-1}\text{-}\as\quad \forall\:\sigma\in\mc{S}_{\mc{C}^N},\label{eq147}
\end{align}
where (\ref{eq147}) is due to the cluster-exchangeability of the cost and policy $\hat{\pi}$. Hypothesizing that (\ref{eq119}-\ref{eq147}) hold at some timestep $1\leq t+1<T$, we apply a
similar sequence of computations to find
\begin{align}
  &\int_{A\times\eas}\bigg(c(x^{1:N}_t,u^{1:N}_t)+\beta\E[J^{N,\hat{\pi}}_{t+1,T}(x^{1:N}_{t+1})|x^{1:N}_t,u^{1:N}_t]\bigg)\wh{P}_t(dx^{1:N}_t\times du^{1:N}_t)\notag\\
  &\quad=\frac{1}{|\mc{S}_{\mc{C}^N}|}\sum_{\sigma\in\mc{S}_{\mc{C}^N}}\int_{A\times\eas}\bigg(\wh{c}(\mu^{1:M}[x^{1:N}_t],\mu^{1:M}[(x^{1:N}_t,u^{1:N}_t)])\cdots\notag\\
  &\cdots+\beta\int_{\ess}\E[J^{N,\hat{\pi}}_{t+1,T}(x^{1:N})|x^{1:N}]\mc{T}^N(dx^{1:N}|x^{1:N}_t,u^{1:N}_t)\bigg)P^\sigma_t(dx^{1:N}_t\times du^{1:N}_t)\notag\\
  &\quad=\frac{1}{|\mc{S}_{\mc{C}^N}|}\sum_{\sigma\in\mc{S}_{\mc{C}^N}}\int_{A\times\eas}\bigg(\wh{c}(\mu^{1:M}[x^{\sigma(1):\sigma(N)}_t],\mu^{1:M}[(x^{\sigma(1):\sigma(N)}_t,u^{\sigma(1):\sigma(N)}_t)])\cdots\notag\\
  &\cdots+\beta\int_{\ess}\E[J^{N,\hat{\pi}}_{t+1,T}(x^{\sigma(1):\sigma(N)})|x^{\sigma(1):\sigma(N)}]\mc{T}^N(dx^{\sigma(1):\sigma(N)}|x^{\sigma(1):\sigma(N)}_t,u^{\sigma(1):\sigma(N)}_t)\bigg)P^\sigma_t(dx^{1:N}_t\times du^{1:N}_t)\notag\\
  &\quad=\frac{1}{|\mc{S}_{\mc{C}^N}|}\sum_{\sigma\in\mc{S}_{\mc{C}^N}}\int_{A\times\eas}\bigg(c(x^{1:N}_t,u^{1:N}_t)\cdots\notag\\
  &\cdots+\beta\int_{\ess}\E[J^{N,\hat{\pi}}_{t+1,T}(x^{1:N})|x^{1:N}]\mc{T}^N(dx^{1:N}|x^{1:N}_t,u^{1:N}_t)\bigg)P_t(dx^{1:N}_t\times du^{1:N}_t)\notag\\
  &\quad=\int_{A\times\eas}\bigg(c(x^{1:N}_t,u^{1:N}_t)+\beta\E[J^{N,\pi}_{t+1,T}(x^{1:N}_{t+1})|x^{1:N}_t,u^{1:N}_t]\bigg)P_t(dx^{1:N}_t\times du^{1:N}_t).\notag
\end{align}
To elaborate, the first equality follows the definition of $\wh{P}_{t}$, iterated
expectations and lemma \ref{lem1}, the second equality follows the
cluster-permutation invariance of the kernel $\mc{T}^N$, the empirical measures
and the cost-to-go (in the sense of (\ref{eq147})), the third equality
follows a change of variables, and the final equality is due to the inductive
hypothesis. Comparing the first and final expressions yields
\begin{align}
  &\E[J^{N,\pi}_{t,T}(x^{1:N}_{t})|x^{1:N}_{t}]=\E[J^{N,\hat{\pi}}_{t,T}(x^{1:N}_{t})|x^{1:N}_{t}],\quad P^{\hat{\pi}}_{\nu_0,t}\text{-}\as,\quad\text{and}\label{eq121}\\
  &\E[J^{N,\hat{\pi}}_{t,T}(x^{\sigma(1):\sigma(N)}_t)|x^{\sigma(1):\sigma(N)}_t]=\E[J^{N,\hat{\pi}}_{t,T}(x^{1:N}_t)|x^{1:N}_t],\quad P^{\hat{\pi}}_{\nu_0,t}\text{-}\as\quad\forall\:\sigma\in\mc{S}_{\mc{C}^N}\label{eq148}.
\end{align}
By backwards induction on $\{0,1,\dots,T-1\}$, (\ref{eq121}-\ref{eq148}) holds for all
$0\leq t<T$ and, since $\nu_0$ is itself C-Ex$^N$, we obtain
\begin{align}
  J^N_T(\pi,\nu_0)=\E_{\nu_0}[J^{N,\pi}_{0,T}(x^{1:N}_0)]=\E_{\nu_0}[J^{N,\hat{\pi}}_{0,T}(x^{1:N}_0)]=J^N_T(\hat{\pi},\nu_0).\label{eq125}
\end{align}
Hence, restriction to $\Pi^{N,T}_{\cex}$ is without loss for $\mf{G}^N_T$
and the law of $(x^{1:N}_t,u^{1:N}_t)$ is cluster-exchangeable without loss for
$0\leq t<T$. The result for $\mf{G}^N_\infty$ follows similarly: fixing
$\pi\in\bar{\Pi}^N$, (\ref{eq125}) holds for any $T\geq 1$, and thus
$J^N(\pi,\nu_0)=J^N(\hat{\pi},\nu_0)$ by the monotone convergence
theorem.\hfill{$\qed$}

\section{Infinite Population Team and Structure of Optima}\label{sec7}
We have seen that optimal policies for $\mf{G}^N$ when $N<\infty$ can be
randomized, C-Ex$^N$, and depend on the centralized IS only up clusters'
empirical distributions without loss. The main objective for this section is to
extend this optimality to $\mf{G}^\infty$. Consequently, a De Finetti-type
representation theorem will permit a characterization of C-Ex policies as those
which are C-Sym, conditionally independent, and genuinely decentralized under
the dCMF IS. Moreover, we will construct a mean-field MDP admitting a
well-defined dynamic program for realizing these decentralized optimal policies.
This is in contrast to the finite population $\mf{G}^N$, where no such
representation was available for C-Ex$^N$ policies. We proceed by first
reviewing the essential definitions/convergence results from the theory of
exchangeable processes, but recast for the more general setting of
cluster-exchangeability. 

\subsection{Infinitely Cluster-Exchangeable Processes}\label{sec8}
We begin with the relevant facts and definitions when there is just one cluster.
Let $Y=(Y^1,Y^2,\dots)$ be a random, $\mbb{Y}$-valued process and $\lambda$ a
random element of $\mc{P}(\mbb{Y})$ with
$\mc{L}(\lambda)=:\Lambda\in\mc{P}(\mc{P}(\mbb{Y}))$. Then $Y$ is called
conditionally $\iid$ with directing measure $\lambda$ if and only if
\begin{align*}
  P(Y^1\in A^1,Y^2\in A^2,\dots|\lambda)=\prod_{i=1}^\infty\lambda(A^i),\quad\text{or}\quad P(Y^1\in A^1,Y^2\in A^2,\dots)=\int_{\mc{P}(\mbb{X})}\prod_{i=1}^\infty\lambda(A^i)\Lambda(d\lambda)
\end{align*}
where $A^i\in\mc{B}(\mbb{Y})$ $\forall i\in \mbb{N}$. Note that $\lambda$ is
nothing more than the regular conditional distribution of each $Y^i$ given
$\sigma(\lambda)$. The classical result of De Finetti holds that infinite
exchangeability of $Y$ is equivalent to the existence of a directing measure
$\lambda$, unique almost surely, satisfying the above representation
\cite[theorem 1.1]{Kallenberg_2005}. Moreover, an analogous characterization
fails for finite exchangeable collections \cite{Diaconis_Freedman_1980}. The
following provides a cursory link between the finite and infinite collections
whenever $Y$ is known to be infinitely exchangeable.
\begin{lemma}[{\cite[lemma 2.15]{Aldous_Ibragimov_1985}}]\label{lem10} Let
  $Y=(Y^1,Y^2,\dots)$ be conditionally $\iid$ with directing measure $\lambda$.
  Then $\mu[Y^{1:N}]\rightarrow\lambda$ $\as$ as $N\rightarrow\infty$.
\end{lemma}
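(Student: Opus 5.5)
The plan is to reduce the claim to a strong law of large numbers applied conditionally on the directing measure, and then upgrade countably many scalar convergences to weak convergence of measures. I take the standard setting in which $\mbb{Y}$ is standard Borel and "$\mu[Y^{1:N}]\rightarrow\lambda$" means weak convergence almost surely. Since $\mbb{Y}$ is standard Borel, it carries a metrizable topology under which it is separable, so there is a countable convergence-determining class. Concretely, after embedding $\mbb{Y}$ homeomorphically into a compact metric space (e.g.\ the Hilbert cube), fix a countable family $\{g_k\}_{k\in\mbb{N}}$ of bounded uniformly continuous functions that is dense in the uniformly continuous bounded functions on the completion. Weak convergence $\nu_N\rightarrow\nu$ is then equivalent to $\int g_k\,d\nu_N\rightarrow\int g_k\,d\nu$ for all $k$.

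First, fix $k$. Conditionally on $\sigma(\lambda)$ the variables $Y^i$ are i.i.d.\ with law $\lambda$, so $g_k(Y^i)$ are conditionally i.i.d., bounded, with conditional mean $\int g_k\,d\lambda$. I will make this precise through the mixture representation $P(\cdot)=\int \lambda^{\otimes\infty}(\cdot)\,\Lambda(d\lambda)$. For $\Lambda$-a.e.\ fixed $\lambda$, the classical strong law under the product measure $\lambda^{\otimes\infty}$ gives
\[\frac1N\sum_{i=1}^N g_k(Y^i)\rightarrow \int g_k\,d\lambda,\quad \lambda^{\otimes\infty}\text{-a.s.}\]
The event where convergence holds is jointly measurable in $(\lambda,Y)$, because it is defined through countable limits of measurable maps. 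I then integrate its indicator against $\Lambda$ to get that the convergence holds with $P$-probability one.

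Second, I intersect over $k\in\mbb{N}$ to obtain a single full-probability event on which $\int g_k\,d\mu[Y^{1:N}]\rightarrow\int g_k\,d\lambda$ for every $k$. By the convergence-determining property this yields $\mu[Y^{1:N}]\rightarrow\lambda$ weakly almost surely.

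The main obstacle is the measurability bookkeeping in the first step, namely that the convergence set is jointly measurable in $(\lambda,Y)$ so that the disintegration over $\Lambda$ is legitimate. I would handle it by noting that $(\lambda,y)\mapsto\int g_k\,d\lambda$ is measurable on $\mc{P}(\mbb{Y})\times\mbb{Y}^\infty$ and that the partial averages are continuous in $y$. A second, smaller point is choosing $\{g_k\}$ for a non-compact $\mbb{Y}$. Here I would use that weak convergence on a separable metric space is characterized by countably many bounded Lipschitz functions, which is the standard Varadarajan-type argument.
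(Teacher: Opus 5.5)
Your proposal is correct and follows the paper's route: you condition on $\lambda$, apply the almost-sure i.i.d.\ convergence under $\lambda^{\otimes\infty}$, and integrate against $\Lambda$. The only difference is that you unpack the paper's one-line appeal to the pointwise ergodic theorem into the SLLN on a countable convergence-determining class, and you make the joint measurability explicit. One small precision: the integration step needs the conditional form $P(Y\in\cdot\,|\,\lambda)=\lambda^{\otimes\infty}$, i.e.\ the joint law of $(\lambda,Y)$, not just the mixture formula for the law of $Y$.
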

\noindent\textbf{Proof}. Let
$\Xi=\{\lim_{N\rightarrow\infty}\mu[Y^{1:N}]=\lambda\}$ and note
\begin{align}
  P(\Xi)=\E[\E[\2{\Xi}|\lambda]]=\E\bigg[\int_{\mbb{Y}}\1{\lim_{N\rightarrow\infty}\mu[y^{1:N}]=\lambda}\prod_{i=1}^\infty\lambda(dy^i)\bigg]=1\label{eq131}
\end{align}
where the last equality holds by the pointwise ergodic theorem since $Y$ is
$\iid$ given $\lambda$.\hfill{$\qed$}\\[5pt]
\indent In particular, whenever $Y$ is known to be infinitely exchangeable, the
empirical measures $\mu[Y^{1:N}]$ converge weakly $\as$ to the directing measure
$\lambda$.\\[5pt]
\indent In service of infinite population control, such a representation (when
applied to an exchangeable state-action profile at some fixed time point)
provides an avenue for relaxing the information required at each decision maker
since the directing measure is sufficient to render the activities of agents
symmetric and independent \cite{Sanjari_Saldi_Yüksel_2025}. To facilitate a
similar analysis for our model, we have the following De Finetti-type
decomposition for C-Ex processes.
\begin{proposition}[{\cite[proposition 3.8, corollary
  3.9]{Aldous_Ibragimov_1985}}]\label{prop4} Let
  $Y=(Y^{\mc{C}^\infty_1},Y^{\mc{C}^\infty_2},\dots,Y^{\mc{C}^\infty_M})$ be
  C-Ex, $\mbb{Y}^j$-valued over cluster $j\in V$. Denote with $\lambda^j$ the
  directing measure of $Y^{\mc{C}^\infty_j}$, and set
  $\mc{F}:=\sigma(\lambda^j:j\in V)$. Then $Y$ is conditionally independent
  given $\mc{F}$, and for all $i\in\mbb{N}$, $\lambda^{C(i)}$ is a regular
  conditional distribution for $Y^{i}$ given $\sigma(\lambda^{C(i)})$.
  Equivalently,
  \begin{align*}
    &P(Y^i\in A^i,\;\forall i\in\mbb{N}|\mc{F})=\prod_{j\in V}\prod_{i\in\mc{C}^\infty_j}\lambda^j(A^i),
    \\&\text{or}\quad P(Y^i\in A^i,\;\forall i\in\mbb{N})=\int_{\mc{P}(\mbb{Y}^1)\times\mc{P}(\mbb{Y}^2)\times\cdots\times\mc{P}(\mbb{Y}^M)}\prod_{j\in V}\prod_{i\in\mc{C}^\infty_j}\lambda^j(A^i)\Lambda(d\lambda^1\times d\lambda^2,\times\cdots\times d\lambda^M)
  \end{align*}
  where $A^i\in\mc{B}(\mbb{Y}^{C(i)})$ for all $i\in\mbb{N}$, and
  $\mc{L}(\lambda^1,\lambda^2,\dots,\lambda^M)=\Lambda$.
\end{proposition}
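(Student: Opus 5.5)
To prove Proposition \ref{prop4}, the plan is to reduce to the one-cluster De Finetti theorem applied to each cluster separately. Then a symmetry argument shows that, given all $M$ directing measures, the clusters are independent of one another.

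\textbf{Step 1 (each cluster alone).} For fixed $j\in V$, restricting the cluster-permutations in Definition \ref{def1} to those supported on $\mc{C}^\infty_j$ shows that the sequence $Y^{\mc{C}^\infty_j}$ is infinitely exchangeable. De Finetti's theorem \cite[theorem 1.1]{Kallenberg_2005} then gives a directing measure $\lambda^j$, unique almost surely. By Lemma \ref{lem10}, $\lambda^j$ is the almost-sure weak limit of the empirical measures $\mu[Y^{\mc{C}^N_j}]$ as $N\to\infty$. Consequently $\lambda^j$ is measurable with respect to the tail/exchangeable $\sigma$-field of cluster $j$, and it is a regular conditional distribution of $Y^i$ given $\sigma(\lambda^j)$ for each $i\in\mc{C}^\infty_j$.

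\textbf{Step 2 (joint conditional structure).} Fix finite sets $I_j\subset\mc{C}^\infty_j$ and bounded measurable functions $g_i$. The target identity is
\[
\E\Big[\prod_{j}\prod_{i\in I_j}g_i(Y^i)\,\Big|\,\mc{F}\Big]=\prod_j\prod_{i\in I_j}\lambda^j(g_i)\quad\text{a.s.}
\]
Write each $\lambda^j(g)$ as the almost-sure limit of the cluster averages $N_j^{-1}\sum_{k\in\mc{C}^N_j}g(Y^k)$. Take a bounded $\mc{F}$-measurable test variable $H$, chosen as a continuous function of finitely many $\lambda^j(h)$'s. Cluster-exchangeability then lets me replace the indices in $I_j$ by distinct indices chosen uniformly from within each cluster, far out in $\mc{C}^N_j$. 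This leaves $\E[H\prod g_i(Y^i)]$ unchanged, since $H$ is invariant under cluster permutations (it is a limit of empirical averages). Averaging over all such injections and letting $N\to\infty$, the product of the $U$-statistic averages converges to $\prod_j\prod_i\lambda^j(g_i)$; the diagonal terms are of order $O(1/N_j)$. Dominated convergence then yields the identity. A $\pi$--$\lambda$ argument extends it from products of $g_i$'s to the full product $\sigma$-field and from the test variables $H$ to all of $\mc{F}$. Integrating against $\Lambda$ gives the unconditional form.

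\textbf{Main obstacle.} The delicate point is justifying that $H$ (and each $\lambda^j$) is unchanged when indices are moved within clusters. This requires showing that $\lambda^j$ is, almost surely, a limit of empirical measures that is invariant under finitary cluster permutations. It also requires passing the limit through the expectation uniformly over the averaging. I would handle this with Lemma \ref{lem10} together with boundedness and dominated convergence. If measurability issues arise, I would instead cite the Hewitt--Savage-type 0-1 structure: conditioning on the exchangeable $\sigma$-field of the cluster-permutation group gives $\mc{F}$ up to null sets, as in \cite{Aldous_Ibragimov_1985}.
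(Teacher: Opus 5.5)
Your proof is correct, but it takes a different route from the paper. The paper never works with $\mc{F}$-measurable test variables. For each $j$ it forms the augmented sequence $Z^i_j:=(Y^{-\mc{C}^\infty_j},Y^i_j)$, $i\geq 1$, which is exchangeable, and applies the classical De Finetti theorem to it. Using Lemma \ref{lem10}, it identifies the directing measure as $\eta^j=\delta_{Y^{-\mc{C}^\infty_j}}\otimes\lambda^j$, so that $\sigma(\eta^j)=\sigma(Y^{-\mc{C}^\infty_j},\lambda^j)$ up to null sets. Hence $Y^{\mc{C}^\infty_j}$ is conditionally i.i.d.\ $\lambda^j$ given the whole remaining process, by \cite[lemma 2.12]{Aldous_Ibragimov_1985}, and the product formula given $\mc{F}$ follows by conditioning cluster by cluster. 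That argument is short and puts all the analysis into the one-sequence theorem. It also gives a stronger intermediate fact: each cluster is conditionally i.i.d.\ given all other clusters, not merely given $\mc{F}$. The cost is applying De Finetti on the large augmented space $\prod_{k\neq j}(\mbb{Y}^k)^{\mbb{N}}\times\mbb{Y}^j$. You instead use De Finetti only within each cluster and prove the cross-cluster factorization directly. This is essentially the Hewitt--Savage/$U$-statistic proof of De Finetti applied to the product of within-cluster permutation groups. It makes explicit that clusters interact only through the permutation-invariant $\sigma$-field, and it would carry over to other partial-exchangeability structures. The bookkeeping you flagged is real but routine. First, define each $\lambda^j$ as the empirical limit on the event where it exists. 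Both this event and the limit are unchanged by finitary cluster permutations, because the empirical measures over $\mc{C}^N_j$ coincide exactly once $N$ exceeds the support of the permutation. Second, Lemma \ref{lem10} gives only weak convergence, so run the averaging with $g_i\in C_b$, or use the conditional strong law for each fixed bounded measurable $g_i$, and then close under bounded pointwise limits. Third, extend in $H$ by a monotone class argument.
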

\noindent{\bf Proof}. For $j\in V$, define
$Y^{-\mc{C}_j^\infty}:=(Y^{\mc{C}^\infty_1},Y^{\mc{C}^\infty_2},\dots,Y^{\mc{C}^\infty_{j-1}},Y^{\mc{C}^\infty_{j+1}},\dots,Y^{\mc{C}^\infty_M})$
and write $Y^{\mc{C}^\infty_j}=(Y^1_j,Y^2_j,\dots)$. Introduce the new process
$Z_j=(Z^1_j,Z^2_j,\dots)$, with $Z^i_j:=(Y^{-\mc{C}^\infty_j},Y^i_j)$ for all
$i\in\mbb{N}$. Then $Z_j$ is infinitely exchangeable, so by De Finetti's theorem
it is conditionally $\iid$ with some directing measure $\eta^j$. In particular,
$Y^{\mc{C}^\infty_j}$ is itself conditionally independent given
$\sigma(\eta^j)$, but it is also C-Ex in its own right, and hence admits a
directing measure $\lambda^j$. By Lemma \ref{lem10} we then find
\begin{align*}
  \mu[Z_j^{1:N}]=\frac{1}{N}\sum_{i=1}^N\delta_{(Y^{-\mc{C}^\infty_j}, Y^i_j)}&=\delta_{Y^{-\mc{C}^\infty_j}}\otimes\bigg(\frac{1}{N}\sum_{i=1}^N\delta_{Y^i_j}\bigg)
  \overunderset{\as}{N\rightarrow\infty}{\longrightarrow}\delta_{Y^{-\mc{C}^\infty_j}}\otimes\lambda^j.
\end{align*}
Hence, $\eta^j=\delta_{Y^{-\mc{C}^\infty_j}}\otimes\lambda^j$ $\as$, and hence
$\sigma(\eta^j)=\sigma(Y^{-\mc{C}^\infty_j},\lambda^j)$. This implies $\eta^j$
and $Y^{\mc{C}^\infty_j}$ are independent given $\lambda^j$ (see \cite[lemma
2.12]{Aldous_Ibragimov_1985}). That is,
\begin{align*}
  P(Y^i_j\in A^i,\;\forall i\in\mbb{N}\,|\,Y^{-\mc{C}^\infty_j},\lambda^j)=P(Y^i_j\in A^i,\;\forall i\in\mbb{N}\,|\,\lambda^j)
\end{align*}
so $Y^{\mc{C}^\infty_j}$ is conditionally independent of $Y^{-\mc{C}^\infty_j}$
given $\lambda^j$. Since this holds for all $j$, and
$\sigma(\lambda^j)\subseteq\mc{F}\subseteq \sigma(\eta^j:j\in V)$ for $j\in V$,
take $A^i\in\mc{B}(\mbb{Y}^{C(i)})$ for all $i\in\mbb{N}$ and compute
\begin{align*}
  P(Y^i\in A^i,\;\forall i\in\mbb{N}\,|\,\mc{F})&=\prod_{j\in V}P(Y^i\in A^i,\;\forall i\in\mc{C}^\infty_j\,|\,\mc{F})\\&=\prod_{j\in V}P(Y^i\in A^i,\;\forall i\in\mc{C}^\infty_j\,|\,\lambda^j)=\prod_{j\in V}\prod_{i\in\mc{C}^\infty_j}\lambda^j(A^i).\tag*{$\qed$} 
\end{align*}
\indent Some discussion is warranted to highlight the utility of proposition
\ref{prop4} in the context of $\mf{G}^N$, $\mf{G}^\infty$. Foremost, policies
corresponding to C-Ex state-action profiles are symmetric and independent within
each subpopulation, and so both $\bar{\Pi}^{T}_{\cex}=\Pi^{T}_{\csym}$ and
$\bar{\Pi}_{\cex}=\Pi_{\csym}$. To sketch the forthcoming arguments, consider
$\mf{G}^\infty_T$. Fix $1\leq t< T$ and suppose
$(x^{1:\infty}_t,u^{1:\infty}_t)$ is C-Ex with
directing measure $\tilde{\theta}^j_t$ on cluster $\mc{C}^\infty_j$ such that
$\tilde{\theta}^j_t(dx\times\mbb{U}^j)=:\tilde{\mu}^j_t(dx)$ $\forall\:j\in V$.
Proposition \ref{prop4} then implies
\begin{align}
  P((x^{1:\infty}_t,u^{1:\infty}_t)\in A|\tilde{\theta}^{1:M}_t)&=\int_A\prod_{j\in V}\prod_{i\in\mc{C}^\infty_j}\tilde{\theta}^{j}_t(dx^i\times du^i)\notag\\
  &=\int_A\prod_{j\in V}\prod_{i\in\mc{C}^\infty_j}\tilde{\pi}^{j}_t(du^i|x^i)\tilde{\mu}^{j}_t(dx^i)\label{eq52}
\end{align}
where $A\in\mc{B}(\bar{\mbb{X}}\times\bar{\mbb{U}})$, and we identify the kernel
$\tilde{\pi}^j_t$ as the policy of each agent $i\in\mc{C}^\infty_j$. Notably,
the $\tilde{\pi}^j_t$ are determined by $\tilde{\mu}^j_t$ and
$\tilde{\theta}^j_t$. Thus, conditioning on
$\{x^{i}_t,\,\tilde{\mu}^{j}_t,\tilde{\gamma}^j_t,\widetilde{\mc{I}}^i_t\}$,
where $\tilde{\gamma}^j_t$ is the mechanism generating $\tilde{\theta}^j_t$ from
some information process $\widetilde{\mc{I}}^i_t$, agents in $\mc{C}^\infty_j$
independently select actions using an identical policy.\\[5pt]
\indent Restricting to the class of C-Ex strategies will be shown later to yield
a mean-field MDP as a controlled flow of state directing measures. However, in
order to establish that this restriction may be taken without loss, we require
some results on the weak convergence of C-Ex$^N$ processes. The supporting
arguments are essentially unchanged from the fully exchangeable case by first
adopting a compatible definition for the directing measure of a C-Ex process. 
\begin{definition}[Directing Measure of Cluster-Exchangeable
  Processes]\label{def4} Let $Y$ be a cluster-exchangeable process, as above.
  Then its directing measure is the array $\lambda^{1:M}$. This is viewed as a
  $\mc{P}(\prod_{j\in V}\mbb{Y}^j)$-valued random variable by setting
  \begin{align*}
    \lambda^{1:M}(A)=\int_A\prod_{j\in V}\lambda^j(dy^j),\quad \forall\:A\in\mc{B}(\prod_{j\in V}\mbb{Y}^j).
  \end{align*}
\end{definition}
To justify this Definition \ref{def4}, consider
$Y=(Y^{\mc{C}^\infty_1},Y^{\mc{C}^\infty_2},\dots,Y^{\mc{C}^\infty_M})$ as in
proposition \ref{prop4}, and write $\mc{C}^\infty_j=\{k^j_m\}_{m\geq 1}$ so that
$k^j_m<k^j_{m+1}$ for all $m\geq 1$, $j\in V$. Then rewriting the representation
in proposition \ref{prop4},
\begin{align}
  P(Y^i\in A^i,\;\forall i\in\mbb{N}|\lambda^{1:M})&=\prod_{j\in V}\prod_{m\geq 1}\lambda^j(A^{k^j_m})=\prod_{m\geq 1}\prod_{j\in V}\lambda^j(A^{k^j_m})\notag\\
  &=P((Y^{k^1_m},Y^{k^2_m},\dots,Y^{k^M_m})\in(A^{k^1_m},A^{k^2_m},\dots,A^{k^M_m}),\;\forall m\in\mbb{N}|\lambda^{1:M}).\label{eq87}
\end{align}
Hence, the process $\breve{Y}:=(\breve{Y}^1,\breve{Y}^2,\dots)$ with
$\breve{Y}^m:=(Y^{k^1_m},Y^{k^2_m},\dots,Y^{k^M_m})$ for $m\geq 1$ is infinitely
exchangeable (in the sense of the original De Finetti theorem) with directing
measure is $\prod_{j\in V}\lambda^j$. Definition \ref{def4} does not imply
clusters are independent; only that their dependence is mediated through the
joint law $\mc{L}(\lambda^{1:M})$, which assigns full measure on the set of
factorizable measures in $\mc{P}(\prod_{j\in V}\mbb{Y}^j)$.\\[5pt]
\indent In case each $\mbb{Y}^j$ is compact, the subset of continuous, bounded functions
\begin{align}
  \big\{g\in C_b\big(\prod_{j\in V}\mbb{Y}^j\big)\setsep g=\prod_{j\in V}g^j,\;g^j\in C_b(\mbb{Y}^j)\;\forall\:j\in V\big\}
\end{align}
is dense by the Stone-Weierstrass theorem. Hence, weak convergence of directing measures is
characterized by weak convergence in each coordinate, and any such limit is
itself a directing measure \cite[lemma 1.1]{Parthasarathy_1967}. From this we
immediately obtain a generalization of Lemma \ref{lem10} to C-Ex processes.
\begin{lemma}\label{lem8} Let
  $Y=(Y^{\mc{C}^\infty_1},Y^{\mc{C}^\infty_2},\dots,Y^{\mc{C}^\infty_M})$ be
  infinitely cluster-exchangeable, $\mbb{Y}^j$-valued over cluster $j\in V$, and
  suppose these spaces are compact. If $Y$ is directed by $\lambda^{1:M}$, and
  $\lambda^j_N:=\mu^j[Y^{1:N}]$ for $j\in V$, $N\geq M$, then
  $\lambda^{1:M}_N\rightarrow\lambda^{1:M}$ $\as$ as $N\rightarrow\infty$.
\end{lemma}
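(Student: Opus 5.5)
The plan is to reduce everything to Lemma \ref{lem10}, one cluster at a time, and then combine the clusters using the product-function characterization of weak convergence on $\mc{P}(\prod_{j\in V}\mbb{Y}^j)$ discussed just before the statement. Proposition \ref{prop4} tells us that for each $j\in V$ the subsequence $Y^{\mc{C}^\infty_j}=(Y^{k^j_1},Y^{k^j_2},\dots)$ is conditionally $\iid$ with directing measure $\lambda^j$. Lemma \ref{lem10} applied to this subsequence then gives an event $\Xi_j$ with $P(\Xi_j)=1$ on which
\[
\frac{1}{n}\sum_{m=1}^{n}\delta_{Y^{k^j_m}}\longrightarrow\lambda^j\quad\text{weakly as } n\rightarrow\infty.
\]

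Next we relate this to $\lambda^j_N=\mu^j[Y^{1:N}]$. By definition, $\lambda^j_N$ is the empirical measure of the first $N^N_j$ elements of cluster $j$, that is, $\lambda^j_N=\frac{1}{N^N_j}\sum_{m=1}^{N^N_j}\delta_{Y^{k^j_m}}$. The clusters are indexed so that $\mc{C}^N_j=\mc{C}^\infty_j\cap\{1,\dots,N\}$, which is implicit in the restriction $\pi|_N$ and in Definition \ref{def1}. By Assumption \ref{ass1}(iv), $N^N_j\rightarrow\infty$. So $\lambda^j_N$ runs through a nondecreasing sequence of the partial empirical measures above, tending to infinity. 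It therefore converges to $\lambda^j$ on $\Xi_j$, since any sequence of indices tending to infinity inherits the limit. This holds even though $N^N_j$ may stay constant across consecutive $N$.

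To combine the clusters, work on $\Xi:=\bigcap_{j\in V}\Xi_j$, which has probability one because $V$ is finite. Take any product test function $g=\prod_{j\in V}g^j$ with $g^j\in C_b(\mbb{Y}^j)$. Then
\[
\int g\,d\lambda^{1:M}_N=\prod_{j\in V}\int g^j\,d\lambda^j_N\longrightarrow\prod_{j\in V}\int g^j\,d\lambda^j=\int g\,d\lambda^{1:M}.
\]
The spaces $\mbb{Y}^j$ are compact, so the algebra generated by such products is uniformly dense in $C(\prod_{j\in V}\mbb{Y}^j)$ by Stone--Weierstrass. Since all the measures involved are probability measures, a standard $\epsilon/3$ argument extends the convergence to every continuous $g$. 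Hence $\lambda^{1:M}_N\rightarrow\lambda^{1:M}$ weakly on $\Xi$, which is the $\as$ convergence claimed.

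The main point needing care is the second step: making sure $\mu^j[Y^{1:N}]$ is exactly an initial segment of cluster $j$'s enumeration and that $N^N_j\rightarrow\infty$. Once those two facts are in place, the rest follows directly from Lemma \ref{lem10} and density.
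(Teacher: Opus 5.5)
Your proof is correct and takes essentially the same route as the paper: apply Lemma \ref{lem10} cluster by cluster, then pass to joint weak convergence of the product measures using compactness via the Stone--Weierstrass density of product test functions. You fill in details the paper leaves implicit, namely that $\mc{C}^N_j$ is an initial segment of $\mc{C}^\infty_j$ with $N^N_j\rightarrow\infty$, and that you argue pathwise on a single full-measure event $\bigcap_{j\in V}\Xi_j$.
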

\noindent{\bf Proof}. Invoke the pointwise ergodic theorem as in Lemma
\ref{lem10} to obtain $\lambda^j_N\rightarrow\lambda^j$ weakly $\as$
$\forall\,j\in V$. The joint weak convergence of
$\lambda^{1:M}_N\rightarrow\lambda^{1:M}$ $\as$ follows since each $\mbb{Y}^j$
is compact.\hfill{$\qed$}\\[5pt]
\indent Lemma \ref{lem8} is applicable whenever we are operating in a `top-down'
regime -- that is, when the infinite ensemble is known to be C-Ex \textit{a
priori}. Conversely, in the `bottom-up' regime where we only know each finite
ensemble is C-Ex$^N$, almost sure convergence of the finite ensemble empirical
measures is lost. Instead, Theorems \ref{thm8} and \ref{thm12} below establish
their convergence in law along a subsequence. The first parallels \cite[Theorem
13]{Diaconis_Freedman_1980}, which associates C-Ex$^N$ collections with C-Ex
ones by comparing sampling with and without replacement.
\begin{theorem}\label{thm8} Fix $N\geq M$, and let
  $\bar{Y}=(\bar{Y}^{\mc{C}^N_1},\bar{Y}^{\mc{C}^N_2},\dots,\bar{Y}^{\mc{C}^N_M})$
  be C-Ex$^N$. Then $\exists\:Y$ C-Ex such that for any positive integers
  $k_j\leq N_j$ for $j\in V$ we have
  \begin{align}
    \|\mc{L}(\bar{Y}^{\mc{C}^\prime_1},\bar{Y}^{\mc{C}^\prime_2},\dots,\bar{Y}^{\mc{C}^\prime_M})-\mc{L}(Y^{\mc{C}^\prime_1},Y^{\mc{C}^\prime_2},\dots,Y^{\mc{C}^\prime_M})\|_{\text{TV}}\leq1-\prod_{j\in V}\bigg(1-\frac{k_j(k_j-1)}{2N_j}\bigg) \notag
  \end{align}
  where $\mc{C}^\prime_j$ consists of the first $k_j$ elements of
  $\mc{C}^\infty_j$ $\forall\:j\in V$.
\end{theorem}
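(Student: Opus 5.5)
Our plan is to mimic the Diaconis--Freedman coupling: sampling with versus without replacement, carried out independently within each cluster. First, the C-Ex$^N$ law of $\bar{Y}$ is a mixture over its cluster empirical measures. Set $\bar{\lambda}^j:=\mu^j[\bar{Y}]\in\mc{P}^{N_j}_E(\mbb{Y}^j)$. Cluster-exchangeability means that, conditional on $\bar{\lambda}^{1:M}$, the array $\bar{Y}$ is uniformly distributed over all cluster-permutations of any representative with those empirical measures. Equivalently, each cluster is an urn of $N_j$ balls drawn \emph{without} replacement, independently across clusters given $\bar{\lambda}^{1:M}$. Independence across clusters holds because a uniform element of $\mc{S}_{\mc{C}^N}=\prod_j\mc{S}_{\mc{C}^N_j}$ has independent uniform components. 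We make this precise by averaging $\mc{L}(\bar{Y})$ over $\mc{S}_{\mc{C}^N}$, which leaves it unchanged by hypothesis, and then disintegrating with respect to $\bar{\lambda}^{1:M}$.

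Next we define $Y$. Take $\bar{\lambda}^{1:M}$ with its law $\Lambda_N$. Given it, let $Y^{\mc{C}^\infty_j}$ be i.i.d.\ $\bar{\lambda}^j$ within each cluster, independently across clusters; this is sampling \emph{with} replacement. Its law is
\[\int\prod_{j\in V}\prod_{i\in\mc{C}^\infty_j}\bar{\lambda}^j(dy^i)\,\Lambda_N(d\bar{\lambda}^{1:M}).\]
Any cluster permutation merely permutes i.i.d.\ coordinates within a cluster, so $Y$ is C-Ex. It is also directed by $\bar{\lambda}^{1:M}$, consistent with Proposition \ref{prop4}.

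To bound the total variation distance, condition on $\bar{\lambda}^{1:M}$ and use convexity of $\|\cdot\|_{\text{TV}}$ under mixtures, so it suffices to handle a fixed $\bar{\lambda}^{1:M}$. We couple both samples on one space. For each $j$, draw indices $I^j_1,\dots,I^j_{k_j}$ i.i.d.\ uniform on the $N_j$ urn positions, independently over $j$. The with-replacement sample reads off the balls at these indices. On the event $D_j$ that the $I^j_m$ are distinct, these indices are a uniform ordered draw without replacement, so the two samples can be made to agree there; off $D_j$ we use an independent without-replacement draw. Then $\|\cdot\|_{\text{TV}}\le P(\text{not all }D_j)=1-\prod_j P(D_j)$ by independence across clusters. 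Finally,
\[P(D_j)=\prod_{m<k_j}\Big(1-\frac{m}{N_j}\Big)\ge 1-\sum_{m<k_j}\frac{m}{N_j}=1-\frac{k_j(k_j-1)}{2N_j}.\]
Since each factor lies in $[0,1]$, we get $1-\prod_j P(D_j)\le 1-\prod_j(\cdots)$ whenever every $k_j(k_j-1)\le 2N_j$. Otherwise some factor is negative; we then either note that the bound is trivially at least $1$ or restrict to the nontrivial regime, handling the sign carefully as Diaconis--Freedman do.

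We expect the main obstacle to be the first step: rigorously justifying the conditional representation, namely that given $\bar{\lambda}^{1:M}$ the C-Ex$^N$ law is the product of independent within-cluster uniform permutations. Measurability of the disintegration over standard Borel $\mbb{Y}^j$ is routine. The care is needed in showing that the first $k_j$ coordinates of each cluster are exactly a without-replacement draw from the conditional urn. Once this is in place, the coupling and the product bound are straightforward.
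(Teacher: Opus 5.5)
Your construction and coupling are essentially the paper's (Appendix \ref{app2}). The paper sets $Y^i:=\bar Y^{I^i_{C(i)}}$ with i.i.d.\ uniform indices independent of $\bar Y$. It then conditions on the event $B$ that the first $k_j$ indices in every cluster are distinct, and uses $\|\mc{L}(Y)-\mc{L}(Y\mid B)\|_{\text{TV}}\le 1-P(B)$; this is your coupling bound in different form.

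The paper does not disintegrate over $\bar\lambda^{1:M}$. The indices are independent of $\bar Y$, and for every fixed tuple of distinct indices C-Ex$^N$ already yields the law of $(\bar Y^{\mc{C}'_1},\dots,\bar Y^{\mc{C}'_M})$. Hence the conditional law of $(Y^{\mc{C}'_1},\dots,Y^{\mc{C}'_M})$ given $B$ equals that law directly. The step you call the main obstacle can be skipped.

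The genuine problem is the last step, which you isolated correctly but then resolved incorrectly. The right-hand side is ``trivially at least $1$'' only when $\prod_j\bigl(1-\frac{k_j(k_j-1)}{2N_j}\bigr)\le 0$. Once an even number of factors are negative and none vanishes, the product is positive and the inequality can fail outright:
\begin{enumerate}[(a)]
\item For $M=2$ and $N_1=N_2=k_1=k_2=6$, each factor is $-\tfrac32$. The right-hand side is $1-\tfrac94<0$, which no total variation distance can satisfy.
\item Take $N_1=N_2=k_1=k_2=4$, $\mbb{Y}^1=\mbb{Y}^2=\{1,2,3,4\}$, and let each cluster of $\bar Y$ be a uniform ordering of $\{1,2,3,4\}$. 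The bound is $\tfrac34$. By Proposition \ref{prop4}, every C-Ex $Y$ puts probability at most $(4!/4^4)^2=(3/32)^2$ on the event that all coordinates are distinct in both clusters. That event has $\bar Y$-probability $1$, so the distance exceeds $\tfrac34$.
\end{enumerate}
So the statement as written is false for $M\ge 2$ in this regime, and no sign bookkeeping can rescue it. In the single-urn Diaconis--Freedman setting the right-hand side reduces to $\frac{k(k-1)}{2n}$, so the issue never arises there.

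The paper's proof has the same silent gap. Its final step, $\prod_j\prod_{k<k_j}(1-k/N_j)\ge\prod_j\bigl(1-\frac{k_j(k_j-1)}{2N_j}\bigr)$, uses monotonicity of products, which is valid only if every right-hand factor is nonnegative. What your argument (and the paper's) actually proves is
\[
1-\prod_{j\in V}P(D_j)\;\le\;1-\prod_{j\in V}\max\Bigl\{0,\,1-\tfrac{k_j(k_j-1)}{2N_j}\Bigr\}\;\le\;\sum_{j\in V}\frac{k_j(k_j-1)}{2N_j}.
\]
This is the theorem with positive parts, or equivalently under $k_j(k_j-1)\le 2N_j$ for all $j$. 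That corrected form is all that Corollary \ref{cor4} and Theorem \ref{thm12} use, since there the $k_j$ are fixed and $N_j\to\infty$.
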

\noindent\textbf{Proof}. The proof follows precisely the same construction as in
\cite[Theorem 13]{Diaconis_Freedman_1980} in light of Definition \ref{def5}. See
Appendix \ref{app2}.
\begin{corollary}\label{cor4}
  Suppose $\mbb{Y}^j$ is compact $\forall\:j\in V$. If $Q_N\in\mc{P}(\prod_{i\in\mc{N}}\mbb{Y}^{C(i)})$ is C-Ex$^N$ $\forall\:N\geq M$, 
  then $\exists\:Q\in\mc{P}(\prod_{i\in\mbb{N}}\mbb{Y}^{C(i)})$ C-Ex and a subsequence $\{N_\ell\}_{\ell\geq 1}$ such that
  \begin{align*}
    Q_{N_\ell}\big|_k\overset{\ell\rightarrow\infty}{\longrightarrow}Q\big|_k,\quad\forall\:k\geq M
  \end{align*}
  where $Q_{N_\ell}\big|_k$, $Q\big|_k$ denote the respective marginals over the $k$-member ensemble.
\end{corollary}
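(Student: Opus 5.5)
The plan is to combine Theorem \ref{thm8} with a diagonal tightness argument. For each $N\geq M$, apply Theorem \ref{thm8} to $\bar{Y}_N\sim Q_N$. This produces a C-Ex process $Y_N$ with law $\wt{Q}_N\in\mc{P}(\prod_{i\in\mbb{N}}\mbb{Y}^{C(i)})$. Fix a $k\geq M$. Let $k_j(k)$ be the number of indices of $\{1,\dots,k\}$ lying in $\mc{C}^\infty_j$, and assume the cluster partitions are nested, i.e. $\mc{C}^N_j=\mc{C}^\infty_j\cap\{1,\dots,N\}$. Then the $k$-member ensemble is exactly $\bigcup_j\mc{C}'_j$, with $\mc{C}'_j$ the first $k_j(k)$ elements of $\mc{C}^\infty_j$. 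Theorem \ref{thm8} gives
\begin{align*}
\big\|Q_N|_k-\wt{Q}_N|_k\big\|_{\text{TV}}\leq 1-\prod_{j\in V}\bigg(1-\frac{k_j(k)(k_j(k)-1)}{2N^N_j}\bigg)\longrightarrow 0\quad(N\rightarrow\infty).
\end{align*}
The convergence holds because Assumption \ref{ass1}(iv) gives $N^N_j\rightarrow\infty$ for every $j$. For the finitely many small $N$ where some $N^N_j<k_j(k)$, the bound is vacuous, which does no harm.

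Next I extract a limit for $\{\wt{Q}_N\}$. Each $\mbb{Y}^j$ is compact, so $\prod_{i\in\mbb{N}}\mbb{Y}^{C(i)}$ is compact metrizable in the product topology. By Prokhorov, $\mc{P}$ of this space is weakly compact, so there is a subsequence $N_\ell$ and a $Q$ with $\wt{Q}_{N_\ell}\rightarrow Q$ weakly. Projection to the first $k$ coordinates is continuous, so $\wt{Q}_{N_\ell}|_k\rightarrow Q|_k$ for every $k$ simultaneously. This avoids a separate diagonal extraction.

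Cluster-exchangeability of $Q$ follows because each $\wt{Q}_{N_\ell}$ is C-Ex. For a cluster permutation $\sigma\in\mc{S}_{\mc{C}^n}$, the induced coordinate map is a homeomorphism, and the invariance $\wt{Q}_{N_\ell}|_n\circ\sigma^{-1}=\wt{Q}_{N_\ell}|_n$ passes to the weak limit. Finally, for $g\in C_b$ of the first $k$ coordinates, I split
\begin{align*}
\Big|\int g\,dQ_{N_\ell}|_k-\int g\,dQ|_k\Big|\leq 2\|g\|_\infty\big\|Q_{N_\ell}|_k-\wt{Q}_{N_\ell}|_k\big\|_{\text{TV}}+\Big|\int g\,d\wt{Q}_{N_\ell}|_k-\int g\,dQ|_k\Big|.
\end{align*}
Both terms vanish, which gives $Q_{N_\ell}|_k\rightarrow Q|_k$ weakly for all $k\geq M$.

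The main obstacle is bookkeeping rather than analysis. The finite cluster sets $\mc{C}^N_j$ must be compatible with $\mc{C}^\infty_j$, so that the first $k$ agents correspond to the first $k_j$ members of each cluster. Every $N^N_j$ must also diverge, and this is exactly where Assumption \ref{ass1}(iv) enters. If the partitions were not nested, I would first relabel within clusters using the C-Ex$^N$ symmetry of $Q_N$. That symmetry makes the law of any $k_j$ distinct members of cluster $j$ independent of which members are chosen, so the marginals are unaffected.
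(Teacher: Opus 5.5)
Your proposal is correct and follows essentially the same route as the paper. Take the C-Ex extensions $\widetilde{Q}_N$ from Theorem~\ref{thm8}, whose finite marginals are within vanishing total variation of those of $Q_N$; extract a weakly convergent subsequence using compactness of $\mathcal{P}(\prod_{i\in\mathbb{N}}\mathbb{Y}^{C(i)})$; and transfer the limit back to $Q_N$ via the triangle inequality. Your version is more complete than the paper's in two places the paper leaves implicit: you check that the limit $Q$ is C-Ex, since invariance under the homeomorphisms induced by cluster permutations survives weak limits, and you check that $N^N_j\to\infty$ via Assumption~\ref{ass1}(iv).
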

\noindent\textbf{Proof}. Theorem \ref{thm8} grants a sequence of C-Ex extensions
$\{\bar{Q}^N\}_{N\geq M}\subset\mc{P}(\prod_{i\in \mbb{N}}\mbb{Y}^{C(i)})$ so
that
\begin{align*}
  \|Q_N\big|_k-\bar{Q}^N\big|_k\|_{\text{TV}}\overset{N\rightarrow\infty}{\longrightarrow} 0,\quad\forall\:k\geq M.
\end{align*}
Hence, if $\{\bar{Q}^N\}_{N\geq M}$ converges weakly to some
$Q\in\mc{P}(\prod_{i\in\mbb{N}}\mbb{Y}^{C(i)})$, then $\{Q_N\}_{N\geq M}$
converges weakly (along any finite marginal) to $Q$ as well. Since
$\prod_{i\in\mbb{N}}\mbb{Y}^{C(i)}$ is compact, so is
$\mc{P}(\prod_{i\in\mbb{N}}\mbb{Y}^{C(i)})$, and we may extract a subsequence
$\{N_\ell\}_{\ell\geq 1}$ with $\bar{Q}^N\rightarrow Q$ for some
$Q\in\mc{P}(\prod_{i\in\mbb{N}}\mbb{Y}^{C(i)})$ as
$\ell\rightarrow\infty$.\hfill{$\qed$}\\[5pt]
\indent For the second part of our discussion on convergence of empirical
measures, we have the following technical lemma, which generalizes \cite[Lemma
7.14]{Aldous_Ibragimov_1985} to the case of C-Ex collections. Subsequently,
Theorem \ref{thm12} generalizes \cite[Proposition 7.20]{Aldous_Ibragimov_1985}
to characterize weak convergence (along finite marginals) of a C-Ex$^N$ ensemble
in terms of the convergence in law of the corresponding empirical measures. For
Lemma \ref{lem12}, recall a sequence of measures $\{P_n\}_{n\geq
1}\subset\mc{P}(\mbb{Y})$ is said to be tight on $\mbb{Y}$ if and only if
$\forall\:\varepsilon>0$, $\exists\:K_\varepsilon\subset\mbb{Y}$ compact such
that $P_n(K_{\varepsilon})>1-\varepsilon$ $\forall\:n\geq 1$.
\begin{lemma}\label{lem12} Let $\{\lambda^{1:M}_n\}_{n\geq 1}$ be a collection
  of $\mc{P}(\prod_{j\in V}\mbb{Y}^j)$-valued random variables. If
  $\{\E[\lambda^{1:M}_n]\}_{n\geq 1}$ is tight on $\prod_{j\in V}\mbb{Y}^j$,
  then $\{\mc{L}(\lambda^{1:M}_n)\}_{n\geq 1}$ is tight on $\mc{P}(\prod_{j\in
  V}\mbb{Y}^j)$.
\end{lemma}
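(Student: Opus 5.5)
We follow the classical argument of \cite[Lemma 7.14]{Aldous_Ibragimov_1985}, which treats the case $M=1$; the product structure plays no essential role. Write $\mbb{Y}:=\prod_{j\in V}\mbb{Y}^j$. We treat each $\lambda^{1:M}_n$ simply as a $\mc{P}(\mbb{Y})$-valued random variable, in the sense of Definition \ref{def4}. The mean measure $\E[\lambda^{1:M}_n]\in\mc{P}(\mbb{Y})$ is defined by $\E[\lambda^{1:M}_n](A):=\E[\lambda^{1:M}_n(A)]$ for $A\in\mc{B}(\mbb{Y})$. The plan is to build, from the tightness of these means, a single compact set $\mc{K}\subset\mc{P}(\mbb{Y})$ that carries mass at least $1-\varepsilon$ under every $\mc{L}(\lambda^{1:M}_n)$.

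\textbf{Step 1 (compact sets in $\mc{P}(\mbb{Y})$).} By tightness, for each $k\geq 1$ choose a compact $K_k\subset\mbb{Y}$ such that
\[
\E[\lambda^{1:M}_n](K_k^c)\leq \varepsilon 4^{-k}\quad\text{for all } n\geq 1.
\]
Then define
\[
\mc{K}:=\big\{\lambda\in\mc{P}(\mbb{Y})\setsep \lambda(K_k^c)\leq 2^{-k}\ \ \forall\,k\geq 1\big\}.
\]
The family $\mc{K}$ is uniformly tight, so Prokhorov's theorem shows it is relatively compact. It is also closed under weak convergence: each $K_k^c$ is open, so by the portmanteau theorem the map $\lambda\mapsto\lambda(K_k^c)$ is lower semicontinuous, and each constraint $\lambda(K_k^c)\leq 2^{-k}$ defines a closed set. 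Hence $\mc{K}$ is compact. If the $\mbb{Y}^j$ are compact, as they are wherever the paper applies this lemma, Steps 1 and 2 are trivial, since then $\mc{P}(\mbb{Y})$ is itself compact. The argument above covers the general statement.

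\textbf{Step 2 (Markov bound).} By Markov's inequality and the definition of the mean measure,
\[
P\big(\lambda^{1:M}_n(K_k^c)>2^{-k}\big)\leq 2^k\,\E[\lambda^{1:M}_n(K_k^c)]\leq \varepsilon 2^{-k}.
\]
A union bound over $k$ then gives
\[
P\big(\lambda^{1:M}_n\notin\mc{K}\big)\leq\sum_{k\geq 1}\varepsilon 2^{-k}=\varepsilon\quad\text{uniformly in } n,
\]
which is exactly tightness of $\{\mc{L}(\lambda^{1:M}_n)\}_{n\geq 1}$.

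\textbf{Expected difficulty.} The only real care needed is in Step 1: we must confirm that $\mc{K}$ is genuinely compact (closedness plus Prokhorov) and that it is a measurable subset of $\mc{P}(\mbb{Y})$. Measurability follows from closedness. The identity $\E[\lambda_n(B)]=\E[\lambda_n](B)$ used in Step 2 holds because the maps $\lambda\mapsto\lambda(B)$ are measurable for the Borel $\sigma$-algebra of the weak topology on $\mc{P}(\mbb{Y})$.

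One might also want tightness on the subset of factorizable measures. That set is closed in $\mc{P}(\mbb{Y})$, by the density of product test functions noted above together with \cite[lemma 1.1]{Parthasarathy_1967}. Intersecting it with $\mc{K}$ therefore keeps the set compact, and this intersection can be used instead of $\mc{K}$.
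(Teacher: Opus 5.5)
Your proposal is correct and is essentially the paper's own proof. It uses the same compacts $K_k$ with expected outside mass at most $\varepsilon 4^{-k}$, and the same set $\{\lambda:\lambda(K_k^c)\leq 2^{-k}\ \forall k\}$, shown closed via the portmanteau theorem and relatively compact via uniform tightness. It finishes with the same Markov-plus-union-bound estimate giving mass at least $1-\varepsilon$ uniformly in $n$.
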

\noindent\textbf{Proof}. Fix $\varepsilon>0$, and let $K_\ell\subset\prod_{j\in
V}\mbb{Y}^{j}$ be compact such that $\E[\lambda^{1:M}_n(K_\ell)]\geq
1-\varepsilon2^{-2\ell}$ for $\ell\geq 1$. Then, by Markov's inequality:
\begin{align*}
  P(\lambda^{1:M}_n(K^c_\ell)>2^{-\ell})\leq 2^{\ell}\E[\lambda^{1:M}_n(K^c_\ell)]\leq\varepsilon2^{-\ell},\quad\forall\ell\geq 1.
\end{align*}
Now, define $\Theta:=\{\theta\in\mc{P}(\prod_{j\in
V}\mbb{Y}^j):\theta(K^c_\ell)\leq 2^{-\ell}\,\forall\,\ell\geq 1\}$, and suppose
$\{\theta_n\}_{n\geq 1}\subseteq\Theta$ is such that
$\theta_n\rightarrow\theta\in\mc{P}(\prod_{j\in V}\mbb{Y}^j)$ weakly as
$n\rightarrow\infty$. Then, by the portmanteau lemma:
\begin{align*}
  2^{-\ell}\geq \liminf_{n\rightarrow\infty}\theta_n(K^c_\ell)\geq\theta(K^c_{\ell}),\quad\forall\,\ell\geq 1
\end{align*}
so $\theta\in\Theta$, and hence $\Theta$ is closed. It is also weakly precompact
since, for any $\delta>0$, we may take $\ell\geq 1$ sufficiently large such that
$2^{-\ell}<\delta$. In this case, $\theta(K_\ell)\geq 1-2^{-\ell}\geq 1-\delta$
for all $\theta\in\Theta$, so $\Theta$ is compact. Then, $\forall\:n\geq 1$, the
Markov estimate above yields:
\begin{align*}
  P(\lambda^{1:M}_n\in\Theta)=1-P(\exists\:\ell\geq 1\;\text{s.t.}\;\lambda^{1:M}_n(K^c_\ell)>2^{-\ell})\geq 1-\sum_{\ell\geq 1}P(\lambda^{1:M}_n(K^c_\ell)>2^{-\ell})\geq 1-\varepsilon
\end{align*}
such that the laws $\{\mc{L}(\lambda^{1:M}_n)\}_{n\geq 1}$ are tight on
$\mc{P}(\prod_{j\in V}\mbb{Y}^j)$.\hfill{$\qed$}

\begin{theorem}\label{thm12} Let
  $Y=(Y^{\mc{C}^\infty_1},Y^{\mc{C}^\infty_2},\dots,Y^{\mc{C}^\infty_M})$ be
  C-Ex, where $Y^i$ is $\mbb{Y}^{C(i)}$-valued for all $i\in\mbb{N}$, directed
  by $\lambda^{1:M}$. Suppose either:
  \begin{enumerate}[(i)]
    \item
    $Y_n=(Y^{\mc{C}^\infty_1}_n,Y^{\mc{C}^\infty_2}_n,\dots,Y^{\mc{C}^\infty_M}_n)$
    is C-Ex and directed by $\lambda^{1:M}_n$, or
    \item
    $Y_n=(Y^{\mc{C}^{N_n}_1}_n,Y^{\mc{C}^{N_n}_2}_{n},\dots,Y^{\mc{C}^{N_n}_M}_n)$
    is C-Ex$^N$ with empirical measures
    $\lambda^{1:M}_n:=\lambda^{1:M}[Y^{1:N_n}_n]$ and where
    $N_n\rightarrow\infty$ as $n\rightarrow\infty$.
  \end{enumerate}
  Then $\mc{L}(Y_n)\rightarrow \mc{L}(Y)$ along any finite marginal as
  $n\rightarrow\infty$ if and only if
  $\mc{L}(\lambda^{1:M}_n)\rightarrow\mc{L}(\lambda^{1:M})$ as
  $n\rightarrow\infty$.
\end{theorem}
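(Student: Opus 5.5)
We follow the proof of \cite[Proposition 7.20]{Aldous_Ibragimov_1985}, reducing everything to convergence of mixed moments. The basic tool is the family of test functions of product form. For each $j\in V$ fix $k_j\geq 1$ and $g^j_1,\dots,g^j_{k_j}\in C_b(\mbb{Y}^j)$, and let $\mc{C}'_j$ be the first $k_j$ indices of cluster $j$. We consider
\begin{align*}
  G(Y):=\prod_{j\in V}\prod_{m=1}^{k_j}g^j_m(Y^{k^j_m}),\qquad \Phi(\lambda^{1:M}):=\prod_{j\in V}\prod_{m=1}^{k_j}\int g^j_m\,d\lambda^j .
\end{align*}
In the C-Ex setting, Proposition \ref{prop4} gives $\E[G(Y)]=\E[\Phi(\lambda^{1:M})]$, and the same identity holds for $Y_n$ in case (i). In case (ii) we compare $\E[G(Y_n)]$ with $\E[\Phi(\lambda^{1:M}_n)]$ directly. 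Expanding $\Phi(\lambda^{1:M}_n)$ gives an average of $G$ over all index maps from $\mc{C}'_j$ into $\mc{C}^{N_n}_j$. By C-Ex$^N$ invariance, the injective maps all contribute $\E[G(Y_n)]$. The non-injective maps have proportion at most $\sum_j k_j^2/N^{N_n}_j$, which tends to $0$ because $N_n\to\infty$ and Assumption \ref{ass1}(iv) (or simply $N_j^{N_n}\to\infty$) holds. Hence $|\E[G(Y_n)]-\E[\Phi(\lambda^{1:M}_n)]|\leq \prod\|g\|_\infty\sum_j k_j^2/N_j^{N_n}\to 0$. Alternatively, Theorem \ref{thm8} gives this directly.

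For ($\Leftarrow$), suppose $\mc{L}(\lambda^{1:M}_n)\to\mc{L}(\lambda^{1:M})$. The map $\Phi$ is bounded and weakly continuous on $\mc{P}(\prod_j\mbb{Y}^j)$, restricted to product measures. So $\E[\Phi(\lambda^{1:M}_n)]\to\E[\Phi(\lambda^{1:M})]$, and by the identity above $\E[G(Y_n)]\to\E[G(Y)]$. It remains to pass from product test functions to all bounded continuous functions on a finite marginal. Convergence-determining is the cleanest route here. First, tightness of the finite marginals of $\mc{L}(Y_n)$ follows because their one-dimensional marginals are $\E[\lambda^j_n]$, and these converge. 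Then, since products of $C_b$ functions separate measures on products of Polish spaces, a tight sequence whose product moments converge has a unique limit. On compact spaces Stone--Weierstrass gives this immediately, as in the discussion preceding Lemma \ref{lem8}.

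For ($\Rightarrow$), suppose the finite marginals of $\mc{L}(Y_n)$ converge to those of $\mc{L}(Y)$. The one-dimensional marginals are then tight, so $\E[\lambda^{1:M}_n]$, whose $j$-th factor is the law of a single cluster-$j$ coordinate, is tight on $\prod_j\mbb{Y}^j$. Strictly, $\E[\lambda^{1:M}_n]$ is the mean of a product, not the product of means. However, a set of the form $\prod_j K^j$ controls it, because $\lambda^{1:M}(\prod_j K^j)\geq 1-\sum_j\lambda^j((K^j)^c)$. Lemma \ref{lem12} then gives tightness of $\{\mc{L}(\lambda^{1:M}_n)\}$. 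Take any subsequential limit $\Lambda'$. It is supported on product measures, since that set is closed by \cite[lemma 1.1]{Parthasarathy_1967}. By the moment identities, $\E_{\Lambda'}[\Phi]=\lim\E[G(Y_n)]=\E[G(Y)]=\E[\Phi(\lambda^{1:M})]$ for every product test function.

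The main obstacle is then to show that the mixed moments $\E[\Phi(\lambda^{1:M})]$ determine $\mc{L}(\lambda^{1:M})$. We would argue as follows. By Proposition \ref{prop4}, these moments determine the law of a C-Ex sequence directed by a random measure with law $\Lambda'$. By uniqueness of the directing measure (Lemma \ref{lem8}: $\lambda^{1:M}$ is the a.s. limit of the empirical measures, a measurable functional of the sequence), the law of that sequence determines $\Lambda'$. So $\Lambda'=\mc{L}(\lambda^{1:M})$. Since every subsequence has a further subsequence with this limit, the full sequence converges. Lemma \ref{lem8} presumes the compactness of Assumption \ref{ass1}(i); in general one uses the a.s. convergence of Lemma \ref{lem10} coordinatewise instead.
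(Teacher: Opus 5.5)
Your proposal is correct and follows essentially the paper's route: the product-moment identity from Proposition \ref{prop4} with a convergence-determining product class for ($\Leftarrow$), tightness via Lemma \ref{lem12} and subsequence identification through uniqueness of the directing measure for ($\Rightarrow$), and, for case (ii), the sampling-with versus without-replacement comparison that underlies Theorem \ref{thm8}. The differences are minor: you use the continuous mapping theorem for $\Phi$ instead of a Skorohod coupling, and you spell out two points the paper glosses over, namely tightness of $\E[\lambda^{1:M}_n]$ via product compacts and the fact that the law of a C-Ex sequence determines the law of its directing measure.
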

\noindent\textbf{Proof}. Beginning with case (i), suppose
$\mc{L}(\lambda^{1:M}_n)\rightarrow\mc{L}(\lambda^{1:M})$ weakly in
$\mc{P}(\mc{P}(\prod_{j\in V}\mbb{Y}^j))$ as $n\rightarrow\infty$. Let
$\bar{\lambda}^{1:M}_n$ $\bar{\lambda}^{1:M}$ be Skorohod representations of
$\mc{L}(\lambda^{1:M}_n)$, $\mc{L}(\lambda^{1:M})$ $\forall\:n\geq 1$ such that
$\bar{\lambda}^{1:M}_n\rightarrow\bar{\lambda}^{1:M}$ $\as$ as
$n\rightarrow\infty$. Then the C-Ex processees $\bar{Y}_n$ and $\bar{Y}$
directed by $\bar{\lambda}^{1:M}_n$, $\bar{\lambda}^{1:M}$ respectively are such
that $\mc{L}(Y_n)=\mc{L}(\bar{Y}_n)$ and $\mc{L}(Y)=\mc{L}(\bar{Y})$
$\forall\:n\geq 1$. Define the sub-$\sigma$-algebras
$\mc{F}_n:=\sigma(\lambda^{1:M}_n)$ and $\mc{F}:=\sigma(\mu^j:j\in V)$
$\forall\:n\geq 1$. Then, fixing $r\geq 1$ and $f_k\in C_b(\mbb{Y}^{C(k)})$ for
$k\leq r$, we have
\begin{align}
    \E[\prod_{k\leq r}f_k(\bar{Y}^k_n)]&=\E[\prod_{k\leq r}\E[f_k(\bar{Y}^k_n)|\mc{F}_n]]\notag\\
    &=\E[\prod_{k\leq r}\int_{\mbb{Y}^{C(k)}}f_k(y)\lambda^{C(k)}_n(dy)]\overset{n\rightarrow\infty}{\longrightarrow}\E[\prod_{k\leq r}\int_{\mbb{Y}^{C(k)}}f_k(y)\lambda^{C(k)}(dy)]
    =\E[\prod_{k\leq r}f_k(\bar{Y}^k)].\notag
\end{align}
To be precise, the first equality follows iterated expectations, proposition
\ref{prop4} (namely, that $\{\bar{Y}^k_n\}_{k\geq 1}$ are independent given
$\mc{F}_n$) and Fubini-Tonelli. The limit follows dominated convergence in
conjunction with the $\as$ convergence of $\lambda^{1:M}_n$. This class of
functionals characterizes weak convergence of the finite marginals
$\mc{L}(\bar{Y}^1_n,\bar{Y}^2_n,\dots,\bar{Y}^r_n)\rightarrow\mc{L}(\bar{Y}^1,\bar{Y}^2,\dots,\bar{Y}^r)$
(e.g., \cite[theorem 4.29]{Kallenberg_2002}) for any $r\geq 1$, so
$\mc{L}(Y_n)=\mc{L}(\bar{Y}_n)\rightarrow\mc{L}(\bar{Y})=\mc{L}(Y)$ as
$n\rightarrow\infty$.\\[5pt]
\indent For the converse, suppose that $\mc{L}(Y_n){\rightarrow}\mc{L}(Y)$ as
$n\rightarrow\infty$. Then $\{\E[\lambda^{1:M}_n]\}_{n\geq
1}=\{\mc{L}(Y^1_n)\}_{n\geq 1}$ is tight, which, by lemma \ref{lem12}, implies
tightness of $\{\mc{L}(\lambda^{1:M}_n)\}_{n\geq 1}$ on $\mc{P}(\prod_{j\in
V}\mbb{Y}^j)$. Hence, for any subsequence, one can extract sub-subsequence
$\{\lambda^{1:M}_{n_k}\}_{k\geq 1}$ converging in distribution to some
$\lambda^{1:M,\ast}$, whereby the previous part we conclude
$\lambda^{1:M,\ast}=\lambda^{1:M}$. Thus,
$\mc{L}(\lambda^{1:M}_n)\rightarrow\mc{L}(\lambda^{1:M})$ as
$n\rightarrow\infty$.\\[5pt]
\indent For case (ii), construct a C-Ex extension $\tilde{Y}_n$ directed by
empirical measure $\lambda_n^{1:M}$ for $n\geq 1$ as per Theorem \ref{thm8}. By
the same result, it follows that $\mc{L}(Y_n)\rightarrow\mc{L}(Y)$ as
$n\rightarrow\infty$ if and only if $\mc{L}(\tilde{Y}_n)\rightarrow\mc{L}(Y)$ as
$n\rightarrow\infty$, which in turn occurs if and only if
$\mc{L}(\lambda^{1:M}_n)\rightarrow\mc{L}(\lambda^{1:M})$ by case
(i).\hfill{$\qed$}\\[5pt]
\indent The so-called bottom-up convergence described by Theorems \ref{thm8} and \ref{thm12} is key to establishing our main resuts on the
structure/informational demands of optima in the following sections (see Theorems \ref{thm7} and
\ref{thm10} below).
\subsection{Structure of Optima for \texorpdfstring{$\mf{G}^\infty$}{TEXT}: Part I}\label{sec9}
We can now present our first main result on the structure of optima for
$\mf{G}^\infty$. As a preliminary result, the following lemma restates Corollary
\ref{cor4} and highlights the significance of Theorem \ref{thm6} by establishing
that a sequence of optimal, C-Ex$^N$ strategic measures converges
subsequentially to a C-Ex joint measure for the infinite ensemble.
\begin{lemma}\label{lem5} Let assumption \ref{ass1} hold, and take
  $\pi^N\in\Pi^{N,T}_{\cex}$ such that
  $Q_N:=P^{\pi^N}_{\nu_0}\in\mc{P}(\prod_{t=0}^{T-1}\ess\times\eas)$ is C-Ex$^N$
  $\forall\:N\geq M$. Then
  $\exists\:Q\in\mc{P}(\prod_{t=0}^{T-1}\bar{\mbb{X}}\times\bar{\mbb{U}})$ C-Ex
  such that
  \begin{align*}
    Q_{N_\ell}\big|_{k}\overset{\ell\rightarrow\infty}{\longrightarrow}Q\big|_{k}\quad\forall\:k\geq M
  \end{align*}
  for a subsequence $\{N_\ell\}_{\ell\geq 1}$, where $Q_{N_\ell}\big|_k$,
  $Q\big|_k$ denote the respective marginals over the $k$-member ensemble.
\end{lemma}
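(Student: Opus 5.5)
The plan is to reduce Lemma \ref{lem5} to Corollary \ref{cor4} by regarding each agent's whole trajectory as a single cluster-indexed random element. For $i\in\mbb{N}$ set $\mbb{Y}^{C(i)}:=\prod_{t=0}^{T-1}(\mbb{X}^{C(i)}\times\mbb{U}^{C(i)})$ and $Y^i:=(x^i_t,u^i_t)_{0\leq t<T}$. This gives the identifications
\[
\prod_{t=0}^{T-1}\ess\times\eas\cong\prod_{i\in\mc{N}}\mbb{Y}^{C(i)},
\qquad
\prod_{t=0}^{T-1}\bar{\mbb{X}}\times\bar{\mbb{U}}\cong\prod_{i\in\mbb{N}}\mbb{Y}^{C(i)}.
\]
Under this coordinate rearrangement (a homeomorphism of product spaces), $Q_N$ becomes a measure on $\prod_{i\in\mc{N}}\mbb{Y}^{C(i)}$. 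The $k$-member marginals correspond as well: restricting to agents $1,\dots,k$ over all $T$ time steps is exactly the marginal on $Y^{1:k}$.

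Next, I check the hypotheses of Corollary \ref{cor4}.

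\textbf{Compactness.} By Assumption \ref{ass1}(i), each $\mbb{X}^j$ and $\mbb{U}^j$ is compact. Since $T<\infty$, each $\mbb{Y}^j$ is a finite product of compact spaces and hence compact.

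\textbf{Cluster-exchangeability.} A cluster permutation $\sigma\in\mc{S}_{\mc{C}^N}$ acts on the rearranged space by permuting the $Y^i$, and this is the same as applying $\sigma$ simultaneously at every time $t$ to $(x^{1:N}_t,u^{1:N}_t)$. So the requirement is that the joint law of the whole trajectory be invariant under simultaneous cluster permutation at all times, which is the assumed C-Ex$^N$ property of $Q_N$.

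If one wanted to derive this from $\pi^N\in\Pi^{N,T}_{\cex}$ rather than assume it, the argument is an induction on $t$. The base case is that $\nu_0$ is C-Ex$^N$. The inductive step uses that the kernel $\mc{T}^N$ is cluster-permutation invariant and that $\pi^N_t$ satisfies (\ref{eq67}). Together these give
\[
P^{\pi^N}_{\nu_0}\circ\sigma^{-1}=P^{\pi^N}_{\nu_0}
\]
on the history space at each stage. Since the lemma states this as a hypothesis, I will only remark on it.

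Applying Corollary \ref{cor4} to $\{Q_N\}_{N\geq M}$ then yields a C-Ex $Q$ on $\prod_{i\in\mbb{N}}\mbb{Y}^{C(i)}$ and a subsequence $N_\ell$ with $Q_{N_\ell}|_k\to Q|_k$ for every $k\geq M$. Transporting $Q$ back through the homeomorphism gives a measure on $\prod_{t=0}^{T-1}\bar{\mbb{X}}\times\bar{\mbb{U}}$ with the stated properties. Two facts make this step go through. Weak convergence is preserved because the identification is a homeomorphism. Cluster-exchangeability of $Q$ in the trajectory sense is precisely C-Ex of the $Y$-process.

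The main point needing care is that the subsequence is uniform in $k$. This is already built into Corollary \ref{cor4}: the extensions $\bar{Q}^N$ live in the single compact space $\mc{P}(\prod_{i\in\mbb{N}}\mbb{Y}^{C(i)})$, so one subsequence works for all finite marginals at once. If a diagonal argument were preferred instead, one could extract successive subsequences for $k=M,M+1,\dots$ and take the diagonal. The limits are then consistent by uniqueness of limits of marginals, and Kolmogorov extension on standard Borel spaces assembles them into $Q$. Its C-Ex property would follow because invariance under cluster permutations of the first $k$ agents passes to weak limits, since each permutation is a continuous map.
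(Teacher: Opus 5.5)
Your proposal is correct and follows the paper's own route: the paper proves this lemma simply by pointing to Corollary~\ref{cor4}, and your regrouping of each agent's trajectory $(x^i_t,u^i_t)_{0\le t<T}$ into one coordinate of the compact space $\prod_{t=0}^{T-1}(\mbb{X}^{C(i)}\times\mbb{U}^{C(i)})$ spells out the reduction the paper leaves implicit. Your diagonal/Kolmogorov side remark is also sound and more elementary, because invariance of $Q_N|_k$ under cluster permutations of the first $k$ agents passes directly to weak limits, so it yields a C-Ex limit without the total-variation bound of Theorem~\ref{thm8}.
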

\noindent\textbf{Proof}. The proof is precisely that of Corollary \ref{cor4}.\hfill{$\qed$}\\[5pt]
\indent An analogous result holds for the infinite horizon case. To see this,
note that for $N\geq M$ fixed, the C-Ex$^N$ random variables in question are
$\prod_{t=0}^{T-1}\ess\times\eas$-valued, but the case of $\prod_{t\geq
0}\ess\times\eas$-valued random variables has no bearing on the application of
Theorem \ref{thm8}.\\[5pt]
\indent Lemma \ref{lem5} along with our preceding discussion is sufficient to
establish the optimality of C-Ex strategic measures for $\mf{G}^\infty$.
\begin{theorem}\label{thm7} Let assumption \ref{ass1} hold. Then
  \begin{align}
    \inf_{\pi\in\bar{\Pi}^{T}}J_T(\pi,\nu_0)=\inf_{\pi\in\Pi^{T}_{\cex}}J_T(\pi,\nu_0),\quad\text{and}\quad \inf_{\pi\in\bar{\Pi}}J(\pi,\nu_0)=\inf_{\pi\in\Pi_{\cex}}J(\pi,\nu_0).
  \end{align}
\end{theorem}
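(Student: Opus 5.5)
The inequality ``$\leq$'' in both identities is immediate, since $\Pi^T_{\cex}\subset\bar{\Pi}^T$ and $\Pi_{\cex}\subset\bar{\Pi}$. The work is the reverse inequality. I would first treat $\mf{G}^\infty_T$ by sandwiching $\inf_{\bar{\Pi}^T}J_T$ between the finite-population optimal values $J^{N,\ast}_T(\nu_0)$ and the cost of a C-Ex policy obtained as a limit point. The infinite horizon is then handled by truncation.

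\textbf{Step 1 (lower bound by finite optima).} Fix any $\pi\in\bar{\Pi}^T$ and consider the restricted strategic measure $P^\pi_{\nu_0}|_N$. Because $\mc{T}^N$ in (\ref{eq3}) lets the first $N$ agents evolve using only their own state-action pairs and their cluster empirical measures, this restriction is consistent with the dynamics of $\mf{G}^N_T$. It is therefore the strategic measure of a randomized centralized policy, possibly using common randomness, which may be absorbed into $\bar{\Pi}^{N,T}$ with no gain by the usual convexity argument. Hence $J^N_T(\pi|_N,\nu_0)\geq J^{N,\ast}_T(\nu_0)$ for every $N$, and so
\begin{align*}
J_T(\pi,\nu_0)\geq\limsup_{N\rightarrow\infty}J^{N,\ast}_T(\nu_0).
\end{align*}
Taking the infimum over $\pi$ gives $\inf_{\bar{\Pi}^T}J_T\geq\limsup_N J^{N,\ast}_T(\nu_0)$.

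\textbf{Step 2 (C-Ex optimizers and a limit point).} For each $N$, Theorem \ref{thm5} supplies an optimal Markov policy for $\mf{G}^N_T$, and Theorem \ref{thm6} uniformizes it into some $\pi^N\in\Pi^{N,T}_{\cex}$ with the same cost $J^{N,\ast}_T(\nu_0)$. Its strategic measure $Q_N$ is then C-Ex$^N$. By Lemma \ref{lem5}, there is a subsequence $N_\ell$ and a C-Ex measure $Q$ on $\prod_{t<T}\bar{\mbb{X}}\times\bar{\mbb{U}}$ such that $Q_{N_\ell}|_k\rightarrow Q|_k$ for all $k$. By Theorem \ref{thm12}(ii), applied to the state-action array at each time (and jointly over $t<T$), the cluster empirical measures $\mu^{1:M}[(x^{1:N_\ell}_t,u^{1:N_\ell}_t)]$ converge in law to the directing measures $\theta^{1:M}_t$ of $Q$. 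Their $\mbb{X}$-marginals converge in law to the state directing measures $\tilde{\mu}^{1:M}_t$.

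\textbf{Step 3 (admissibility of $Q$ --- the main obstacle).} I must show that $Q$ is the strategic measure of some $\pi\in\Pi^T_{\cex}$ for $\mf{G}^\infty_T$. The initial marginal is preserved, and the conditional law of the actions given the history can be disintegrated from $Q$; C-Ex of $Q$ makes this policy C-Ex. The delicate point is the transitions: conditionally on the past, $x^{1:\infty}_{t+1}$ must be distributed as $\prod_j\prod_{i\in\mc{C}_j}\mc{T}_j(\cdot|x^i_t,u^i_t,\tilde{\mu}^{1:M}_t)$. I would test this against products $\prod_{k\leq r}g_k(x^k_{t+1})\,h(\text{past on the first }r\text{ agents})$. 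In the prelimit, these expectations equal integrals of $\int g_k\,d\mc{T}_{C(k)}(\cdot|x^k_t,u^k_t,\mu^{1:M}_t)$. Weak continuity of $\mc{T}_j$, which follows from Assumption \ref{ass1}(ii) and compactness, together with the joint convergence in law of (finite marginals, empirical measures) from Step 2, passes these identities to the limit with $\mu^{1:M}_t$ replaced by $\tilde{\mu}^{1:M}_t$. I expect that establishing this \emph{joint} convergence cleanly, via Skorohod representation and a Theorem \ref{thm8}-type coupling, is the hardest part.

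\textbf{Step 4 (cost convergence and conclusion).} Under $Q$, Lemma \ref{lem8} gives $\mu^{1:M}[(x^{1:N}_t,u^{1:N}_t)]\rightarrow\theta^{1:M}_t$ a.s. Lemma \ref{lem1}, continuity and boundedness of $c^j$, and dominated convergence then give
\begin{align*}
J_T(\pi,\nu_0)=\E_Q\Big[\sum_{t<T}\beta^t\wh{c}(\tilde{\mu}^{1:M}_t,\theta^{1:M}_t)\Big].
\end{align*}
On the other hand, $J^{N_\ell,\ast}_T(\nu_0)=\E_{Q_{N_\ell}}\big[\sum_t\beta^t\wh{c}(\mu^{1:M}_t,\theta^{1:M}_{N_\ell,t})\big]$, and this converges to the same quantity by the convergence in law from Step 2 and continuity of $\wh{c}$. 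Combining with Step 1,
\begin{align*}
\inf_{\bar{\Pi}^T}J_T\geq\limsup_N J^{N,\ast}_T\geq\lim_\ell J^{N_\ell,\ast}_T=J_T(\pi,\nu_0)\geq\inf_{\Pi^T_{\cex}}J_T.
\end{align*}
For the infinite horizon, I would run the same argument on $\prod_{t\geq0}$, using compactness of the product space and the remark following Lemma \ref{lem5}. The discounted tail beyond $T$ is bounded by $\beta^T\|c\|_\infty/(1-\beta)$, uniformly in $N$, which justifies interchanging limits.
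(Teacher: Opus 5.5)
Your proposal follows the paper's proof essentially step for step: the same lower bound through $\limsup_N J^{N,\ast}_T(\nu_0)$, C-Ex$^N$ optimizers from Theorem \ref{thm6}, a C-Ex subsequential limit via Lemma \ref{lem5} and Theorem \ref{thm12}(ii), and identification of the limiting cost via Lemma \ref{lem8} and continuity of $\wh{c}$. For the infinite horizon, your uniform tail bound simply replaces the paper's monotone-convergence/Fatou step.

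Your Step 3 is more careful than the paper, which merely asserts that the limit $\wt{P}$ is induced by a C-Ex policy. One caveat: your Step 1 justification is in tension with the transitions you impose in Step 3. Step 1 claims that restrictions $P^\pi_{\nu_0}|_N$ evolve under $\mc{T}^N$ with the first-$N$ empirical measures, whereas Step 3 drives the transitions by $\tilde{\mu}^{1:M}_t$. The paper shares this modeling ambiguity, since it takes the same lower bound for granted.
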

\noindent{\bf Proof}. Consider the finite horizon problem $\mf{G}^\infty_T$
first. By interchanging the limit supremum and infimum we obtain the lower bound
\begin{align}
  J^\ast_T(\nu_0)=\inf_{\pi\in\bar{\Pi}^T}\limsup_{N\rightarrow\infty}J^N_T(\pi\big|_N,\nu_0)&\geq\limsup_{N\rightarrow\infty}\inf_{\pi\in\bar{\Pi}^T}J^N_T(\pi\big|_N,\nu_0)\notag\\
  &=\limsup_{N\rightarrow\infty}J^{N,\ast}_T(\nu_0)\notag\\
  &\geq\liminf_{N\rightarrow\infty}J^{N,\ast}_T(\nu_0)\label{eq135}
\end{align}
whence we may extract a subsequence $\{N_\ell\}_{\ell\geq 1}$ converging to the
limit infimum in (\ref{eq135}). That is,
\[\lim_{\ell\rightarrow\infty}J^{N_\ell,\ast}_T(\nu_0)=\liminf_{N\rightarrow\infty}J^{N,\ast}_T(\nu_0).\]
Now, by Theorem \ref{thm6}, each value function in the lower bound (\ref{eq135})
is realized by a policy $\pi^{N,\ast}\in\Pi^{N,T}_{\cex}$ rendering the
resulting strategic measure
$P^{\pi^{N,\ast}}_{\nu_0}\in\mc{P}(\prod_{t=0}^{T-1}\ess\times\eas)$ C-Ex$^N$.
Thus, we may invoke Lemma \ref{lem5} to extract a further subsequence
(which we will still call $\{N_\ell\}_{\ell\geq 1}$) such that strategic measures
$\{P^{\pi^{N_\ell,\ast}}_{\nu_0}\}_{\ell\geq 1}$ converge weakly to a C-Ex
measure $\wt{P}\in\mc{P}(\prod_{t=0}^{T-1}\bar{\mbb{X}}\times\bar{\mbb{U}})$
along any finite marginal. Subsequently, Theorem \ref{thm12} case (ii) implies
the convergence in law of the empirical measures to those of a collection of
directing measures governing the infinite ensemble. To illustrate, we continue
from (\ref{eq135})
\begin{align}
  J^\ast_T(\nu_0)&\geq\liminf_{\ell\rightarrow\infty}J^{N_\ell}_T(\pi^{N_\ell,\ast},\nu_0)\label{eq152}\\
  &=\liminf_{\ell\rightarrow\infty}\int_{\prod_{t=0}^{T-1}\bar{\mbb{X}}^{N_\ell}\times\bar{\mbb{U}}^{N_\ell}}\sum_{t=0}^{T-1}\beta^tc^{N_\ell}(x^{1:N_\ell}_t,u^{1:N_\ell}_t)P^{\pi^{N_\ell,\ast}}_{\nu_0}(dx^{1:N_\ell}_{0:T-1}\times du^{1:N_\ell}_{0:T-1})\notag\\   
  &=\liminf_{\ell\rightarrow\infty}\int_{\prod_{t=0}^{T-1}\bar{\mbb{X}}^{N_\ell}\times\bar{\mbb{U}}^{N_\ell}}\sum_{t=0}^{T-1}\beta^t\wh{c}(\mu^{1:M}[x^{1:N_\ell}_t],\mu^{1:M}[(x^{1:N_\ell}_t,u^{1:N_\ell}_t)])P^{\pi^{N_\ell,\ast}}_{\nu_0}(dx^{1:N_\ell}_{0:T-1}\times du^{1:N_\ell}_{0:T-1})\notag\\
  &=\liminf_{\ell\rightarrow\infty}\int_{\prod_{t=0}^{T-1}\wh{\mbb{X}}^{N_\ell}\times\wh{\mbb{U}}^{N_\ell}}\sum_{t=0}^{T-1}\beta^t\wh{c}(\mu_t,\theta_t)Q^{\pi^{N_\ell,\ast}}_{\nu_0}(d\mu_{0:T-1}\times d\theta_{0:T-1})\label{eq149}
\end{align}
where $Q^{\pi^{N_\ell,\ast}}_{\nu_0}$ in (\ref{eq149}) is the strategic measure
induced on the empirical measure profile by $\pi^{N_\ell,\ast}$. Further, define
$Q^{N_\ell}_t$ to be the marginal distribution of
$(\mu^{1:M}[x^{1:N_\ell}_t],\mu^{1:M}[(x^{1:N_\ell}_t,u^{1:N_\ell}_t)])$ induced
by $\pi^{N_\ell,\ast}$ $\forall\:\ell\geq 1$ and $0\leq t<T$. Then by Theorem
\ref{thm12} we have $Q^{N_\ell}_t\rightarrow \wt{Q}_t$ as
$\ell\rightarrow\infty$, where $\wt{Q}_t$ is the law of the directing measure of
the subsequential limit ensemble at time $0\leq t<T$. Moreover,
\begin{align}
  J^\ast_T(\nu_0)&\geq\liminf_{\ell\rightarrow\infty}\sum_{t=0}^{T-1}\beta^t\int_{\wh{\mbb{X}}^{N_\ell}\times\wh{\mbb{U}}^{N_\ell}}\wh{c}(\mu,\theta)Q^{N_\ell}_t(d\mu\times d\theta)\notag\\
  &\geq\sum_{t=0}^{T-1}\beta^t\int_{\mc{P}(\prod_{j\in V}\mbb{X}^j)\times\mc{P}(\prod_{j\in V}\mbb{X}^j\times\mbb{U}^j)}\wh{c}(\mu,\theta)\wt{Q}_t(d\mu\times d\theta)\label{eq150}\\
  &=\int_{\prod_{t=0}^{T-1}(\mc{P}(\prod_{j\in V}\mbb{X}^j)\times\mc{P}(\prod_{j\in V}\mbb{X}^j\times\mbb{U}^j))}\sum_{t=0}^{T-1}\beta^t\wh{c}(\mu_t,\theta_t)\wt{Q}(d\mu_{0:T-1}\times d\theta_{0:T-1})\label{eq151}
\end{align}
where (\ref{eq150}) holds under Assumption \ref{ass1} since the stagewise costs
are jointly continuous and bounded, and we define $\wt{Q}$ to be the joint
strategic measure over the process of directing measures for the C-Ex limiting
ensemble. We emphasize that, as per the above computations, the cost depends on
the ensemble strategic measure only through that of the empirical distribution
process, and these converge along the chosen subsequence. Now, using Lemma
\ref{lem8}, repeating an identical sequence of steps as from
(\ref{eq152}-\ref{eq151}) establishes the convergence
\begin{align*}
  \limsup_{N\rightarrow\infty}J^N_T(\wt{P}\big|_N,\nu_0)&=\lim_{N\rightarrow\infty}\int_{\prod_{t=0}^{T-1}\ess\times\eas}\sum_{t=0}^{T-1}\beta^tc^N(x^{1:N}_t,u^{1:N}_t)\wt{P}\big|_N(dx^{1:N}_t\times du^{1:N}_t)\\
  &=\int_{\prod_{t=0}^{T-1}(\mc{P}(\prod_{j\in V}\mbb{X}^j)\times\mc{P}(\prod_{j\in V}\mbb{X}^j\times\mbb{U}^j))}\sum_{t=0}^{T-1}\beta^t\wh{c}(\mu_t,\theta_t)\wt{Q}(d\mu_{0:T-1}\times d\theta_{0:T-1}).
\end{align*} 
Consolidating our inequalities with this new expression for (\ref{eq151}), we find
\begin{align*}
  J^\ast_T(\nu_0)\geq\limsup_{N\rightarrow\infty}J^N_T(\wt{P}\big|_N,\nu_0)\geq\inf_{\pi\in\Pi^T_{\cex}}\limsup_{N\rightarrow\infty}J^N_T(\pi\big|_N,\nu_0)\geq J^\ast_T(\nu_0)
\end{align*}
where the second inequality follows by viewing $\wt{P}$ as being induced by a C-Ex policy, concluding the proof for $\mf{G}^\infty_T$. The case of $\mf{G}^\infty_\infty$ follows similarly, except in (\ref{eq150}) where the summation-integral interchange can be justified via montone convergence and the limit infimum is moved into the summation
via Fatou's lemma.\hfill{$\qed$}\\[5pt]
\indent A powerful consequence of Theorem \ref{thm7} is the applicability of the
De Finetti-type representation in Proposition \ref{prop4} to the infinite
ensemble without any loss of optimality. This justifies further
analysis/characterization of optima for $\mf{G}^\infty$ by considering a
mean-field MDP involving the directing measures governing a C-Ex infinite
ensemble. Accordingly, we now turn our attention to developing this mean-field MDP.
Afterward, we return to provide further structural results for $\mf{G}^\infty$
by studying how mean-field solutions can be taken to induce policies on the
infinite ensemble.

\subsection{Mean-Field MDP \texorpdfstring{$\wt{\mf{G}}$}{TEXT}}\label{sec10}
The De Finetti-type representation of C-Ex processes (Proposition \ref{prop4})
enables the development of a directing measure-valued `mean-field' MDP modeling
a controlled, but jointly C-Ex state-action profile for the infinite ensemble.
Moreover, our first main result, presented in Theorem \ref{thm7} establishes
that such a mean-field representation is applicable to $\mf{G}^\infty$ without
compromising operational performance of the ensemble. This will lead
to a characterization of the structure/informational demands of optima for
$\mf{G}^\infty$, along with a dynamic program for realizing them.\\[5pt]
\indent For this mean-field MDP, define the new state and action spaces
$\wt{\mbb{X}}:=\mc{P}(\prod_{j\in V}\mbb{X}^j)$,
$\wt{\mbb{U}}:=\mc{P}(\prod_{j\in V}\mbb{X}^j\times\mbb{U}^j)$, respectively.
Since the action is to represent a directing measure (in the sense of
proposition \ref{prop4}) over state-action pairs, we impose the fixed marginal
and factorizability action constraint given by
\begin{align}
  \wt{\mbb{U}}(\mu^{1:M}):=\bigg\{\theta\in\wt{\mbb{U}}\setsep\theta=\prod_{j\in V}\theta^j,\;\theta^j\in\mc{P}(\mbb{X}^j\times\mbb{U}^j),\;\theta^j(\cdot\times\mbb{U}^j)=\mu^j(\cdot)\;\forall\:j\in V\bigg\}.\label{eq137}
\end{align} 
\indent As for policies, let $\wt{\mbb{H}}_0:=\wt{\mbb{X}}$ and
$\wt{\mbb{H}}_t:=\wt{\mbb{H}}_{t-1}\times(\wt{\mbb{X}}\times\wt{\mbb{U}})$ for
$t\geq 1$ denote the (centralized) history spaces. Then an admissible policy at
time $t\geq 0$ is a stochastic kernel
$\tilde{\pi}_t\in\mc{P}(\wt{\mbb{U}}|\wt{\mbb{H}}_t)$ such that
$\tilde{\pi}_t(\wt{\mbb{U}}(\tilde{\mu}^{1:M}_t)|\wt{\mc{I}}_t)=1$ for all
$\wt{\mc{I}}_t=\{\tilde{\mu}^{1:M}_{0:t},\tilde{\theta}_{0:t-1}^{1:M}\}\in\wt{\mbb{H}}_t$.
Denote the set of these with $\wt{\Pi}_t$. Then the set of admissible policies
over the finite horizon is given by $\wt{\Pi}^T:=\prod_{t=0}^{T-1}\wt{\Pi}_t$,
and $\wt{\Pi}:=\prod_{t\geq 0}\wt{\Pi}_t$ for the infinite horizon.\\[5pt]
\indent In order to establish controlled, Markovian dynamics consistent with the original ensemble, and
ensure that the states remain factorizable (and hence a well-defined directing
measure in the sense of Definition \ref{def4}, compatible with the constraint
(\ref{eq137})) we have the following proposition.
\begin{proposition}\label{prop5} Suppose $(x^{1:\infty}_t,u^{1:\infty}_t)$ is
  C-Ex with state and state-action directing measures $\tilde{\mu}^{1:M}_t$,
  $\tilde{\theta}^{1:M}_t$ respectively $\forall\:t\geq 0$. Then
  $\forall\:A\in\mc{P}(\prod_{j\in V}\mbb{X}^j)$,
  \begin{align*}
    P(\tilde{\mu}^{1:M}_{t+1}\in A|\tilde{\mu}^{1:M}_{0:t},\tilde{\theta}^{1:M}_{0:t})=P(\tilde{\mu}^{1:M}_{t+1}\in A|\tilde{\mu}^{1:M}_t,\tilde{\theta}^{1:M}_t),\quad\forall\:t\geq 0.
  \end{align*}
  Moreover, these dynamics are deterministic such that
  $\tilde{\mu}^{1:M}_{t+1}=\wt{F}(\tilde{\mu}^{1:M}_t,\tilde{\theta}^{1:M}_t)$,
  where $\wt{F}=(\wt{F}_1,\wt{F}_2,\dots,\wt{F}_M)$ and
  $\wt{F}_j:\mc{P}(\prod_{\ell\in
  V}\mbb{X}^\ell)\times\mc{P}(\mbb{X}^j\times\mbb{U}^j)\rightarrow\mc{P}(\mbb{X}^j)$
  for all $t\geq 0$.
\end{proposition}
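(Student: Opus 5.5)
We will show directly that $\tilde{\mu}^{j}_{t+1}$ is a deterministic measurable function of $(\tilde{\mu}^{1:M}_t,\tilde{\theta}^j_t)$, namely
\begin{align*}
  \tilde{\mu}^j_{t+1}(B)=\int_{\mbb{X}^j\times\mbb{U}^j}\mc{T}_j(B|x,u,\tilde{\mu}^{1:M}_t)\,\tilde{\theta}^j_t(dx\times du)=:\wt{F}_j(\tilde{\mu}^{1:M}_t,\tilde{\theta}^j_t)(B),\quad B\in\mc{B}(\mbb{X}^j).
\end{align*}
Once this holds almost surely, the Markov property follows at once. The conditional law of $\tilde{\mu}^{1:M}_{t+1}$ given $(\tilde{\mu}^{1:M}_{0:t},\tilde{\theta}^{1:M}_{0:t})$ is then the Dirac mass at $\wt{F}(\tilde{\mu}^{1:M}_t,\tilde{\theta}^{1:M}_t)$, which is measurable with respect to $(\tilde{\mu}^{1:M}_t,\tilde{\theta}^{1:M}_t)$. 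Factorizability of the new state is automatic, since each coordinate is defined separately.

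The identification proceeds via empirical measures. By Lemma \ref{lem8}, applied to the C-Ex profile $(x^{1:\infty}_t,u^{1:\infty}_t)$, the cluster empirical measures satisfy $\mu^j[(x^{1:N}_t,u^{1:N}_t)]\to\tilde{\theta}^j_t$ a.s. Hence also $\mu^j[x^{1:N}_t]\to\tilde{\mu}^j_t$ a.s., which identifies $\tilde{\mu}^j_t$ with the $\mbb{X}^j$-marginal of $\tilde{\theta}^j_t$. Likewise $\mu^j[x^{1:N}_{t+1}]\to\tilde{\mu}^j_{t+1}$ a.s.

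Two points need care here. First, the dynamics use the interaction term $\mu^{1:M}_t$, which in the infinite population should be read as the a.s. limit $\tilde{\mu}^{1:M}_t$. We therefore take $x^i_{t+1}=f_j(x^i_t,u^i_t,\tilde{\mu}^{1:M}_t,w^i_t)$ for $i\in\mc{C}_j$. If the finite-$N$ empirical measure is used instead, continuity from Assumption \ref{ass1}(ii) together with compactness (uniform continuity on compacts) controls the discrepancy. Second, the disturbances $w^i_t$ are i.i.d. $\nu^j$ within clusters. They are independent of the past and, in particular, of $\mc{G}_t:=\sigma(x^{1:\infty}_{0:t},u^{1:\infty}_{0:t})$, which contains $\tilde{\theta}_t,\tilde{\mu}_t$.

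The key step is a conditional law of large numbers. Fix $g\in C_b(\mbb{X}^j)$ and set
\begin{align*}
  \Delta^N:=\frac{1}{N_j}\sum_{i\in\mc{C}^N_j}\Big(g(x^i_{t+1})-\int g\,d\mc{T}_j(\cdot|x^i_t,u^i_t,\tilde{\mu}^{1:M}_t)\Big).
\end{align*}
Conditioned on $\mc{G}_t$, the summands are independent, centered and bounded, because the $w^i_t$ are independent of $\mc{G}_t$. So $\E[(\Delta^N)^2]\le 4\|g\|_\infty^2/N_j$. A fourth-moment bound gives $O(N_j^{-2})$, and Borel–Cantelli then yields $\Delta^N\to0$ a.s., using Assumption \ref{ass1}(iv) to ensure $N_j\to\infty$.

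The second term of $\Delta^N$ equals $\int h\,d\mu^j[(x^{1:N}_t,u^{1:N}_t)]$, where $h(x,u)=\int g(f_j(x,u,\tilde{\mu}^{1:M}_t,w))\nu^j(dw)$. This $h$ is continuous in $(x,u)$ by Assumption \ref{ass1}(ii) and dominated convergence, so the term converges a.s. to $\int h\,d\tilde{\theta}^j_t$. Meanwhile the first term converges to $\int g\,d\tilde{\mu}^j_{t+1}$. Taking a countable convergence-determining family of $g$ (available since $\mbb{X}^j$ is compact metric) gives the displayed identity a.s., simultaneously for all $j\in V$.

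The main obstacle is this LLN step: handling the randomness of $\tilde{\mu}^{1:M}_t$ inside the kernel and justifying conditional independence of $\{x^i_{t+1}\}$ given $\mc{G}_t$ under an arbitrary (possibly centralized) C-Ex policy. The first issue is resolved by the $\mc{G}_t$-measurability of $\tilde{\mu}^{1:M}_t$, which is an a.s. limit of $\mc{G}_t$-measurable empirical measures. The second follows because the disturbances are independent of $\mc{G}_t$. Measurability of $\wt{F}_j$ in its arguments follows from the weak continuity of $(\mu,\theta)\mapsto\int\mc{T}_j(\cdot|x,u,\mu)\theta(dx\,du)$ under Assumption \ref{ass1}.
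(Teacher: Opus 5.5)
Your proof is correct, and it takes a different route from the paper.

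\textbf{The paper's route.} The paper fixes a single agent $i\in\mathcal{C}^\infty_j$ and works in the directing-measure filtration $\mathcal{H}_t:=\sigma(\tilde\mu^{1:M}_{0:t},\tilde\theta^{1:M}_{0:t})$. It conditions on $(x^i_{0:t},u^i_{0:t})$ and applies the controlled Markov property. It then uses that $\tilde\theta^j_t$, as a directing measure, is a conditional law of $(x^i_t,u^i_t)$ that makes it independent of the earlier directing measures. This gives $P(x^i_{t+1}\in A\mid\mathcal{H}_t)=\wt{F}_j(\tilde\mu^{1:M}_t,\tilde\theta^j_t)(A)$. The tower property then yields $\E[\tilde\mu^j_{t+1}(A)\mid\mathcal{H}_t]=\wt{F}_j(\tilde\mu^{1:M}_t,\tilde\theta^j_t)(A)$. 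From this the paper declares the conditional law of $\tilde\mu^j_{t+1}$ to be a Dirac mass.

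\textbf{Your route.} You condition on the whole ensemble history $\mathcal{G}_t$ and use that the time-$t$ disturbances are independent of it. You then run a conditional strong law (fourth moments plus Borel--Cantelli), together with Lemma~\ref{lem8} at times $t$ and $t+1$, to identify $\tilde\mu^j_{t+1}$ pathwise.

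\textbf{What each buys.} The paper's argument is shorter and never leaves the level of directing measures. However, its final step infers determinism from a first-moment identity. On its own, that identity does not rule out a genuinely random $\tilde\mu^j_{t+1}$ whose conditional mean is $\wt{F}_j$; some extra input, such as conditional independence of the next states, is needed there. Your law-of-large-numbers step supplies exactly that input. You therefore obtain the almost sure identity $\tilde\mu^j_{t+1}=\wt{F}_j(\tilde\mu^{1:M}_t,\tilde\theta^j_t)$ directly. The cost is that you must make explicit two modeling conventions the paper uses only tacitly:
\begin{enumerate}
\item independence of $w^{1:\infty}_t$ from $\sigma(x^{1:\infty}_{0:t},u^{1:\infty}_{0:t})$;
\item reading the interaction term in $\mf{G}^\infty$ as $\tilde\mu^{1:M}_t$, as the paper also does in its kernel $\mathcal{T}_j(\cdot|x,u,\tilde\mu^{1:M}_t)$.
\end{enumerate}
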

\noindent\textbf{Proof}. Let $j\in V$, $i\in\mc{C}^\infty_j$ and $A\in\mc{B}(\mbb{X}^j)$. Then,
\begin{align}
  P(x^i_{t+1}\in A|\tilde{\mu}^{1:M}_{0:t},\tilde{\theta}^{1:M}_{0:t})&=\int_{(\mbb{X}^j\times\mbb{U}^j)^t}P(x^i_{t+1}\in A|x,u,\tilde{\mu}^{1:M}_{0:t},\tilde{\theta}^{1:M}_{0:t})\mc{L}(x^i_{0:t},u^i_{0:t}|\tilde{\mu}^{1:M}_{0:t},\tilde{\theta}^{1:M}_{0:t})(dx\times du)\notag\\
  &=\int_{\mbb{X}^j\times\mbb{U}^j}\mc{T}_j(A|x,u,\tilde{\mu}^{1:M}_t)\mc{L}(x^i_t,u^i_t|\tilde{\mu}^{1:M}_{0:t},\tilde{\theta}^{1:M}_{0:t})(dx\times du)\label{eq145}\\
  &=\int_{\mbb{X}^j\times\mbb{U}^j}\mc{T}_j(A|x,u,\tilde{\mu}^{1:M}_t)\theta^j_t(dx\times du)\label{eq146}\\
  &=:\wt{F}_j(\tilde{\mu}^{1:M}_t,\tilde{\theta}^{j}_t)(A).\label{eq139}
\end{align}
In particular, (\ref{eq145}) follows the controlled Markov property for
individual agents, and that $\tilde{\mu}^{1:M}_t$ is the $\as$-limit of the
state empirical distributions by Lemma \ref{lem8}, and (\ref{eq146}) follows
since $\tilde{\theta}^j_t$ is the directing measure of $(x^i_t,u^i_t)$, implying
they are conditionally independent of
$(\tilde{\mu}^{1:M}_{0:t-1},\tilde{\theta}^{1:M}_{0:t-1})$ given
$\tilde{\theta}^j_t$ (see \cite[lemma
2.12]{Aldous_Ibragimov_1985}). By the chain rule, this implies
\begin{align*}
  \wt{F}_j(\tilde{\mu}^{1:M}_t,\tilde{\theta}^j_t)(A)=\int_{\mc{P}(\mbb{X}^j)}\tilde{\mu}^j_{t+1}(A)P(d\tilde{\mu}^j_{t+1}|\tilde{\mu}^{1:M}_{0:t},\tilde{\theta}^{1:M}_{0:t})
\end{align*}
so the conditional law on the right hand side must be a Dirac mass at the point $\wt{F}_j(\tilde{\mu}^{1:M}_t,\tilde{\theta}^j_t)$.\hfill{$\qed$}\\[5pt]
\indent By Proposition \ref{prop5}, the state directing measures admit controlled Markovian dynamics
such that
$\tilde{\mu}^j_{t+1}=\wt{F}_j(\tilde{\mu}^{1:M}_t,\tilde{\theta}^{1:M}_t)$
$\forall\:t\geq 0$, whence we set the initialization
$\tilde{\mu}^{1:M}_0=\nu_0$. Accordingly, the state-action profile for the
mean-field $\{(\tilde{\mu}^{1:M}_t,\tilde{\theta}^{1:M}_t)\}_{t\geq 0}$ defines
a MDP evolving under the joint state transition function
$\wt{F}:=(\wt{F}_1,\wt{F}_2,\dots,\wt{F}_M)$.\\[5pt]
\indent As for the cost, we consider either the finite or infinite horizon
criteria given by
\begin{align}
  \wt{J}_T(\pi,\nu_0)&:=\E^\pi_{\nu_0}\bigg[\sum_{t=0}^{T-1}\beta^t\wh{c}(\tilde{\mu}^{1:M}_t,\tilde{\theta}^{1:M}_t)\bigg],\quad\forall\:\pi\in\wt{\Pi}^T,\\
  \wt{J}(\pi,\nu_0)&:=\E^\pi_{\nu_0}\bigg[\sum_{t\geq 0}\beta^t\wh{c}(\tilde{\mu}^{1:M}_t,\tilde{\theta}^{1:M}_t)\bigg],\quad\forall\:\pi\in\wt{\Pi}.
\end{align}
In particular, the stagewise cost
$\wh{c}:\wt{\mbb{X}}\times\wt{\mbb{U}}\rightarrow\mbb{R}_+$ used here is exactly
that used for the finite MDPs $\wh{\mf{G}}^N$. We refer to the resulting
mean-field MDP with $\wt{\mf{G}}_T$ for the finite horizon, $\wt{\mf{G}}_\infty$
for the infinite horizon, or with $\wt{\mf{G}}$ to refer to either
interchangeably. For $\wt{\mf{G}}_T$ we seek an optimal policy
$\pi^\ast\in\wt{\Pi}^T$ satisfying the global optimality condition
\begin{align}
  \wt{J}_T(\pi^\ast,\nu_0)=\wt{J}^\ast_T(\nu_0):=\inf_{\pi\in\wt{\Pi}^T}\wt{J}_T(\pi,\nu_0)
\end{align}
with an analogous condition defining globally optimal policies and the value
function $\wt{J}^\ast$ for $\wt{\mf{G}}_\infty$. For our purposes, we require
the existence of optima for $\wt{\mf{G}}$, which we establish via the measurable
selection conditions encapsulated in the following lemma.
\begin{lemma}\label{lem11} Under assumption \ref{ass1}, $\wt{\mbb{X}}$,
  $\wt{\mbb{U}}$ are compact,
  $\tilde{\mu}^{1:M}\mapsto\wt{\mbb{U}}(\tilde{\mu}^{1:M})$ is a compact-valued,
  upper semicontinuous multifunction, and both $\wt{F}$, $\wh{c}$ are jointly
  continuous, with $\wh{c}$ additionally bounded.
\end{lemma}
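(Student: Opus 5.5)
We prove the four claims of Lemma \ref{lem11} in turn, using only compactness of each $\mbb{X}^j,\mbb{U}^j$ from Assumption \ref{ass1}(i) and continuity of $f^j$ and $c^j$ from Assumption \ref{ass1}(ii)--(iii).

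\textbf{Compactness.} Since $\prod_{j\in V}\mbb{X}^j$ and $\prod_{j\in V}\mbb{X}^j\times\mbb{U}^j$ are compact metric spaces, Prokhorov's theorem gives that $\wt{\mbb{X}}$ and $\wt{\mbb{U}}$ are compact in the weak topology.

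\textbf{The constraint multifunction.} We show that the graph $\{(\mu,\theta):\theta\in\wt{\mbb{U}}(\mu)\}$ is closed in $\wt{\mbb{X}}\times\wt{\mbb{U}}$. Take $\mu_n\to\mu$ and $\theta_n\to\theta$ with $\theta_n=\prod_j\theta^j_n$ and $\theta^j_n(\cdot\times\mbb{U}^j)=\mu^j_n$. By the Stone--Weierstrass remark preceding Lemma \ref{lem8}, weak convergence of product measures on compact spaces is equivalent to coordinatewise convergence. Passing to subsequences, $\theta^j_n\to\theta^j$ for each $j$, and the limit is the product $\theta=\prod_j\theta^j$; factorizable measures form a closed set by \cite[lemma 1.1]{Parthasarathy_1967}. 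Marginalization is weakly continuous, so $\theta^j(\cdot\times\mbb{U}^j)=\lim\mu^j_n=\mu^j$. A closed graph in a compact space yields compact values and upper semicontinuity. Nonemptiness follows by taking $\theta^j=\mu^j\otimes\delta_{u}$ for a fixed $u\in\mbb{U}^j$.

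\textbf{Continuity of $\wh{c}$.} $\wh{c}$ is bounded because each $c^j$ is bounded and $M$ is finite. For continuity, let $(\mu_n,\theta_n)\to(\mu,\theta)$ and split
\[
\Big|\int c^j(\cdot,\mu_n)\,d\theta^j_n-\int c^j(\cdot,\mu)\,d\theta^j\Big|\le\sup_{x,u}|c^j(x,u,\mu_n)-c^j(x,u,\mu)|+\Big|\int c^j(\cdot,\mu)\,d\theta^j_n-\int c^j(\cdot,\mu)\,d\theta^j\Big|.
\]
The second term vanishes by weak convergence, since $c^j(\cdot,\cdot,\mu)\in C_b$. The first term vanishes because $c^j$ is uniformly continuous on the compact set $\mbb{X}^j\times\mbb{U}^j\times\mc{P}(\mbb{X}^1)\times\cdots\times\mc{P}(\mbb{X}^M)$, and $\mu^\ell_n\to\mu^\ell$ for each $\ell$.

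\textbf{Continuity of $\wt{F}$ (main step).} By (\ref{eq139}), $\wt{F}_j(\mu,\theta^j)=\int\mc{T}_j(\cdot|x,u,\mu)\,\theta^j(dx\times du)$. For $g\in C_b(\mbb{X}^j)$ this gives
\[
\int g\,d\wt{F}_j(\mu,\theta^j)=\int\!\!\int g\big(f_j(x,u,\mu,w)\big)\,\nu^j(dw)\,\theta^j(dx\times du).
\]
Set $h_\mu(x,u):=\int g(f_j(x,u,\mu,w))\,\nu^j(dw)$. For each fixed $w$, Assumption \ref{ass1}(ii) makes $g\circ f_j(\cdot,\cdot,\cdot,w)$ continuous, hence uniformly continuous on the compact domain. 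Dominated convergence then shows $\sup_{x,u}|h_{\mu_n}(x,u)-h_\mu(x,u)|\to0$ as $\mu_n\to\mu$: for fixed $w$ the supremum over $(x,u)$ tends to zero by uniform continuity, and everything is bounded by $2\|g\|_\infty$. It also shows $h_\mu$ is continuous. The same two-term split as for $\wh{c}$ then gives $\int h_{\mu_n}\,d\theta^j_n\to\int h_\mu\,d\theta^j$. This weak continuity of $\mc{T}_j$ is exactly what Theorem \ref{thm2} establishes for the finite model, so we expect to reuse that argument. Finally, joint continuity of $\wt{F}=(\wt{F}_1,\dots,\wt{F}_M)$ into $\wt{\mbb{X}}$ follows coordinatewise, again by the Stone--Weierstrass product characterization.
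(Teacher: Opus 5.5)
Your proposal is correct and follows the same outline as the paper's proof. Compactness comes from Prokhorov, the constraint is closed under weak limits because factorization and marginals are preserved, and $\wt{F}$ is handled by showing $\int h_n\,d\tilde{\theta}^j_n\to\int h\,d\tilde{\theta}^j$.

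The differences are only technical. You get uniform convergence $h_{\mu_n}\to h_\mu$ and use a two-term split, where the paper uses continuous convergence of $h_n$ and Serfozo's generalized dominated convergence theorem. You also prove continuity of $\wh{c}$ on all of $\wt{\mbb{X}}\times\wt{\mbb{U}}$ directly, via uniform continuity of $c^j$ on a compact domain. That is actually sounder than the paper's appeal to Theorem \ref{thm2}, whose representative-based argument only covers $N$-point empirical measures.
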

\noindent\textbf{Proof}. The compactness of $\wt{\mbb{X}}$ and $\wt{\mbb{U}}$
follow that of $\mbb{X}^j$, $\mbb{U}^j$ $\forall\:j\in V$, and the weak
continuity/boundedness of $\wh{c}$ was previously established for
$\wh{\mf{G}}^N$. To see the compactness of the action constaint, recall our
remarks preceding Lemma \ref{lem8} (or see \cite[lemma 1.1]{Parthasarathy_1967})
and recall that fixed marginals are preserved under weak convergence. Then
$\wt{\mbb{U}}(\tilde{\mu}^{1:M})$ is a closed subset of $\wt{\mbb{U}}$, and
hence compact.\\[5pt]
\indent As for upper semicontinuity, let $\{\tilde{\mu}^{1:M}_n\}_{n\geq
1}\subset\wt{\mbb{X}}$ be such that
$\tilde{\mu}^{1:M}_n\rightarrow\tilde{\mu}^{1:M}$ as $n\rightarrow\infty$, and
take $\{\tilde{\theta}^{1:M}_n\}_{n\geq 1}\subset\wt{\mbb{U}}$ such that
$\tilde{\theta}^{1:M}_n\in\wt{\mbb{U}}(\tilde{\mu}^{1:M}_n)$ for $n\geq 1$. Then
by compactness, extract a subsequence
$\{\tilde{\theta}^{1:M}_{n_\ell}\}_{\ell\geq 1}$ converging weakly as
$\ell\rightarrow\infty$ to a factorizable limit $\tilde{\theta}^{1:M}$. Since
$\tilde{\mu}^{1:M}_{n_\ell}\rightarrow\tilde{\mu}^{1:M}$ as
$\ell\rightarrow\infty$, we get
$\tilde{\theta}^{1:M}\in\wt{\mbb{U}}(\tilde{\mu}^{1:M})$.\\[5pt]
\indent All that remains is the joint continuity of $\wt{F}$. For this, let
$\{(\tilde{\mu}^{1:M}_n,\tilde{\theta}^{1:M}_n)\}_{n\geq
1}\subset\wt{\mbb{X}}\times\wt{\mbb{U}}$ be such that
$(\tilde{\mu}^{1:M}_n,\tilde{\theta}^{1:M}_n)\rightarrow(\tilde{\mu}^{1:M},\tilde{\theta}^{1:M})$
weakly as $n\rightarrow\infty$. Then, for $j\in V$ and $g\in C_b(\mbb{X}^j)$
fixed we have
\begin{align}
  \int_{\mbb{X}^j}g(x)\wt{F}_j(\tilde{\mu}^{1:M}_n,\tilde{\theta}^{1:M}_n)(dx)&=\int_{\mbb{X}^j\times\mbb{U}^j}\bigg(\int_{\mbb{X}^j}g(x)\mc{T}_j(dx|x^\prime,u^\prime,\tilde{\mu}^{1:M}_n)\bigg)\tilde{\theta}^j_n(dx^\prime\times du^\prime)\notag
\end{align}
by Fubini's theorem. Letting
$h_n(x^\prime,u^\prime):=\int_\mbb{X}^jg(x)\mc{T}_j(dx|x^\prime,u^\prime,\tilde{\mu}^{1:M}_n)$
for $n\geq 1$, the joint continuity of $\mc{T}_j$ afforded by assumption
\ref{ass1} yields the continuous convergence of $\{h_n\}_{n\geq 1}$, whence an
application of the generalized dominated convergence theorem \cite{Serfozo_1982}
produces the convergence
$\wt{F}_j(\tilde{\mu}^{1:M}_n,\tilde{\theta}^{1:M}_n)\rightarrow\wt{F}_j(\tilde{\mu}^{1:M},\tilde{\theta}^{1:M})$
weakly as $n\rightarrow\infty$ $\forall\:j\in V$.\hfill{$\qed$}\\[5pt]
\indent As a consequence of Lemma \ref{lem11}, $\wt{\mf{G}}$ are amenable to the
classical theory of fully-observed MDPs. In particular, the dynamic programming
recursions for $\wt{\mf{G}}_T$ given by
\begin{align}
  &\wt{J}_{T-1,T}(\tilde{\mu}^{1:M}):=\inf_{\tilde{\theta}^{1:M}\in\wt{\mbb{U}}(\tilde{\mu}^{1:M})}\big\{\wh{c}(\tilde{\mu}^{1:M},\tilde{\theta}^{1:M})\big\},\label{eq140}\\
  &\wt{J}_{t,T}(\tilde{\mu}^{1:M}):=\inf_{\tilde{\theta}^{1:M}\in\wt{\mbb{U}}(\tilde{\mu}^{1:M})}\big\{\wh{c}(\tilde{\mu}^{1:M},\tilde{\theta}^{1:M})+\beta\wt{J}_{t+1,T}(\wt{F}(\tilde{\mu}^{1:M},\tilde{\theta}^{1:M}))\big\},\quad 0\leq t<T-1\label{eq141}
\end{align} 
are well-defined, and lead to an optimal deterministic, Markovian policy.
Further, $\wt{\mf{G}}_\infty$ admits an optimal deterministic, stationary
policy. In the next section, we consider how the controlled evolution of
directing measures can be equivalently realized by decentralized, C-Sym and
conditionally independent policies for the infinite ensemble.

\subsection{Structure of Optima for \texorpdfstring{$\mf{G}^\infty$}{TEXT}: Part II}\label{sec11}
In this section we continue our characterization of the structure of optimal
policies for $\mf{G}^\infty$, establishing the optimality of C-Sym,
conditionally independent policies under the dCMF IS for $\mf{G}^\infty$. For
this we first develop a cost-preserving map from policies for the mean-field
representation $\wt{\mf{G}}$ to the infinite ensemble, which then gives way to a
refined version of Theorem \ref{thm8}. First, we restate the definition provided
for the dCMF IS with the more precise notions afforded by the preceding
discussion on C-Ex processes.
\begin{definition}[Decentralized Cluster Mean-Field Sharing Information
  Structure]\label{def6} Suppose $(x^{1:\infty}_t,u^{1:\infty}_t)$ is C-Ex with
  state directing measure $\tilde{\mu}^{1:M}_t$ for all $t\geq 0$. Then the
  decentralized cluster mean-field sharing information structure (dCMF IS) is
  given by
  \begin{align*}
    \dcmfis:=\{x^i_{0:t},u^i_{0:t-1},\tilde{\mu}^{1:M}_{0:t}\},\quad\forall\:i\in\mbb{N},t\geq 0.
  \end{align*}
\end{definition}
Under the dCMF IS, the common information available to all agents is at most the
state directing-measure profile $\tilde{\mu}^{1:M}_{0:t}$, but a Markovian
policy associated to agent $i\in\mbb{N}$ under this IS would depend on only
$\{x^i_t,\tilde{\mu}^{1:M}_t\}$ at time $t\geq 0$. The following result
establishes that deterministic policies for $\wt{\mf{G}}$ induce C-Sym,
conditionally independent policies for $\mf{G}^\infty$ which are Markovian and
depend on the dCMF IS, but does not address their operational performance.
\begin{theorem}\label{thm9} A deterministic, Markovian policy
  $\tilde{\gamma}^{1:M}\in\wt{\Pi}^T$ for $\wt{\mf{G}}_T$
  ($\tilde{\gamma}^{1:M}\in\wt{\Pi}$ for $\wt{\mf{G}}_\infty$) induces a
  decentralized (under the dCMF IS) policy $\tilde{\pi}\in\Pi^{T}_{\csym}$ for
  $\mf{G}^\infty_T$ ($\tilde{\pi}\in\Pi_{\csym}$ for $\mf{G}^\infty_\infty$)
  such that
  \begin{align*}
    \tilde{\pi}_t(du^{1:\infty}_t|x^{1:\infty}_{0:t},u^{1:\infty}_{0:t-1})=\prod_{j\in V}\prod_{i\in\mc{C}^\infty_j}\tilde{\pi}^j_t(du^i_t|x^i_t,\tilde{\mu}^{1:M}_t),\quad\forall\:t\geq 0
  \end{align*}
  where $\tilde{\pi}^j_t\in\mc{P}(\mbb{U}^j|\mbb{H}_t)$ satisfies
  $\tilde{\gamma}^j_t(\tilde{\mu}^{1:M}_t)=\tilde{\pi}^j_t\otimes\tilde{\mu}^{j}_t$
  $\forall\:j\in V$.
\end{theorem}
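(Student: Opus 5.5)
We will build the induced policy directly from the disintegration of each mean-field action and then check, by induction on $t$, that the resulting infinite ensemble is C-Ex with directing measures matching the mean-field trajectory. Fix a deterministic Markovian $\tilde{\gamma}=\tilde{\gamma}_{0:T-1}$ with $\tilde{\gamma}_t:\wt{\mbb{X}}\to\wt{\mbb{U}}$ and $\tilde{\gamma}_t(\tilde{\mu}^{1:M})\in\wt{\mbb{U}}(\tilde{\mu}^{1:M})$. By (\ref{eq137}) we may write $\tilde{\gamma}_t(\tilde{\mu}^{1:M})=\prod_{j\in V}\tilde{\gamma}^j_t(\tilde{\mu}^{1:M})$ with $\tilde{\gamma}^j_t(\tilde{\mu}^{1:M})(\cdot\times\mbb{U}^j)=\tilde{\mu}^j$. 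Since $\mbb{X}^j\times\mbb{U}^j$ is standard Borel, we disintegrate and obtain a kernel $\tilde{\pi}^j_t(du|x,\tilde{\mu}^{1:M})$ with $\tilde{\gamma}^j_t(\tilde{\mu}^{1:M})=\tilde{\pi}^j_t(\cdot|\cdot,\tilde{\mu}^{1:M})\otimes\tilde{\mu}^j$. The main measurability point is that this kernel must be jointly measurable in $(x,\tilde{\mu}^{1:M})$. We get this from the measurable disintegration theorem for kernels: the map $\tilde{\mu}^{1:M}\mapsto\tilde{\gamma}^j_t(\tilde{\mu}^{1:M})$ is measurable, and parametrized disintegration yields a version jointly measurable in the parameter (e.g. via a countable generating algebra and Radon--Nikodym derivatives, or \cite{Kallenberg_2002}). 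We then define $\tilde{\pi}_t$ as the product over $j\in V$ and $i\in\mc{C}^\infty_j$ of $\tilde{\pi}^j_t(du^i|x^i_t,\tilde{\mu}^{1:M}_t)$. The kernel used by agent $i$ depends only on $C(i)$ and on $\{x^i_t,\tilde{\mu}^{1:M}_t\}\subset\dcmfis$, so the policy is C-Sym, conditionally independent, and decentralized, once we confirm that $\tilde{\mu}^{1:M}_t$ is a well-defined function of the history.

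The substantive part is to show, by induction on $t$, the following statement: under $\tilde{\pi}$ and $\nu_0$, the state $x^{1:\infty}_t$ is C-Ex with directing measure equal to the deterministic mean-field state $\tilde{\mu}^{1:M}_t$, where $\tilde{\mu}^{1:M}_0=\nu_0$ and $\tilde{\mu}^{1:M}_{t+1}=\wt{F}(\tilde{\mu}^{1:M}_t,\tilde{\gamma}_t(\tilde{\mu}^{1:M}_t))$. In fact the states will be genuinely independent across agents with laws $\tilde{\mu}^{C(i)}_t$, since the flow is deterministic.
\begin{itemize}
\item[\textbf{Base case.}] At $t=0$ the states are independent with laws $\nu^{C(i)}_0$. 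By Lemma \ref{lem8} the cluster empirical measures converge a.s. to $\nu_0^{1:M}$. Hence $\tilde{\mu}^{1:M}_0$ is a.s. a measurable function of $x^{1:\infty}_0$, namely the a.s. limit of $\mu^{1:M}[x^{1:N}_0]$.
\item[\textbf{Inductive step.}] Given the hypothesis at time $t$, the actions are drawn independently via $\tilde{\pi}^{C(i)}_t$. So the pairs $(x^i_t,u^i_t)$ are independent with laws $\tilde{\gamma}^{C(i)}_t(\tilde{\mu}^{1:M}_t)$, and the state-action directing measure is $\tilde{\theta}^{1:M}_t=\tilde{\gamma}_t(\tilde{\mu}^{1:M}_t)$. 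The noises $w^i_t$ are independent across agents and of the past. The cluster empirical measures of the states equal the deterministic $\tilde{\mu}^{1:M}_t$ a.s. in the limit, so the mean-field argument in $f_{C(i)}$ is deterministic. Therefore the $x^i_{t+1}$ are independent, and by the computation (\ref{eq145})--(\ref{eq139}) in Proposition \ref{prop5}, $x^i_{t+1}$ has law $\wt{F}_{C(i)}(\tilde{\mu}^{1:M}_t,\tilde{\theta}^{C(i)}_t)$. This closes the induction.
\end{itemize}

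One subtlety here: in the dynamics the finite-$N$ empirical measures rather than their limits enter $f$. Under the restriction $\pi|_N$ this affects the cost analysis but not the definition of the induced policy. For the present statement we only need the policy to be well defined and to have the asserted product form. We therefore interpret $\tilde{\mu}^{1:M}_t$ in the information structure as the directing measure of the infinite ensemble, per Definition \ref{def6}, which by Lemma \ref{lem8} is a.s. a function of $x^{1:\infty}_t$.

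The infinite-horizon case is identical, with stationary $\tilde{\gamma}$ and the induction run over all $t\geq 0$. I expect the main obstacle to be the careful justification that the conditioning variable $\tilde{\mu}^{1:M}_t$ is legitimately available as part of the history, i.e. that $\tilde{\pi}_t\in\mc{P}(\bar{\mbb{U}}|\mbb{H}_t)$. I would handle this by defining $\tilde{\mu}^{1:M}_t$ as $\limsup$-type measurable functionals of the empirical measures, set to a fixed value off the convergence set, together with the jointly measurable disintegration above. The identification $\tilde{\gamma}^j_t(\tilde{\mu}^{1:M}_t)=\tilde{\pi}^j_t\otimes\tilde{\mu}^j_t$ then holds by construction.
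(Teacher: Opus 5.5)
Your proposal is correct and follows essentially the same route as the paper. You disintegrate each factor $\tilde{\gamma}^j_t(\tilde{\mu}^{1:M}_t)$ against $\tilde{\mu}^j_t$ to get a common cluster kernel, take the product over agents, and show by induction that the state directing measures follow the deterministic flow $\tilde{\mu}^{1:M}_{t+1}=\wt{F}(\tilde{\mu}^{1:M}_t,\tilde{\gamma}_t(\tilde{\mu}^{1:M}_t))$. Your version is more careful in two places: you track genuine independence across agents, which is what identifies the directing measure at $t+1$ (the paper computes only marginal laws); and you justify joint measurability of the disintegration and of $\tilde{\mu}^{1:M}_t$ as a function of the history, where the paper only asserts these.
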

\noindent\textbf{Proof}. Consider $\mf{G}^\infty_T$. For $0\leq t<T$, write
$\tilde{\theta}^{1:M}_t=\tilde{\gamma}^{1:M}(\tilde{\mu}^{1:M}_t)$. Then
conditioning on $\tilde{\mu}^{1:M}_t$, the map $\tilde{\gamma}^j\mapsto\pi^j_t$
specified by the disintegration
$\tilde{\theta}^j_t=\pi^j_t\otimes\tilde{\mu}^j_t$ is measurable, with
$\pi^j_t\in\mc{P}(\mbb{U}^j|\mbb{X}^j)$ $\forall\:j\in V$. Hence, we may
construct the induced policy
$\tilde{\pi}=\tilde{\pi}^{1:\infty}\in\Pi^T_{\csym}$ so that for any $t\geq 0$,
$j\in V$, and $i\in\mc{C}^\infty_j$, $\tilde{\pi}^i_t$ denotes the composition
$(x^i_t,\tilde{\gamma}^{1:M}_t,\tilde{\mu}^{1:M}_t)\mapsto
(x^i_t,\pi^j_t)\mapsto \pi^j_t(\cdot|x^i_t)$.\\[5pt]
\indent All that remains to show is that, under $\tilde{\pi}$, the resulting
directing measures for $\mf{G}^\infty_T$ coincide with the directing measure
process induced by $\tilde{\gamma}^{1:M}$ for $\wt{\mf{G}}_T$.  Note first that
$\nu_0=\tilde{\mu}^{1:M}_0$ is the directing measure of $x^{1:\infty}_0$ by
assumption. To proceed by induction, suppose $\tilde{\mu}^{1:M}_t$ is the
directing measure of $x^{1:\infty}_t$ for some $0\leq t<T$. Then
$\mc{L}(x^i_t,u^i_t|\tilde{\mu}^{1:M}_t)=\pi^{C(i)}_t\otimes\tilde{\mu}^{C(i)}=\tilde{\theta}^{C(i)}_t$
for $i\in\mbb{N}$. Hence, $\forall\:j\in V$, $i\in\mc{C}^\infty_j$ and
$A\in\mc{B}(\mbb{X}^j)$ we have
\begin{align*}
  P(x^i_{t+1}\in A|\tilde{\mu}^{1:M}_t)=\int_{\mbb{X}^j\times\mbb{U}^j}P(x^i_{t+1}\in A,dx^i_t,du^i_t|\tilde{\mu}^{1:M}_t)=\int_{\mbb{X}^j\times\mbb{U}^j}\mc{T}_j(A|x^i,u^i,\tilde{\mu}^{1:M}_t)\tilde{\theta}^j_t(dx^i\times du^i)
\end{align*}
so now $\mc{L}(x^i_{t+1}|\tilde{\mu}^{1:M}_t)=\tilde{\mu}^j_{t+1}$ under
$\tilde{\pi}$. However, since $\tilde{\mu}^{1:M}_0=\nu_0$ is deterministic, this
resolves to $\mc{L}(x^i_{t+1})=\tilde{\mu}^j_{t+1}$, which holds $\forall\:t\geq
0$ by induction, and the IS used by $\tilde{\pi}$ is the dCMF IS. Clearly, the
construction of $\tilde{\pi}$ and induction argument would work to establish the
same result for $\mf{G}^\infty_\infty$.\hfill{$\qed$}\\[5pt]
\indent By the proof of Theorem \ref{thm9}, the time dependence of coordinate
policies $\tilde{\pi}_t$ is inherited from $\tilde{\gamma}^{1:M}_t$, and as such
$\tilde{\pi}$ is stationary whenever $\tilde{\gamma}^{1:M}$ is. Furthermore,
Theorem \ref{thm9} implies that deterministic policies for $\wt{\mf{G}}$ induce
propagation of chaos for the infinite ensemble; if initial states are $\iid$
within clusters (only independent between) then so are all subsequent states
under induced policy $\tilde{\pi}$. Moreover, the trajectory of cluster
distributions is deterministic. Beyond merely recasting centralized mean-field
policies as decentralized ones for the ensemble, this procedure turns out to be
cost preserving between the two problems.
\begin{theorem}\label{thm10} Let $\tilde{\pi}$ be the decentralized policy
  induced by $\tilde{\gamma}^{1:M}$ as in Theorem \ref{thm9}. Then
  \begin{align*}
    J_T(\tilde{\pi},\nu_0)=\wt{J}_T(\tilde{\gamma}^{1:M},\nu_0),\quad\text{and}\quad J(\tilde{\pi},\nu_0)=\wt{J}(\tilde{\gamma}^{1:M},\nu_0)
  \end{align*}
  with the provisio that $\tilde{\pi}\in\Pi^T_{\csym}$,
  $\tilde{\gamma}^{1:M}\in\wt{\Pi}^T$ in the former and
  $\tilde{\pi}\in\Pi_{\csym}$, $\tilde{\gamma}^{1:M}\in\wt{\Pi}$ in the latter.
\end{theorem}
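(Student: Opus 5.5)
We prove the finite-horizon identity first and then pass to $T\to\infty$. The strategy is to rewrite the finite-population cost $J^N_T(\tilde{\pi}|_N,\nu_0)$ through the cluster empirical measures and apply the almost sure convergence of Lemma \ref{lem8}. Theorem \ref{thm9} tells us two things. Under $\tilde{\pi}$ the infinite ensemble $(x^{1:\infty}_t,u^{1:\infty}_t)$ is C-Ex at every $t$. Its state-action directing measure is the deterministic array $\tilde{\theta}^{1:M}_t=\tilde{\gamma}^{1:M}_t(\tilde{\mu}^{1:M}_t)$, where $\tilde{\mu}^{1:M}_t$ is the deterministic flow generated by $\wt{F}$ from $\tilde{\mu}^{1:M}_0=\nu_0$. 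Indeed, by the induction in Theorem \ref{thm9}, the pairs $(x^i_t,u^i_t)$ are i.i.d.\ within clusters with law $\tilde{\theta}^{C(i)}_t$ and independent across clusters. This is propagation of chaos with a deterministic directing measure.

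\textbf{Step 1 (rewriting the cost).} For each $N\geq M$ and $0\leq t<T$, Lemma \ref{lem1} gives
\[
c^N(x^{1:N}_t,u^{1:N}_t)=\wh{c}\big(\mu^{1:M}[x^{1:N}_t],\mu^{1:M}[(x^{1:N}_t,u^{1:N}_t)]\big).
\]
Hence
\[
J^N_T(\tilde{\pi}|_N,\nu_0)=\E\Big[\sum_{t=0}^{T-1}\beta^t\wh{c}\big(\mu^{1:M}[x^{1:N}_t],\mu^{1:M}[(x^{1:N}_t,u^{1:N}_t)]\big)\Big].
\]
Here the expectation is taken under the restriction of the infinite-ensemble strategic measure to the first $N$ agents. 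This restriction is legitimate because each agent's action depends only on its own state and on the deterministic $\tilde{\mu}^{1:M}_t$.

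\textbf{Step 2 (a.s.\ convergence of empirical measures).} We apply Lemma \ref{lem8} to the C-Ex process $(x^{1:\infty}_t,u^{1:\infty}_t)$; the underlying spaces are compact by Assumption \ref{ass1}(i). This gives
\[
\mu^{1:M}[(x^{1:N}_t,u^{1:N}_t)]\to\tilde{\theta}^{1:M}_t \quad \text{a.s.}
\]
Taking marginals gives $\mu^{1:M}[x^{1:N}_t]\to\tilde{\mu}^{1:M}_t$ a.s. One point needs care: Lemma \ref{lem8} indexes empirical measures by $N$ with $N_j^N\to\infty$. This holds by Assumption \ref{ass1}(iv), since every cluster grows linearly in $N$.

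\textbf{Step 3 (passing to the limit).} The map $\wh{c}$ is jointly continuous and bounded (Lemma \ref{lem11}). By bounded convergence, each stage term therefore converges to the deterministic value $\wh{c}(\tilde{\mu}^{1:M}_t,\tilde{\theta}^{1:M}_t)$. Summing over the finitely many stages shows that $\lim_N J^N_T(\tilde{\pi}|_N,\nu_0)$ exists. This limit equals $\sum_{t<T}\beta^t\wh{c}(\tilde{\mu}^{1:M}_t,\tilde{\theta}^{1:M}_t)=\wt{J}_T(\tilde{\gamma}^{1:M},\nu_0)$, because the mean-field MDP is deterministic under $\tilde{\gamma}^{1:M}$. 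Since the $\limsup$ in (\ref{eq8}) is then an actual limit, we obtain $J_T(\tilde{\pi},\nu_0)=\wt{J}_T(\tilde{\gamma}^{1:M},\nu_0)$.

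\textbf{Step 4 (infinite horizon).} Let $\|c\|$ denote a common bound on the $c^j$, so that $c^N\leq M\|c\|$ and $\wh{c}\leq M\|c\|$. The tail of both costs beyond time $T$ is then at most $\beta^T M\|c\|/(1-\beta)$, uniformly in $N$. Consequently $|J^N(\tilde{\pi}|_N,\nu_0)-J^N_T(\tilde{\pi}|_N,\nu_0)|$ and $|\wt{J}-\wt{J}_T|$ are both small uniformly in $N$. Letting $N\to\infty$ and then $T\to\infty$ gives $J(\tilde{\pi},\nu_0)=\wt{J}(\tilde{\gamma}^{1:M},\nu_0)$.

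The main obstacle I anticipate is Step 2. One must verify rigorously that the restricted ensemble under $\tilde{\pi}$ really is C-Ex with directing measure exactly $\tilde{\theta}^{1:M}_t$, including the joint state-action law and not only the state marginals that Theorem \ref{thm9} explicitly tracks. The first thing to try is to strengthen the induction in Theorem \ref{thm9}: show that $x^{1:\infty}_{t+1}$ is conditionally i.i.d.\ within clusters and independent across clusters. This should follow from the product form (\ref{eq3}) together with the product-form policy. If that works, Proposition \ref{prop4} with a deterministic directing measure reduces the question to a strong law of large numbers within each cluster.
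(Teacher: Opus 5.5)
Your proposal is correct and follows essentially the same route as the paper. Both arguments rewrite $J^N_T$ through $\wh{c}$ via Lemma \ref{lem1}, use that under $\tilde{\pi}$ the directing measure is a Dirac mass at the deterministic flow $\tilde{\theta}^{1:M}_{0:T-1}$, and pass to the limit with Lemma \ref{lem8} and bounded convergence. The paper simply asserts the conditional independence you flag as the obstacle; your strengthened induction, with the deterministic $\tilde{\mu}^{1:M}_t$ plugged into the dynamics, is what justifies it. Your uniform tail bound and your use of Assumption \ref{ass1}(iv) to ensure each cluster grows make the infinite-horizon step and the application of Lemma \ref{lem8} more explicit than the paper does.
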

\noindent\textbf{Proof}. Consider $\mf{G}^\infty_T$. Let
$Q\in\mc{P}(\prod_{t=0}^{T-1}\bar{\mbb{X}}\times\bar{\mbb{U}})$ be the joint
strategic measure induced by $\tilde{\pi}$. Then the sequence of costs  induced
by $Q$ leading to $J_T(\tilde{\pi},\nu_0)$ are given by
\begin{align}
  J^N_T(\tilde{\pi},\nu_0)&=\int_{\prod_{t=0}^{T-1}\bar{\mbb{X}}\times\bar{\mbb{U}}}\sum_{t=0}^{T-1}\beta^tc^N(x^{1:N}_t,u^{1:N}_t)Q(dx^{1:N}_{0:T-1}\times du^{1:N}_{0:T-1})\notag\\
  &=\int_{\prod_{t=0}^{T-1}\ess\times\eas}\sum_{t=0}^{T-1}\beta^t\wh{c}(\mu^{1:M}[x^{1:N}_t],\mu^{1:M}[(x^{1:N}_t,u^{1:N}_t)])Q(dx^{1:N}_{0:T-1}\times du^{1:N}_{0:T-1}).\label{eq138}
\end{align}
Let $\xi^{1:M}$ denote the directing measure of
$(x^{1:\infty}_{0:T-1},u^{1:\infty}_{0:T-1})$ under $\tilde{\pi}$, with
$\Xi:=\mc{L}(\xi^{1:M})$. Then, with $\mu^{1:M}_{t,N}:=\mu^{1:M}[x^{1:N}_t]$,
$\theta^{1:M}_{t,N}:=\mu^{1:M}[(x^{1:N}_t,u^{1:N}_t)]$ and Fubini's theorem,
rewrite (\ref{eq138}) to obtain
\begin{align*}
  J^N_T(\tilde{\pi},\nu_0)&=\int_{\mc{P}(\prod_{t=0}^{T-1}\prod_{j\in V}\mbb{X}^j\times\mbb{U}^j)}\bigg(\int_{\prod_{t=0}^{T-1}\ess\times\eas}\sum_{t=0}^{T-1}\beta^t\wh{c}(\mu^{1:M}_{t,N},\theta^{1:M}_{t,N})\cdots\\
  &\qquad\qquad\qquad\qquad\cdots\prod_{j\in V}\prod_{i\in\mc{C}^N_j}\xi^{j}(dx^{i}_{0:T-1}\times du^{i}_{0:T-1})\bigg)\Xi(d\xi^{1:M}).
\end{align*}
Now, since $\xi^{1:M}=\tilde{\theta}^{1:M}_{0:T-1}$, the proof of Theorem
\ref{thm9} implies $\Xi$ is a Dirac mass on the (deterministic) trajectory of
$\tilde{\theta}^{1:M}_{0:T-1}$ where
$\tilde{\theta}^{1:M}_t=\tilde{\gamma}^{1:M}_t(\tilde{\mu}^{1:M}_t)$ and
$\tilde{\mu}^{1:M}_{0:T-1}$ follow dynamics (\ref{eq139}). Hence, invoking the
dominated convergence theorem followed by Lemma \ref{lem8}, take the limit as
$N\rightarrow\infty$ to find
\begin{align*}
  \lim_{N\rightarrow\infty}J^N_T(\tilde{\pi},\nu_0)&=\int_{\mc{P}(\prod_{t=0}^{T-1}\prod_{j\in V}\mbb{X}^j\times\mbb{U}^j)}\bigg(\int_{\prod_{t=0}^{T-1}\ess\times\eas}\sum_{t=0}^{T-1}\beta^t\wh{c}(\tilde{\mu}^{1:M}_{t},\tilde{\theta}^{1:M}_{t,N})\cdots\\
  &\qquad\qquad\qquad\qquad\cdots\prod_{j\in V}\prod_{i\in\mc{C}^N_j}\xi^{j}(dx^{i}_{0:T-1}\times du^{i}_{0:T-1})\bigg)\Xi(d\xi^{1:M})\\
  &=\sum_{t=0}^{T-1}\beta^t\wh{c}(\tilde{\mu}^{1:M}_t,\tilde{\theta}^{1:M}_t)\\
  &=\wt{J}_T(\tilde{\gamma}^{1:M},\nu_0)
\end{align*}
which yields the relation
$J_T(\tilde{\pi},\nu_0)=\wt{J}_T(\tilde{\gamma}^{1:M},\nu_0)$. As previously, we
note that all arguments used here remain valid when the horizon is
infinite.\hfill{$\qed$}\\[5pt]
\indent Finally, we revisit and refine Theorem \ref{thm7} in light of our
findings for Theorems \ref{thm9} and \ref{thm10}.
\begin{theorem}\label{thm11} Let $\tilde{\pi}^\ast$ be a decentralized policy
  (under the dCMF IS) for $\mf{G}^\infty$ induced by an optimal, deterministic
  policy for $\wt{\mf{G}}$ as in Theorem \ref{thm9}. Then
  \begin{align*}
    &\inf_{\pi\in\bar{\Pi}^T}J_T(\pi,\nu_0)=\inf_{\pi\in\Pi^T_{\csym}}J_T(\pi,\nu_0)=J_T(\tilde{\pi}^\ast,\nu_0),\quad\text{and}\\
    &\inf_{\pi\in\bar{\Pi}^T}J(\pi,\nu_0)=\inf_{\pi\in\Pi_{\csym}}J(\pi,\nu_0)=J(\tilde{\pi}^\ast,\nu_0).
  \end{align*} 
\end{theorem}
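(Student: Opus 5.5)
The plan is to sandwich $J_T(\tilde{\pi}^\ast,\nu_0)$ between the global infimum and the C-Sym infimum, using Theorem \ref{thm7} for one direction and Theorems \ref{thm9}--\ref{thm10} for the other. Since $\tilde{\pi}^\ast\in\Pi^T_{\csym}\subset\bar{\Pi}^T$ by Theorem \ref{thm9}, we immediately have
\begin{align*}
\inf_{\pi\in\bar{\Pi}^T}J_T(\pi,\nu_0)\leq\inf_{\pi\in\Pi^T_{\csym}}J_T(\pi,\nu_0)\leq J_T(\tilde{\pi}^\ast,\nu_0)=\wt{J}^\ast_T(\nu_0),
\end{align*}
where the equality is Theorem \ref{thm10} applied to the optimal deterministic Markov policy $\tilde{\gamma}^{\ast}$ of $\wt{\mf{G}}_T$ furnished by Lemma \ref{lem11} and the recursions (\ref{eq140})--(\ref{eq141}). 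It then suffices to show $\wt{J}^\ast_T(\nu_0)\leq\inf_{\pi\in\bar{\Pi}^T}J_T(\pi,\nu_0)$.

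For the reverse inequality I would reuse the construction in the proof of Theorem \ref{thm7}. There, a subsequence of optimal C-Ex$^N$ strategic measures converges (Lemma \ref{lem5}) to a C-Ex measure $\wt{P}$. Moreover, $J^\ast_T(\nu_0)\geq\E_{\wt{Q}}[\sum_t\beta^t\wh{c}(\tilde{\mu}^{1:M}_t,\tilde{\theta}^{1:M}_t)]$, where $\wt{Q}$ is the law of the directing-measure process of $\wt{P}$. By Proposition \ref{prop4} each $\tilde{\theta}^{1:M}_t$ is factorizable with $\mbb{X}$-marginals $\tilde{\mu}^{1:M}_t$, so $\tilde{\theta}^{1:M}_t\in\wt{\mbb{U}}(\tilde{\mu}^{1:M}_t)$ almost surely. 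By Proposition \ref{prop5}, $\tilde{\mu}^{1:M}_{t+1}=\wt{F}(\tilde{\mu}^{1:M}_t,\tilde{\theta}^{1:M}_t)$ with $\tilde{\mu}^{1:M}_0=\nu_0$, where the initial directing measure is $\nu_0$ because initial states are i.i.d.\ within clusters and independent across them.

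Hence the regular conditional distributions of $\tilde{\theta}^{1:M}_t$ given $(\tilde{\mu}^{1:M}_{0:t},\tilde{\theta}^{1:M}_{0:t-1})$ define an admissible, possibly randomized history-dependent policy $\tilde{\pi}\in\wt{\Pi}^T$ whose strategic measure is exactly $\wt{Q}$. Therefore
\begin{align*}
J^\ast_T(\nu_0)\geq\wt{J}_T(\tilde{\pi},\nu_0)\geq\wt{J}^\ast_T(\nu_0),
\end{align*}
where the last step uses that deterministic Markov policies are optimal among randomized history-dependent ones for the fully observed MDP $\wt{\mf{G}}_T$ (Lemma \ref{lem11}). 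This closes the chain of equalities.

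The main obstacle is verifying that $\wt{Q}$ really is realized by an admissible policy for $\wt{\mf{G}}$. This needs two things. First, the action constraint must hold almost surely; this requires that the limit directing measures remain factorizable, which follows from Definition \ref{def4} together with closedness of product measures under weak limits [Parthasarathy]. Second, the state transitions must be the deterministic $\wt{F}$. Proposition \ref{prop5} gives this, but it presumes the limit ensemble follows the controlled dynamics $\mc{T}_j$ with coupling through $\tilde{\mu}^{1:M}_t$. I would justify this by passing the finite-$N$ transition identity to the limit, using weak continuity of $\mc{T}_j$ (Assumption \ref{ass1}(ii)) together with Lemma \ref{lem8} and Theorem \ref{thm12}.

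For the infinite horizon the same argument applies. The lower bound passes to $T=\infty$ via Fatou and monotone convergence exactly as in Theorem \ref{thm7}, and the upper bound uses the optimal stationary policy for $\wt{\mf{G}}_\infty$ and Theorem \ref{thm10}.
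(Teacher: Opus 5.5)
Your argument is correct and uses the same sandwich as the paper's chain (\ref{eq142}): the lower bound comes from the proof of Theorem~\ref{thm7}, and the upper bound from Theorems~\ref{thm9}--\ref{thm10}. Where you differ is in how the subsequential limit is realized inside $\wt{\mf{G}}_T$.

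The paper claims, ``by precisely the same inductive argument as in Theorem~\ref{thm9}'', that the limiting flow $\tilde{\mu}^{1:M}_{0:T-1}$ is deterministic and that $\tilde{\theta}^{1:M}_{0:T-1}$ is generated by a \emph{deterministic} policy for $\wt{\mf{G}}_T$. That induction presupposes that $\tilde{\theta}^{1:M}_t$ is a deterministic function of $\tilde{\mu}^{1:M}_t$. This is not available for a subsequential limit of arbitrary optimal C-Ex$^N$ policies: a common coin flip between two equally good joint actions already makes $\tilde{\theta}^{1:M}_0$ random, and hence $\tilde{\mu}^{1:M}_1$ random.

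Your route needs only that $\wt{Q}$ is the strategic measure of some admissible, possibly randomized, history-dependent policy. Regular conditional distributions give this once they are corrected on a null set by a measurable selector of $\wt{\mbb{U}}(\cdot)$, which exists by Lemma~\ref{lem11}. Since $\wt{J}^\ast_T(\nu_0)$ is an infimum over all of $\wt{\Pi}^T$, the lower bound then follows. The paper's version would yield a stronger by-product, namely that limits of optimal finite-population policies are themselves deterministic mean-field flows. Yours is the one that holds as stated.

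You are also right to flag a step the paper uses silently. Applying Proposition~\ref{prop5} to the limit requires $\wt{P}$ to obey the product kernel (\ref{eq3}) with $\mu^{1:M}_t$ replaced by $\tilde{\mu}^{1:M}_t$. Two remarks on carrying this out:
\begin{enumerate}[(i)]
\item Theorem~\ref{thm12} alone controls only the law of the empirical measures. To get joint convergence of a finite block of agents together with the empirical measure, apply it to the augmented C-Ex process that pairs each agent $i>k$ with the first $k$ agents, exactly as in the proof of Proposition~\ref{prop4}.
\item Keep the product (conditional independence) form in the limiting identity. That is what makes $\tilde{\mu}^{1:M}_{t+1}=\wt{F}(\tilde{\mu}^{1:M}_t,\tilde{\theta}^{1:M}_t)$ hold almost surely, via a conditional law of large numbers. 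The proof of Proposition~\ref{prop5} as written only establishes this identity in conditional expectation.
\end{enumerate}
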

\noindent\textbf{Proof}. Consider $\mf{G}^\infty_T$. Recall the lower bound in
(\ref{eq135}), which yields
\begin{align*}
  J^\ast_T(\nu_0)\geq\liminf_{\ell\rightarrow\infty}J^{N_\ell}_T(\pi^{N_\ell,\ast},\nu_0)
\end{align*}
for a subsequence $\{N_\ell\}_{\ell\geq 1}$, and with
$\pi^{N_\ell,\ast}\in\Pi^{N_\ell,T}_{\cex}$ optimal for $\mf{G}^{N_\ell}_T$ for
all $\ell\geq 1$. Along this subsequence, the strategic measures converge (along
any finite marginal) to one which is C-Ex such that
$(x^{1:\infty}_t,u^{1:\infty}_t)$ is C-Ex, say with directing measure
$\tilde{\theta}^{1:M}_t$, and state marginal $\tilde{\mu}^{1:M}_t$ for all
$t\geq 0$. By precisely the same inductive argument as in the proof of Theorem
\ref{thm9}, we find that $\tilde{\mu}^{1:M}_{0:T-1}$ is a deterministic flow
evolving under dynamics $\wt{F}$, such that the directing measures
$\tilde{\theta}^{1:M}_{0:T-1}$ can be realized by an admissible, deterministic
policy for $\wt{\mf{G}}_T$. Thus, if $\tilde{\gamma}^{1:M,\ast}$ is optimal for
$\wt{\mf{G}}_T$ and induces a decentralized policy $\tilde{\pi}^\ast$ for
$\mf{G}^\infty_T$ as in Theorem \ref{thm9}, then by Theorem \ref{thm10}, we
obtain the sequence of inequalities
\begin{align}
  J^\ast_T(\nu_0)\geq\inf_{\tilde{\pi}\in\wt{\Pi}^T}\wt{J}_T(\tilde{\pi},\nu_0)=\wt{J}(\tilde{\gamma}^{1:M,\ast},\nu_0)=J_T(\tilde{\pi}^\ast,\nu_0)\geq\inf_{\pi\in\Pi^T_{\csym}}J_T(\pi,\nu_0)\geq J^\ast_T(\nu_0)\label{eq142}
\end{align}
which establishes the result for $\mf{G}^\infty_T$. As previously, none of the
above arguments were sensitive to the finiteness of the time horizon, and so may
be repeated to obtain an analogous result for
$\mf{G}^\infty_\infty$.\hfill{$\qed$}\\[5pt]
\indent Theorem \ref{thm11} acts as a verification result for $\mf{G}^\infty$
and produces two important corollaries. On verification, it certifies that any
decentralized, C-Sym policy induced (in the sense of Theorem \ref{thm9}) by a
solution to the dynamic programming recursions (\ref{eq140}-\ref{eq141}) (or a
corresponding discounted cost optimality equation) for $\wt{\mf{G}}$ is optimal
for $\mf{G}^\infty$. Complementing the computational utility of this mean-field
dynamic program, the first corollary demonstrates the C-Sym truncations
$\tilde{\pi}^\ast|_N$ are asymptotically optimal for $\mf{G}^N$ (in the large
population limit) while retaining the same decentralization and structural
properties as a C-Sym optimum for $\mf{G}^\infty$. We emphasize that this
approximation result is valid within the class of completely centralized
policies for the finite ensemble. A second, more theoretically-oriented
corollary stems from the observation that, under decentralized, C-Sym policies
the directing measures for $\mf{G}^\infty$ admit a deterministic flow,
upgradining $\tilde{\mu}^{1:M}_t$ from an array of conditional cluster
distributions to an array of (unconditional) laws over each cluster. As noted
above, this can be interpreted as a probabilistic propagation of chaos --
producing a connection with the McKean-Vlasov-type formalisms commonly deployed
to analyze non-cooperative models. We conclude our study with a final section
detailing these corollaries.

\section{Optimality of the McKean-Vlasov Team Representation, and \texorpdfstring{$\varepsilon$}{TEXT}-Optimality of Cluster-Symmetric, Decentralized Policies for \texorpdfstring{$\mf{G}^N$}{TEXT}}\label{sec12}
This section deals with two important consequences of our analyses in the
preceding section. First is the verification that a decentralized, McKean-Vlasov
team problem with a system of $M$ cluster-representatives is equivalent to
$\mf{G}^\infty$ and leads to an optimal policy. The second deals with the
approximate optimality of C-Sym, conditionally independent and decentralized
policies (utilizing only the dCMF IS) for $\mf{G}^N$ for $N$ sufficiently large.
Presenting these in turn, we begin by formulating the McKean-Vlasov team in
analogy with $\mf{G}^\infty$.\\[5pt]
\indent Consider a collection of $M$ cluster-representative agents, with state
dynamics given by
\begin{align}
  x^{\rep_j}_{t+1}=f_j(x^{\rep_j}_t,u^{\rep_j}_t,\mu^{\rep_1}_t,\mu^{\rep_2}_t,\dots,\mu^{\rep_M}_t,w^{\rep_j}_t)=:f_j(x^{\rep_j}_t,u^{\rep_j}_t,\mu^{\rep_{1:M}}_t,w^{\rep_j}_t),\quad j\in V,\;t\geq 0
\end{align}
and where the functions $f_j$, disturbances
$w^{\rep_j}_t\overset{\iid}{\sim}\nu^j$ and initializations
$x^{\rep_j}_0\sim\nu_0^j$ are the same as for $\mf{G}^N$, but now with the
consistency condition $\mu^{\rep_j}_t:=\mc{L}(x^{\rep_j}_t)$ so that
$\mu^{\rep_j}_0=\nu_0^j$ $\forall\:j\in V$. Policy spaces are defined in analogy
with $\Pi^{M,T}_{\csym}$ for $\mf{G}^N$ too (i.e., with only one agent per
cluster) except with the decentralized IS such that representative $j\in V$ at
time $t\geq 0$ has access to
\begin{align*}
  \mc{I}^{\rep_j}_t:=\{x^{\rep_j}_{t},\mu^{\rep_{1:M}}_{t}\}.
\end{align*}
Denoting the joint policy space with $\Pi^{\rep,T}$ for the finite horizon, and
$\Pi^{\rep}$ for the infinite horizon, we seek global optima
$\inf_{\pi\in\Pi^{\rep,T}}J^{\rep}_T(\pi,\nu_0)$ or
$\inf_{\pi\in\Pi^{\rep}}J^{\rep}(\pi,\nu_0)$ where the cost functions are given
by
\begin{align*}
  &J^{\rep}_T(\pi,\nu_0):=\E^{\pi}_{\nu_0}\bigg[\sum_{t=0}^{T-1}\beta^t\sum_{j\in V}c^j(x^{\rep_j}_t,u^{\rep_j}_t,\mu^{\rep_j}_t)\bigg],\quad\text{or}\\
  &J^{\rep}(\pi,\nu_0):=\E^{\pi}_{\nu_0}\bigg[\sum_{t=0}^{\infty}\beta^t\sum_{j\in V}c^j(x^{\rep_j}_t,u^{\rep_j}_t,\mu^{\rep_j}_t)\bigg]
\end{align*}
for the finite and infinite horizon problems, respectively. The above
constitutes a (decentralized) McKean-Vlasov team problem with $M$ representative
agents, which we denote with $\mf{G}^{\rep}$ without specifying the cost
criterion. As formulated, the IS for $\mf{G}^{\rep}$ coincides with the dCMF IS
for $\mf{G}^\infty$ with the caveat that representative agents share their state
laws $\{\mu^{\rep_j}_t\}_{j\in V}$ rather than directing measures
$\{\tilde{\mu}^{1:M}_t\}_{j\in V}$. However, as was seen in the proof of Theorem
\ref{thm9}, this distinction is moot for $\mf{G}^\infty$, since the flow of
directing measures is deterministic. From this follows the first main corollary
of theorem \ref{thm11}.
\begin{corollary}\label{cor2} There exists a cost-preserving bijection between
  $\Pi^{\rep,T}$ ($\Pi^{\rep}$) and decentralized policies in $\Pi^{T}_{\csym}$
  ($\Pi_{\csym}$) under the dCMF IS.
\end{corollary}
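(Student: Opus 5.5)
The bijection will be the obvious identification of local policies: a representative policy $\pi^{\rep}=\{\pi^{\rep_j}_t\}_{j\in V,t\geq 0}$ consists of kernels $\pi^{\rep_j}_t\in\mc{P}(\mbb{U}^j|\mbb{X}^j\times\mc{P}(\mbb{X}^1)\times\cdots\times\mc{P}(\mbb{X}^M))$. We map it to the decentralized C-Sym policy $\Phi(\pi^{\rep})\in\Pi^T_{\csym}$ whose agent $i\in\mc{C}^\infty_j$ uses $\pi^{\rep_j}_t(du^i_t|x^i_t,\tilde{\mu}^{1:M}_t)$, independently across agents, as in Theorem \ref{thm9}. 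Conversely, a decentralized $\pi\in\Pi^T_{\csym}$ under the dCMF IS is by Definition \ref{def5} a product $\prod_j\prod_{i\in\mc{C}^\infty_j}\pi^j_t(du^i|x^i_t,\tilde{\mu}^{1:M}_t)$ with a common kernel per cluster. We read off $\pi^j_t$ as the representative's kernel. These two maps are mutually inverse by construction, once we fix that policies are identified up to their kernels. So the content of the corollary is that the identified information variables agree and that costs match.

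\textbf{Step 1: the flows coincide.} Fix $\pi^{\rep}$. We show by induction on $t$ that the deterministic directing-measure flow $\tilde{\mu}^{1:M}_t$ of $\mf{G}^\infty$ under $\Phi(\pi^{\rep})$ equals the law flow $\mu^{\rep_{1:M}}_t$ of $\mf{G}^{\rep}$. At $t=0$ both equal $\nu_0$. Suppose they agree at time $t$, and set $\tilde{\theta}^j_t:=\pi^{\rep_j}_t\otimes\tilde{\mu}^j_t$. Along the lines of the induction in the proof of Theorem \ref{thm9} and Proposition \ref{prop5}, we get $\tilde{\mu}^j_{t+1}=\wt{F}_j(\tilde{\mu}^{1:M}_t,\tilde{\theta}^j_t)=\int\mc{T}_j(\cdot|x,u,\tilde{\mu}^{1:M}_t)\tilde{\theta}^j_t(dx\times du)$. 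The representative satisfies $\mc{L}(x^{\rep_j}_t,u^{\rep_j}_t)=\pi^{\rep_j}_t\otimes\mu^{\rep_j}_t$ and uses the same kernel $\mc{T}_j$ with the same (deterministic) mean-field argument, so $\mu^{\rep_j}_{t+1}$ is given by the identical formula.

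A point to check here is that this induction applies to an arbitrary C-Sym decentralized policy, not just one generated by a deterministic $\tilde{\gamma}$. Proposition \ref{prop4} gives conditional i.i.d.-ness given $\tilde{\mu}^{1:M}_t$. Because $\tilde{\mu}_0=\nu_0$ is deterministic and actions depend only on $(x^i_t,\tilde{\mu}^{1:M}_t)$ with independent randomization, states stay independent within and across clusters. Hence the directing measure is a.s. the deterministic law, by Lemma \ref{lem8} and the strong law. This is the same "propagation of chaos" remark made after Theorem \ref{thm9}. Consequently the information $\tilde{\mu}^{1:M}_t$ in the dCMF IS coincides with $\mu^{\rep_{1:M}}_t$, and the map is well defined in both directions.

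\textbf{Step 2: costs agree.} The policy $\Phi(\pi^{\rep})$ is exactly the policy induced, as in Theorem \ref{thm9}, by the deterministic open-loop mean-field policy $\tilde{\gamma}^j_t:=\pi^{\rep_j}_t\otimes\tilde{\mu}^j_t$ along the flow. Theorem \ref{thm10} then gives $J_T(\Phi(\pi^{\rep}),\nu_0)=\wt{J}_T(\tilde{\gamma},\nu_0)=\sum_t\beta^t\sum_j\int c^j(x,u,\tilde{\mu}^{1:M}_t)\tilde{\theta}^j_t(dx\times du)$. By Step 1 this equals $\E^{\pi^{\rep}}_{\nu_0}\big[\sum_t\beta^t\sum_jc^j(x^{\rep_j}_t,u^{\rep_j}_t,\mu^{\rep_{1:M}}_t)\big]=J^{\rep}_T(\pi^{\rep},\nu_0)$, reading the cost argument as the full array $\mu^{\rep_{1:M}}_t$. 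The infinite-horizon case follows by monotone convergence as in earlier proofs.

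\textbf{Main obstacle.} I expect the main difficulty to be Step 1's justification that, for a general decentralized C-Sym policy, the directing-measure flow is deterministic. Theorem \ref{thm10} is stated only for policies induced by $\wt{\mf{G}}$-policies. To handle this, I would first show that such a policy is induced by the deterministic Markov mean-field policy $\tilde{\gamma}_t(\mu):=\pi^{\cdot}_t(\cdot|\cdot,\mu)\otimes\mu$, which is measurable in $\mu$. Then Theorem \ref{thm9}'s induction applies verbatim.
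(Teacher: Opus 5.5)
Your proposal is correct and follows essentially the same route as the paper. It uses the same bijection $\tilde{\pi}^i_t=\pi^{\rep_{C(i)}}_t$, the same induction showing that the deterministic directing-measure flow under $\wt{F}_j$ coincides with the representative law flow (so $\tilde{\theta}^j_t=\mc{L}(x^{\rep_j}_t,u^{\rep_j}_t)$), and the same equality of costs through $\wh{c}$. Your two additions make explicit what the paper's proof leaves implicit: that a general Markov decentralized C-Sym policy is induced by a deterministic mean-field policy, so Theorems \ref{thm9}--\ref{thm10} apply, and that Theorem \ref{thm10} is what passes from $\wt{J}_T$ to $J_T$.
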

\noindent\textbf{Proof}. Consider a policy $\pi^{\rep_{1:M}}$ for
 $\mf{G}^{\rep}$, and the bijection given by
 $\tilde{\pi}^i_t=\pi^{\rep_{C(i)}}_t$ for all $i\in\mbb{N}$, $t\geq 0$. Then
 the resulting joint policy $\tilde{\pi}$ for $\mf{G}^\infty$ is decentralized
 and C-Sym, so $(x^{1:\infty}_t,u^{1:\infty}_t)$ is C-Ex $\forall\:t\geq 0$. As
 was seen by the induction argument in the proof of Theorem \ref{thm9}, the
 resulting directing measures
 $\{(\tilde{\mu}^{1:M}_t,\tilde{\theta}^{1:M}_t)\}_{t\geq 0}$ admit the
 deterministic flow given in (\ref{eq139}). Additionally, since both
 $\tilde{\mu}^{1:M}_0=\nu_0=\mu^{\rep_{1:M}}_0$ and
 $\tilde{\theta}^{1:M}_0=\pi^{\rep_{1:M}}_0\otimes\tilde{\mu}^{1:M}_0$ is
 deterministic, we have
 $\mc{L}(x^{\rep_{C(i)}}_0,u^{\rep_{C(i)}}_0)=\tilde{\theta}^{C(i)}_0=\mc{L}(x^i_0,u^i_0)$
 $\forall\:i\in\mbb{N}$. Combining these, $\forall\:j\in V$ we have
\begin{align}
  \tilde{\mu}^j_1(A)=\wt{F}_j(\tilde{\mu}^{1:M}_0,\tilde{\theta}^{1:M}_0)(A)&=\int_{\mbb{X}^j\times\mbb{U}^j}\mc{T}_j(A|x,u,\tilde{\mu}^{1:M}_0)\tilde{\theta}^j_0(dx\times du)\notag\\
  &=\int_{\mbb{X}^j\times\mbb{U}^j}\mc{T}_j(A|x,u,\mu^{\rep_{1:M}}_0)\mc{L}(x^{\rep_j}_0,u^{\rep_j}_0)(dx\times du)\notag\\
  &=\mc{L}(x^{\rep_j}_1|\mu^{\rep_{1:M}}_0)(A)\label{eq143}
\end{align}
$\forall\:A\in\mc{B}(\mbb{X}^j)$. Now, since $\mu^{\rep_{1:M}}_0$ is
deterministic, the above implies
$\tilde{\mu}^j_1=\mc{L}(x^{\rep_j}_1)=\mu^{\rep_j}_1$ $\forall\:j\in V$. Thus,
by repeating the above arguments (i.e.,
$\tilde{\theta}^{1:M}_1=\pi^{\rep_{1:M}}_1\otimes\tilde{\mu}^{1:M}_1$ is
deterministic, and so
$\mc{L}(x^{\rep_{C(i)}}_1,u^{\rep_{C(i)}}_1)=\mc{L}(x^i_1,u^i_1)$
$\forall\:i\in\mbb{N}$ and so the computations in (\ref{eq142}) yield
$\tilde{\mu}^{1:M}_2=\mu^{\rep_{1:M}}_2$) we can recursively show
$\tilde{\mu}^{j}_t=\mc{L}(x^{\rep_j}_t)$ and
$\tilde{\theta}^{j}_t=\mc{L}(x^{\rep_j}_t,u^{\rep_j}_t)$ for all $t\geq 0$.
Consequently, for the finite horizon,
\begin{align*}
  J^{\rep}_T(\pi^{\rep_{1:M}},\nu_0)&=\sum_{t=0}^{T-1}\beta^t\sum_{j\in V}\int_{\mbb{X}^j\times\mbb{U}^j}c^j(x^{\rep_j}_t,u^{\rep_j}_t,\mu^{\rep_{1:M}}_t)\tilde{\theta}^j_t(dx^{\rep_j}_t\times du^{\rep_j}_t)\\
  &=\sum_{t=0}^{T-1}\beta^t\wh{c}(\tilde{\mu}^{1:M}_t,\tilde{\theta}^{1:M}_t)\\
  &=\wt{J}_T(\tilde{\pi},\nu_0)
\end{align*}
and the same arguments hold for the infinite horizon, owing to the discount
factor. If instead we had begun with a C-Sym, decentralized policy $\tilde{\pi}$
for $\mf{G}^\infty$, then the McKean-Vlasov policy would be
$\pi^{\rep_j}_t=\tilde{\pi}_t^{k_j}$, where $k_j$ is the sole element of
$\mc{C}^M_j$ $\forall\:j\in V$, $t\geq 0$, and the remaining arguments would be
precisely as above.\hfill{$\qed$}\\[5pt]
\indent As a consequence of the reasoning of corollary \ref{cor2}, the Bellman
recursions developed for $\wt{\mf{G}}$ actually coincide with the same for a
measure-valued representation of the decentralized McKean-Vlasov team. Moreover,
a policy satisfying these $\as$ can be freely converted between a solution for
$\mf{G}^{\rep}$ and a dCMF decentralized, C-Sym solution for
$\mf{G}^\infty$.\\[5pt]
\indent For our second concern on approximate solutions to $\mf{G}^N$, we will
show that truncating a dCMF decentralized, C-Sym solution for $\mf{G}^\infty$
yields a policy which is near-optimal within the class of admissible (even
centralized) policies provided $N\geq M$ is sufficiently large. 
\begin{corollary}\label{cor3} Let assumption \ref{ass1} hold. Then,
  $\exists\:\{\pi^{N,\ast}\}_{N\geq M}$ with $\pi^{N,\ast}\in\Pi^{N,T}_{\csym}$
  $\forall\:N\geq M$ decentralized under the dCMF IS such that
  $\forall\:\varepsilon>0$, $\exists\:K\geq M$ such that $N\geq K$ implies
  \begin{align*}
    J^{N,\ast}_T(\nu_0)\leq J^N_T(\pi^{N,\ast},\nu_0)\leq J^{N,\ast}_T(\nu_0)+\varepsilon.
  \end{align*}
\end{corollary}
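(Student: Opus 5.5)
The plan is to take $\pi^{N,\ast}:=\tilde{\pi}^\ast|_N$, where $\tilde{\pi}^\ast$ is the decentralized C-Sym policy for $\mf{G}^\infty_T$ induced (as in Theorem \ref{thm9}) by an optimal deterministic Markov policy $\tilde{\gamma}^{1:M,\ast}$ for $\wt{\mf{G}}_T$. Since $\tilde{\pi}^\ast$ is a product of identical within-cluster kernels $\tilde{\pi}^{j}_t(du^i|x^i,\tilde{\mu}^{1:M}_t)$, and the flow $\tilde{\mu}^{1:M}_{0:T-1}$ is deterministic and computable offline from $\wt{F}$, the same kernels can be applied to the first $N$ agents. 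This gives an element of $\Pi^{N,T}_{\csym}$ that is decentralized under the dCMF IS. The lower bound $J^{N,\ast}_T(\nu_0)\le J^N_T(\pi^{N,\ast},\nu_0)$ is then trivial, so the task reduces to proving
\[\limsup_{N\to\infty}\big(J^N_T(\pi^{N,\ast},\nu_0)-J^{N,\ast}_T(\nu_0)\big)\le 0 .\]

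\textbf{Upper bound on the truncated policy.} One caveat is that the restriction $\tilde{\pi}^\ast|_N$ in the sense of $\mf{G}^\infty$ is not the same as running $\pi^{N,\ast}$ in the genuine $N$-agent system. In the $N$-agent system the dynamics are driven by the empirical measures $\mu^{1:M}[x^{1:N}_t]$, not by the directing measures. I would therefore prove a propagation-of-chaos statement by induction on $t$:
\[\mathcal{L}\big(\mu^{1:M}[x^{1:N}_t],\,\mu^{1:M}[(x^{1:N}_t,u^{1:N}_t)]\big)\longrightarrow\delta_{(\tilde{\mu}^{1:M}_t,\tilde{\theta}^{1:M}_t)}\quad\text{for } 0\le t<T .\]
The base case $t=0$ follows from Lemma \ref{lem8} and the i.i.d.\ initialization. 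For the action step, conditional on the states the actions are independent with kernel $\tilde{\pi}^j_t(\cdot|x^i,\tilde{\mu}^{1:M}_t)$. The state-action empirical measure therefore concentrates on $\tilde{\pi}^j_t\otimes\mu^j[x^{1:N}_t]$; this uses a conditional law of large numbers and Assumption \ref{ass1}(iv), which ensures $N_j\to\infty$.

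The inductive step from $t$ to $t+1$ works the same way. Conditional on time-$t$ quantities, the next states are independent with kernels $\mathcal{T}_j(\cdot|x^i,u^i,\mu^{1:M}[x^{1:N}_t])$. Their empirical measure is therefore close to $\int\mathcal{T}_j(\cdot|x,u,\mu^{1:M}_{t,N})\,\theta^j_{t,N}(dx\,du)$. By the joint continuity of $\wt{F}$ (Lemma \ref{lem11}) and the induction hypothesis, this converges to $\wt{F}_j(\tilde{\mu}^{1:M}_t,\tilde{\theta}^{1:M}_t)=\tilde{\mu}^j_{t+1}$. Once the induction is complete, Lemma \ref{lem1} and the continuity and boundedness of $\wh{c}$ give
\[J^N_T(\pi^{N,\ast},\nu_0)\to\wt{J}_T(\tilde{\gamma}^{1:M,\ast},\nu_0)=J_T(\tilde{\pi}^\ast,\nu_0)=J^\ast_T(\nu_0),\]
where the equalities are Theorems \ref{thm10} and \ref{thm11}.

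\textbf{Lower bound on the finite optima.} I would show $\liminf_N J^{N,\ast}_T(\nu_0)\ge J^\ast_T(\nu_0)$. The proofs of Theorems \ref{thm7} and \ref{thm11} already do this along a subsequence, but the argument should be run for an arbitrary subsequence. From any subsequence, take C-Ex$^N$ optimal policies (Theorem \ref{thm6}) and extract a further subsequence via Lemma \ref{lem5}. Theorem \ref{thm12}(ii) together with the inequality (\ref{eq150}) then bounds the limit from below by the cost of a C-Ex limit. The argument in the proof of Theorem \ref{thm11} realizes that cost by an admissible policy for $\wt{\mf{G}}_T$, so it is at least $\wt{J}^\ast_T=J^\ast_T$. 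Combining the two bounds gives $J^N_T(\pi^{N,\ast},\nu_0)-J^{N,\ast}_T(\nu_0)\to0$, which yields the required $K$ for each $\varepsilon$.

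\textbf{Main obstacle.} The main obstacle is the inductive step of the propagation-of-chaos argument. The policy kernels $\tilde{\pi}^j_t(\cdot|x,\tilde{\mu})$ arise from a disintegration and need not be continuous in $x$, so weak convergence of the empirical state measures does not directly transfer through $x\mapsto\tilde{\pi}^j_t(\cdot|x)$. My first attempt would avoid continuity in $x$ by comparing directly with an auxiliary i.i.d.\ system. Couple the $N$-agent system with $N$ independent copies of the McKean--Vlasov representatives of Corollary \ref{cor2}, using the same disturbances and the same action randomization. Then bound the discrepancy through the continuity of $f_j$ in the measure argument together with concentration of the i.i.d.\ empirical measures.

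If discontinuity in $x$ still breaks the coupling, the fallback is as follows. Approximate $\tilde{\pi}^j_t$ in $L^1(\tilde{\mu}^j_t)$ by kernels continuous in $x$, which is possible since the spaces are compact, and use the fact that this changes the mean-field cost by an arbitrarily small amount. Note that the corollary only asserts existence of some near-optimal family $\pi^{N,\ast}$, so replacing $\tilde{\pi}^\ast$ by such an approximation is permitted.
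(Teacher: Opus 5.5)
\textbf{Comparison with the paper.} Your proposal is correct in outline, and its decomposition is the paper's. You truncate the policy induced by an optimal deterministic policy for $\wt{\mf{G}}_T$, show $J^{N,\ast}_T(\nu_0)\to J^\ast_T(\nu_0)$ and $J^N_T(\pi^{N,\ast},\nu_0)\to J^\ast_T(\nu_0)$, and split $\varepsilon$. Your lower bound on $J^{N,\ast}_T$ is the same subsequence argument as in (\ref{eq135}) and (\ref{eq142}).

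The real difference is in the second limit. The paper reads $\pi^{N,\ast}=\pi^\ast|_N$ in the sense of (\ref{eq8}), as the $N$-agent marginal of the infinite-population strategic measure, whose agents are driven by the directing measures. The limit then follows at once from Theorems \ref{thm10} and \ref{thm11}. But this tacitly identifies that marginal with the genuine $N$-agent closed loop. Your propagation-of-chaos argument runs in the actual $N$-agent system driven by $\mu^{1:M}[x^{1:N}_t]$. This is what makes $J^N_T(\pi^{N,\ast},\nu_0)$ the cost of an admissible policy for $\mf{G}^N_T$, so the left inequality becomes meaningful. It also gives $\limsup_N J^{N,\ast}_T\leq J^\ast_T$ directly, so you never need the first inequality in (\ref{eq135}).

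\textbf{The discontinuity obstacle is real.} Take one cluster with $\mbb{X}=[0,1]$, $\mbb{U}=\{a,b\}$, $\nu_0$ uniform, $f(x,u,\mu,w)=\int y\,\mu(dy)$ and $c(x,u,\mu)=\1{u=b}$. Then $\tilde{\mu}_1=\delta_{1/2}$. The kernel $\tilde{\pi}_1(\cdot|x)=\delta_a$ at $x=1/2$ and $\delta_b$ elsewhere is a valid disintegration of an optimal $\tilde{\theta}_1$. Yet every $N$-agent state at $t=1$ equals the empirical mean of $x^{1:N}_0$, which differs from $1/2$ almost surely. So for $T\geq 2$ the truncation costs at least $\beta$ while $J^{N,\ast}_T(\nu_0)=0$, for every $N$. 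Your coupling with McKean--Vlasov copies therefore cannot handle an arbitrary version, and you should commit to the fallback.

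\textbf{Two points to make precise in the fallback.}

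First, choosing at each stage, independently, a continuous kernel close to $\tilde{\pi}^j_t$ in $L^1(\tilde{\mu}^j_t)$ does not suffice for $t\geq 2$. Once earlier stages are perturbed, the new flow $\hat{\mu}_t$ is only weakly close to $\tilde{\mu}_t$, because the measure argument of $\mc{T}_j$ destroys total-variation closeness. A continuous kernel can agree with $\tilde{\pi}^j_t$ $\tilde{\mu}^j_t$-a.e.\ and still switch to a bad action within that weak distance, for example when $\tilde{\mu}^j_t$ is a Dirac mass. The fix is to choose tolerances backward in time. Fix the continuous kernel at stage $t$ first. Then let continuity of $\mu\mapsto\hat{\pi}_t\otimes\mu$, together with that of $\wt{F}$ and $\wh{c}$ (Lemma \ref{lem11}), dictate how accurate stages $<t$ must be.

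Second, the corollary asks for one sequence $\{\pi^{N,\ast}\}$ that works for every $\varepsilon$, so you must diagonalize. Take $\hat{\pi}^{(k)}$ with mean-field cost within $1/k$ of $J^\ast_T(\nu_0)$. Take thresholds $K_1<K_2<\cdots$ beyond which both the $N$-agent cost of $\hat{\pi}^{(k)}|_N$ and $J^{N,\ast}_T(\nu_0)$ lie within $2/k$ of $J^\ast_T(\nu_0)$. Then set $\pi^{N,\ast}:=\hat{\pi}^{(k)}|_N$ for $K_k\leq N<K_{k+1}$. Each such policy shares its own deterministic flow, which is still dCMF-type information.

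With these additions your argument is complete, and it is more careful than the paper's on the upper bound.
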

\noindent {\bf Proof}. Consider the finite horizon setup. Careful review of
(\ref{eq135}) in the proof of Theorem \ref{thm7} and (\ref{eq142}) in the proof
of Theorem \ref{thm11} reveals that
$\lim_{N\rightarrow\infty}J^{N,\ast}_T(\nu_0)=J^\ast_T(\nu_0)$. Further, with
$\pi^\ast\in\Pi^T_{\csym}$ an optimal, dCMF decentralized policy for
$\mf{G}^\infty_T$, we have
$\lim_{N\rightarrow\infty}J^N_T(\pi^{N,\ast},\nu_0)=J^\ast_T(\nu_0)$, where
$\pi^{N,\ast}$ is the restriction of $\pi^\ast$ to the first $N\geq M$ agents.
Hence, fixing $\varepsilon>0$, $\exists\:K\geq M$ so that $N\geq K$ implies both
\begin{align*}
  \inf_{\pi\in\bar{\Pi}^{N,T}}J^N_T(\pi,\nu_0)\geq &J^\ast_T(\nu_0)-\frac{\varepsilon}{2},\quad\text{and}\\
  &J^\ast_T(\nu_0)\geq J^N_T(\pi^{N,\ast},\nu_0)-\frac{\varepsilon}{2}.
\end{align*}
Combining these, we produce the upper bound $J^N_T(\pi^{N,\ast},\nu_0)\leq
J^{N,\ast}_T(\nu_0)+\varepsilon$. The result follows. As has been the case
throughout, the infinite horizon case requires no
innovation.\hfill{$\qed$}\\[5pt]
\indent Thus, the asymptotically optimal decentralized policies are obtained as
a simple truncation of the solution for $\mf{G}^\infty$ (or $\mf{G}^{\rep}$). It
should be emphasized that the dCMF IS used by the truncated policies is the same
as for the mean-field; agents have access to the state directing measures
$\tilde{\mu}^{1:M}_{t}$, which are deterministic and satisfy
$\tilde{\mu}^j_t=\mc{L}(x^{\rep_j}_t)$ (as opposed to a finite-population
estimate via the empirical measures $\mu^{1:M}[x^{1:N}_t]$).

\section{Conclusion}\label{sec13}
In this paper, we provide a rigorous study of the structure and information
demands of optimal policies for stochastic teams comprised of a finite number of
a finite number of agent clusters in the finite and infinite population regimes.
We find that, without \textit{a priori} constraints on the policy structure or
IS, there exists a C-Sym joint policy, conditionally independent and fully
decentralized (under cluster mean-field information sharing) which is optimal
for $\mf{G}^\infty$, and asymptotically optimal for $\mf{G}^N$ as
$N\rightarrow\infty$. This policy is induced by one for a corresponding
mean-field MDP, and obtainable via a mean-field dynamic program. Moreover, this
mean-field MDP is equivalent to a decentralized McKean-Vlasov-type team of
cluster representative agents, which justifies the optimality of the latter as a
representation for the infinite ensemble $\mf{G}^\infty$. We expect
$\wt{\mf{G}}$ to serve as a platform for future developments on learning
algorithms, and suggest our model could serve as an intermediate aiding
structural results for graphon mean-field teams with unbounded agent diversity.
\appendix
\section{Measurable Selection Conditions for \texorpdfstring{$\wh{\mf{G}}^N$}{TEXT}}\label{app1}
First, let us recall the definition of upper semicontinuity for set-valued
functions (i.e., multifunctions).
\begin{definition}[Upper Semicontinuity of Multifunctions]\label{def7} Let
  $\mbb{Y}$, $\mbb{V}$ be standard Borel, and consider a map
  $y\mapsto\mbb{V}(y)$ such that $\mbb{V}(y)\subset\mbb{V}$
  $\forall\:y\in\mbb{Y}$. Then $\mbb{V}(\cdot)$ is called upper semicontinuous
  if the following holds $\forall\:y\in\mbb{Y}$: if $\{y_n\}_{n\geq
  1}\subset\mbb{Y}$ satisfies $y_n\rightarrow y $ as $n\rightarrow\infty$ and
  $v_n\in\mbb{V}(y_n)$ for all $n\geq 1$, then we can extract a convergent
  subsequence $\{v_{n_k}\}_{k\geq 1}$ so that
  $\lim_{k\rightarrow\infty}v_{n_k}\in\mbb{V}(y)$.
\end{definition}
Armed with this definition, the following theorem is a classical result
verifying the measurable selection hypothesis for the MDP $\wh{\mf{G}}^N$
(although, we invoke an analogous result for $\mf{G}^N$ and $\wt{\mf{G}}$). In
particular, the following is sufficient to guarantee the existence of minimizing
measurable selectors such that the corresponding Bellman recursions are
well-defined. For reference, see \cite[$\S$3.3, Theorem
3.3.5]{Hernandez-Lerma_Lasserre_1996}.
\begin{theorem}\label{thm2} Let assumption \ref{ass1}
  hold and consider $\wh{\mf{G}}^N$ for $N\geq M$ fixed. Then $\wh{\mbb{X}}^N$, $\wh{\mbb{U}}^N$ are compact,
  $\wh{c}:\mc{P}(\prod_{j\in V}\mbb{X}^j)\times\mc{P}(\prod_{j\in V}\mbb{X}^j\times\mbb{U}^j)\rightarrow\mbb{R}_+$ is jointly continuous and bounded,
  $x^{1:N}\mapsto\wh{\mbb{U}}^N(\mu^{1:M}[x^{1:N}])$ is a compact-valued, upper semicontinuous multifunction,
  and the transition kernel $\wh{\mc{T}}$ is weak Feller. 
\end{theorem}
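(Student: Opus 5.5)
We verify the four hypotheses of \cite[Theorem 3.3.5]{Hernandez-Lerma_Lasserre_1996} for $\wh{\mf{G}}^N$ in turn, always working in the weak topology. Throughout, $N\geq M$ is fixed, so each $N_j$ is a fixed finite integer.

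\textbf{Compactness.} For each $j\in V$, the map $y^{1:N_j}\mapsto\mu[y^{1:N_j}]$ is weakly continuous. Since $\mbb{X}^j$ is compact by Assumption \ref{ass1}(i), $\mc{P}^{N_j}_E(\mbb{X}^j)$ is the continuous image of $(\mbb{X}^j)^{N_j}$ and is therefore compact. Likewise $\mc{P}^{N_j}_E(\mbb{X}^j\times\mbb{U}^j)$ is compact. Finite products preserve compactness, so $\wh{\mbb{X}}^N$ and $\wh{\mbb{U}}^N$ are compact.

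\textbf{Continuity and boundedness of $\wh{c}$.} Boundedness is immediate, because $\wh{c}$ is a finite sum of integrals of the bounded functions $c^j$ against probability measures. For continuity, take $(\mu_n,\theta_n)\to(\mu,\theta)$. We write
\[
\int c^j(x,u,\mu_n)\,\theta^j_n(dx\,du)=\int g_n\,d\theta^j_n,\qquad g_n:=c^j(\cdot,\cdot,\mu_n).
\]
By joint continuity of $c^j$, $g_n\to c^j(\cdot,\cdot,\mu)$ continuously. On the compact set $\mbb{X}^j\times\mbb{U}^j$, continuous convergence is the same as uniform convergence. Uniform convergence combined with $\theta^j_n\to\theta^j$ weakly yields convergence of the integrals; alternatively one can appeal to the generalized dominated convergence theorem \cite{Serfozo_1982}, as in Lemma \ref{lem11}.

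\textbf{The constraint multifunction.} The map $x^{1:N}\mapsto\wh{\mbb{U}}^N(\mu^{1:M}[x^{1:N}])$ equals the image of the compact set $\{x^{1:N}\}\times\eas$ under the continuous map $(x,u)\mapsto\mu^{1:M}[(x,u)]$, so it is compact-valued. For upper semicontinuity, let $x_n\to x$ and $\theta_n\in\wh{\mbb{U}}^N(\mu^{1:M}[x_n])$. We write $\theta_n=\mu^{1:M}[(x'_n,u_n)]$, where $x'_n$ is a cluster permutation of $x_n$. There are finitely many cluster permutations, so passing to a subsequence we may fix one permutation $\sigma$. By compactness of $\eas$ we may also assume $u_n\to u$. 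Then $x'_n\to x^{\sigma}$, and continuity gives $\theta_n\to\mu^{1:M}[(x^\sigma,u)]\in\wh{\mbb{U}}^N(\mu^{1:M}[x])$. The same argument applies if the multifunction is viewed as a function of $\mu^{1:M}$: empirical measures converging weakly in $\mc{P}^{N_j}_E$ admit convergent representative atom tuples, up to reordering, by compactness.

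\textbf{The weak Feller property.} This is the main obstacle. The kernel $\wh{\mc{T}}^N$ is defined through a representative $(\hat x,\hat u)$ of $(\mu,\theta)$, so we must show that $\int\phi\,d\wh{\mc{T}}^N(\cdot|\mu_n,\theta_n)$ converges for every $\phi\in C_b(\wh{\mbb{X}}^N)$. We proceed in three steps.
\begin{enumerate}[(a)]
\item If $\theta_n\to\theta$ with $\theta_n,\theta\in\mc{P}^{N_j}_E$, then along a subsequence we can choose representative tuples $(\hat x_n,\hat u_n)\to(\hat x,\hat u)$. Starting from any representatives, compactness gives a convergent subsequence whose limit has empirical measure $\theta$ by continuity. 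Since the kernel is insensitive to the choice of representative, this suffices.
\item We have
\[
\int\phi\,d\wh{\mc{T}}^N=\int\phi\bigl(\mu^{1:M}[F^N(\hat x_n,\hat u_n,\mu_n,w)]\bigr)\,\nu(dw).
\]
The integrand converges pointwise in $w$, by continuity of $f_j$ in its first three arguments (Assumption \ref{ass1}(ii)) and continuity of $\mu^{1:M}[\cdot]$. Dominated convergence, using the boundedness of $\phi$, then gives convergence of the integrals along the subsequence.
\item Every subsequence has a further subsequence converging to the same limit, so the full sequence converges.
\end{enumerate}
The delicate points are that the kernel is well defined independently of the representative (established in \S\ref{sec5}), and that representatives can be chosen to converge, which is exactly the finite-permutation and compactness argument in step (a).
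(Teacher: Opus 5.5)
Your proof is correct. Its skeleton matches the paper's: continuous images of compact sets for compactness, subsequence extraction in $\ess\times\eas$ for the constraint, and convergent representatives plus dominated convergence for weak Feller. It differs in three places worth noting.

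\emph{Continuity of $\wh{c}$.} The paper again passes to convergent ensemble representatives $(\hat{x}^{1:N}_{t,k},\hat{u}^{1:N}_{t,k})$ and uses continuity of each $c^j$. That argument only makes sense on $\wh{\mbb{X}}^N\times\wh{\mbb{U}}^N$ at fixed $N$. Your argument is that continuous convergence of $c^j(\cdot,\cdot,\mu_n)$ is uniform on the compact set $\mbb{X}^j\times\mbb{U}^j$, which you then pair with weak convergence of $\theta^j_n$. This gives joint continuity on all of $\mc{P}(\prod_{j\in V}\mbb{X}^j)\times\mc{P}(\prod_{j\in V}\mbb{X}^j\times\mbb{U}^j)$. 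That is what the statement literally asserts, and it is what is actually needed later: Lemma \ref{lem11} and the limit inequality (\ref{eq150}) both involve limiting measures that are not empirical.

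\emph{Weak Feller property.} The paper argues through the quotient map (\ref{eq24}), identifying $\wh{\mbb{U}}^N$ with $(\ess\times\eas)/\!\sim$ and concluding that representatives converge. Your steps (a)--(c) pick arbitrary representatives, extract a subsequence whose limit represents $\theta$ by continuity, and close with the subsequence principle plus representative-independence of $\wh{\mc{T}}^N$. This is the elementary content of that homeomorphism. It also makes explicit the lift from convergence in the quotient to convergence of representatives in $\ess\times\eas$, which the paper leaves implicit.

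\emph{Upper semicontinuity.} You parametrize by $x^{1:N}$, as in the statement, and use finiteness of $\mc{S}_{\mc{C}^N}$ to freeze the permutation along a subsequence. The paper parametrizes by $\mu^{1:M}$ and instead checks the marginal constraint on the subsequential limit. The two routes are equivalent.
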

\noindent {\bf Proof}. Compactness of the state and action spaces follows $\wh{\mbb{X}}^N=\mu^{1:M}[\ess]$, $\wh{\mbb{U}}^N=\mu^{1:M}[\ess\times\eas]$ since $\ess$ and $\eas$ are each compact and $\mu^{1:M}$ is a weakly continuous function.
For the weak Feller property for $\wh{\mc{T}}^N$, let $\{(\mu^{1:M}_{t,k},\theta^{1:M}_{t,k})\}_{k\geq 1}\subseteq\wh{\mbb{X}}^N\times\wh{\mbb{U}}^N$ be a sequence where each pair $(\mu^{1:M}_{t,k},\theta^{1:M}_{t,k})$ is induced by an
ensemble $(x^{1:N}_{t,k}, u^{1:N}_{t,k})\in\ess\times\eas$ and both $\theta^{1:M}_{t,k}\rightarrow\theta^{1:M}_t$ and $\mu^{1:M}_{t,k}\rightarrow\mu^{1:M}_{t}$ jointly
as $k\rightarrow\infty$.
In conjunction, consider the quotient space induced by the equivalence relation
\begin{align*}
  (x^{1:N},u^{1:N})\sim(\hat{x}^{1:N},\hat{u}^{1:N})\quad\Longleftrightarrow\quad \exists\,\sigma\in\mc{S}_{\mc{C}^N}\quad\text{s.t.}\quad (x^{\sigma(1):\sigma(N)},u^{\sigma(1):\sigma(N)})=(\hat{x}^{1:N},\hat{u}^{1:N}) 
\end{align*}
$\forall\:(x^{1:N},u^{1:N}),(\hat{x}^{1:N},\hat{u}^{1:N})\in\ess\times\eas$. That this
indeed forms an equivalence relation is evident from  the closedness of $\mc{S}_{\mc{C}^N}$ under composition and that any $\sigma\in\mc{S}_{\mc{C}^N}$
is a bijection on $\mc{N}$. For
$k\geq 1$, denote the representatives of $x^{1:N}_{t,k}$ and $u^{1:N}_{t,k}$
with $\hat{x}^{1:N}_{t,k}$ and $\hat{u}^{1:N}_{t,k}$, respectively, and equip
$\ess\times\eas/\sim$ with the quotient space
topology such that the natural projection $(x^{1:N}_{t,k},u^{1:N}_{t,k})\mapsto
(\hat{x}^{1:N}_{t,k},\hat{u}^{1:N}_{t,k})$ is rendered continuous. Notice also
that the map
\begin{align}
  (x^{1:N}_{t,k},u^{1:N}_{t,k})\longmapsto (\mu^{1:M}_{t,k},\theta^{1:M}_{t,k})=(\mu^{1:M}[x^{1:N}_{t,k}],\mu^{1:M}[(x^{1:N}_{t,k},u^{1:N}_{t,k})])\label{eq24}
\end{align}
is a (weakly) continuous surjection, since if $j\in V$, $g^j_1\in C_b(\mbb{X}^j)$ and $g^j_2\in
C_b(\mbb{X}^j\times\mbb{U}^j)$, then both
\begin{align*}
  \int_{\mbb{X}^j}g^j_1(x)\mu^j[x^{1:N}_{k}](dx)&=\frac{1}{N_j}\sum_{i\in\mc{C}_j}g^j_1(x^i_{k})\\
  &\overset{k\rightarrow\infty}{\longrightarrow}\frac{1}{N_j}\sum_{i\in\mc{C}_j}g^j_1(x^i)=\int_{\mbb{X}^j}g^j_1(x)\mu^{j}[x^{1:N}](dx),\quad\text{and}\\
  \int_{\mbb{X}^j\times\mbb{U}^j}g^j_2(x,u)\mu^j[(x^{1:N}_{k},u^{1:N}_{k})]&=\frac{1}{N_j}\sum_{i\in\mc{C}_j}g^j_2(x^i_{k},u^i_{k})\\
  &\overset{k\rightarrow\infty}{\longrightarrow}\frac{1}{N_j}\sum_{i\in\mc{C}_j}g^j_2(x^i,u^i)
  =\int_{\mbb{X}^j\times\mbb{U}^j}g^j_2(x,u)\mu^j[(x^{1:N},u^{1:N})](dx\times du)
\end{align*}
 whenever $(x^{1:N}_k,u^{1:N}_k)\rightarrow (x^{1:N},u^{1:N})$
as $k\rightarrow\infty$ in $\ess\times\eas$. Surjectivity is clear from the definitions of $\wh{\mbb{X}}^N$, $\wh{\mbb{U}}^N$. This makes (\ref{eq24}) a continuous
quotient mapping, implying that the spaces $\wh{\mbb{U}}^N$ and
$\ess\times\eas/\sim$ are homeomorphic (for details
on this, see \cite[theorem 4.2]{Armstrong_1983}, for example). Therefore,
$\theta^{1:M}_{t,k}\rightarrow\theta^{1:M}_{t}$ implies that the sequence of
representatives $(\hat{x}^{1:N}_{t,k},\hat{u}^{1:N}_{t,k})$ converges to some
$(\hat{x}^{1:N}_t,\hat{u}^{1:N}_{t})\in
\ess\times\eas/\sim$ as $k\rightarrow\infty$.\\[5pt]
\indent Now, fixing $g\in C_b(\wh{\mbb{X}}^N)$, we compute
\begin{align}
  \int_{\wh{\mbb{X}}^N}g(\mu)\wh{\mc{T}}(d\mu|\mu^{1:M}_{t,k},\theta^{1:M}_{t,k})&=\int_{\wh{\mbb{X}}^N}g(\mu)\int_{\prod_{i\in\mc{N}}\mbb{W}^{C(i)}}\delta_{\mu^{1:M}[F^N(\hat{x}^{1:N}_{t,k},\hat{u}^{1:N}_{t,k},\mu^{1:M}_{t,k},w^{1:N}_t)]}(d\mu)\nu(dw^{1:N}_t)\notag\\
  &=\int_{\prod_{i\in\mc{N}}\mbb{W}^{C(i)}}g(\mu^{1:M}[F^N(\hat{x}^{1:N}_{t,k},\hat{u}^{1:N}_{t,k},\mu^{1:M}_{t,k},w^{1:N}_t)])\nu(dw^{1:N}_t)\label{eq18}\\
  &\overset{k\rightarrow\infty}{\longrightarrow}\int_{\prod_{i\in\mc{N}}\mbb{W}^{C(i)}}g(\mu^{1:M}[F^N(\hat{x}^{1:N}_{t},\hat{u}^{1:N}_{t},\mu^{1:M}_{t},w^{1:N}_t)])\nu(dw^{1:N}_t)\label{eq19}\\
  &=\int_{\wh{\mbb{X}}^N}g(\mu)\wh{\mc{T}}^N(d\mu|\mu^{1:M}_{t},\theta^{1:M}_{t})\label{eq20}
\end{align}
where each line is obtained as follows. Equality (\ref{eq18}) follows the
Fubini-Tonelli theorem, (\ref{eq19}) the dominated convergence theorem, since
$g$ is continuous and bounded, and $(\mu^{1:M}\circ
f^{1:N})(\cdot,\cdot,\cdot,w^{1:N}_t)$ is jointly continuous on
$\ess\times\eas\times\mc{P}(\mbb{X}^1)\times\mc{P}(\mbb{X}^2)\times\cdots\times\mc{P}(\mbb{X}^M)$ for any $w^{1:N}_t\in\prod_{i\in\mc{N}}\mbb{W}^{C(i)}$ fixed
(since $F^N$ is jointly continuous under assumption \ref{ass1} and $\mu^{1:M}[\cdot]$
is continuous following the continuity of (\ref{eq24})).
Lastly, (\ref{eq20}) is obtained by reversing the steps used to obtain
(\ref{eq18}) from the first expression. Thus, $\wh{\mc{T}}^N$ is weak Feller as
claimed.\\[5pt]
\indent The stagewise cost $\wh{c}$ is continuous, since
\begin{align}
  \lim_{k\rightarrow\infty}\wh{c}(\mu^{1:M}_{t,k},\theta^{1:M}_{t,k})&=\lim_{k\rightarrow\infty}\sum_{j\in V}\int_{\mbb{X}^j\times\mbb{U}^j}c^j(x^j,u^j,\mu^{1:M}_{t,k})\theta^j_{t,k}(dx^j\times du^j)\label{eq21}\\
  &=\lim_{k\rightarrow\infty}\sum_{j\in V}\frac{1}{N_j}\sum_{i\in\mc{C}_j}c^j(\hat{x}^i_{t,k},\hat{u}^i_{t,k},\mu^{1:M}[\hat{x}^{1:N}_{t,k}])\label{eq22}\\
  &=\sum_{j\in V}\frac{1}{N_j}\sum_{i\in\mc{C}_j}c^j(\hat{x}^i_t,\hat{u}^i_t,\mu[\hat{x}^{1:N}_t])\label{eq23}\\
  &=\wh{c}(\mu^{1:M}_t,\theta^{1:M}_t)\notag
\end{align}
where (\ref{eq21}) is the definition of $\wh{c}$ from lemma \ref{lem1},
(\ref{eq22}) uses the same convergent sequence of representatives for
$\mu^{1:M}_{t,k}$ and $\theta^{1:M}_{t,k}$ as before, and (\ref{eq23}) follows the joint continuity of
each $c^j$ from assumption \ref{ass1}.\\[5pt]
\indent Finally, we establish compactness and upper semicontinuity of the action constraint. For the former, simply observe
that with $x^{1:N}\in\ess$, the set $\{x^{1:N}\}\times\eas$ is compact
in $\ess\times\eas$, the map in (\ref{eq24}) is continuous, and the
image of $\{x^{1:N}\}\times\eas$ under this map is precisely $\wh{\mbb{U}}^N(\mu^{1:M}[x^{1:N}])$. For upper semicontinuity, let $\mu^{1:M}_{k}\in\wh{\mbb{X}}^N$ and $\theta^{1:M}_{k}\in\wh{\mbb{U}}^N(\mu^{1:M}_{k})$
for $k\geq 1$, and further suppose that $\mu^{1:M}_k\rightarrow\mu^{1:M}\in\wh{\mbb{X}}^N$ as $k\rightarrow\infty$. From $\{\theta^{1:M}_k\}_{k\geq 1}$, we can construct a corresponding sequence $\{(x^{1:N}_k,u^{1:N}_k)\}_{k\geq 1}\subseteq\ess\times\eas$ so that
$\theta^{1:M}_k=\mu^{1:M}[(x^{1:N}_k,u^{1:N}_k)]$ for $k\geq 1$. 
Assumption \ref{ass1} implies $\ess\times\eas$ is compact, so there exists a convergent subsequence $(x^{1:N}_{k_\ell},u^{1:N}_{k_\ell})\rightarrow (x^{1:N},u^{1:N})$ as $\ell\rightarrow\infty$
such that for $j\in V$, $g^j\in C_b(\mbb{X}^j\times\mbb{U}^j)$ we have
\begin{align*}
  \int_{\mbb{X}^j\times\mbb{U}^j}g^j(x,u)&\mu^j[(x^{1:N}_{k_\ell},u^{1:N}_{k_\ell})](dx\times du)=\frac{1}{N_j}\sum_{i\in\mc{C}_j}g^j(x^i_{k_\ell},u^i_{k_\ell})\\
  &\overset{\ell\rightarrow\infty}{\longrightarrow}\frac{1}{N_j}\sum_{i\in\mc{C}_j}g^j(x^i,u^i)=\int_{\mbb{X}^j\times\mbb{U}^j}g^j(x,u)\mu^j[(x^{1:N},u^{1:N})](dx\times du)
\end{align*}
so the corresponding subsequence of actions $\theta^{1:M}_{k_\ell}$ converges weakly to another $\theta^{1:M}$ with $\mbb{X}^j$-marginals agreeing with the elements of $\mu^{1:M}$. That is,
the subsequential limit satisfies $\theta^{1:M}\in\wh{\mbb{U}}^N(\mu^{1:M})$, so the action constraint is upper semicontinuous.\hfill{$\qed$}

\section{Extension of \texorpdfstring{$N$}{TEXT}-Cluster-Exchangeable Random Variables}\label{app2}
In this appendix, we provide a detailed proof of Theorem \ref{thm8}\\[5pt]
\noindent\textbf{Proof}. Fix $1\leq k_j\leq N_j$ and let
$I^1_j,I^2_j,\dots\overset{\iid}{\sim}\mc{U}(\{1,2,\dots,N_j\})$ for $j\in V$.
We first establish a standard inequality for the total variation distance: If $Y$ is a $\mbb{Y}$-valued random variable on a
probability space $(\Omega,\mc{F},P)$ and $B\in\mc{F}$, 
then $\|\mc{L}(Y)-\mc{L}(Y|B)\|_{\text{TV}}\leq (1-P(B))$. To see this, compute
\begin{align*}
 \|\mc{L}(X)-\mc{L}(X|B)\|_{\text{TV}}&=\sup_{A\in\mc{B}(\mbb{Y})}\left|P(Y^{-1}(A))-\frac{P(Y^{-1}(A)\cap B)}{P(B)}\right|\\
 &\leq\sup_{A\in\mc{B}(\mbb{Y})}\left(P(Y^{-1}(A)\cap B^c)+\frac{1-P(B)}{P(B)}P(Y^{-1}(A)\cap B)\right)
\end{align*}
using the law of total probability followed by the triangle inequality. Setting
$\xi_A:=P(Y^{-1}(A))$ and $\zeta_A:=P(Y^{-1}(A)\cap B)$ this can be expressed as
a convex combination
\begin{align}
  \sup_{A\in\mc{B}(\mbb{Y})}\bigg(P(Y^{-1}(A)\cap B^c)&+\frac{1-P(B)}{P(B)}P(Y^{-1}(A)\cap B)\bigg)\notag\\
  &=\sup_{A\in\mc{B}(\mbb{Y})}\xi_A\left(\frac{\xi_A-\zeta_A}{\xi_A}+\frac{1-P(B)}{P(B)}\frac{\zeta_A}{\xi_A}\right)\notag\\
  &\leq\sup_{A\in\mc{B}(\mbb{Y})}\max\left\{\xi_A\1{P(Y^{-1}(A)\cap B)=0},\xi_A\frac{1-P(B)}{P(B)}\1{P(Y^{-1}(A)\cap B^c)=0}\right\}\label{eq49}\\
  &\leq\sup_{A\in\mc{F}}\max\left\{P(A)\1{P(A\cap B)=0},P(A)\frac{1-P(B)}{P(B)}\1{P(A\cap B^c)=0}\right\}\label{eq50}
\end{align}
where (\ref{eq49}) holds since $(\xi_A-\zeta_A)/\xi_A=1$ precisely when $P(Y^{-1}(A)\cap B)=0$ and
$\zeta_A/\xi_A=1$ if and only if $P(Y^{-1}(A)\cap B^c)=0$, and (\ref{eq50}) is taking the supremum over a possibly finer $\sigma$-field.
This supremum is attained by either $B^c$ (for the first maximand) or $B$ (for the second) but produces the desired bound in each case.\\[5pt]
\indent For the main claim, define $Y^i:=\bar{Y}^{I_{C(i)}^i}$ for $i\in\mbb{N}$, whereby $Y^{\mc{C}^\infty_j}\overset{\iid}{\sim}\mu^j[\bar{Y}^{1:N}]$.
This implies $(Y^{\mc{C}^\infty_1},Y^{\mc{C}_2^\infty},\dots, Y^{\mc{C}^\infty_M})$ is
C-Ex. To see this, form finite subclusters
$\mc{C}^\prime_j$ comprised of the first $k_j$ elements of $\mc{C}_j^\infty$ for
$j\in V$. Let $\mc{C}^\prime$ denote the partition giving rise to this population.
Then, with $\sigma\in\mc{S}_{\mc{C}^\prime}$ and $A^j\in\mc{B}(\prod_{i=1}^{k_j}\mbb{Y}^j)$
for $j\in V$,
\begin{align*}
  &P(Y^{\sigma(\mc{C}^\prime_1)}\in A^1,Y^{\sigma(\mc{C}_2^\prime)}\in A^2,\dots,Y^{\sigma(\mc{C}^\prime_M)}\in A^M)\\
  &=\sum_{i^{\mc{C}^\prime_j}_j\subseteq\mc{C}^\prime_j,\;j\in V}P(Y^{i^{\sigma(\mc{C}^\prime_1)}_1}\in A^1,\dots, Y^{i^{\sigma(\mc{C}^\prime_M)}_M}\in A^K)P(I^{\mc{C}^\prime_1}_{1}=i^{\mc{C}^\prime_1}_1,\dots, I^{\mc{C}^\prime_M}_M=i^{\mc{C}^\prime_M}_M)\\
  &=\sum_{i^{\mc{C}^\prime_j}_j\subseteq\mc{C}^\prime_j,\;j\in V}P(Y^{i^{\mc{C}^\prime_1}_1}\in A^1,\dots, Y^{i^{\mc{C}^\prime_M}_M}\in A^K)P(I^{\mc{C}^\prime_1}_{1}=i^{\sigma(\mc{C}^\prime_1)}_1,\dots, I^{\mc{C}^\prime_M}_M=i^{\sigma(\mc{C}^\prime_M)}_M)\\
  &=\sum_{i^{\mc{C}^\prime_j}_j\subseteq\mc{C}^\prime_j,\;j\in V}P(Y^{i^{\mc{C}^\prime_1}_1}\in A^1,\dots, Y^{i^{\mc{C}^\prime_M}_M}\in A^K)P(I^{\mc{C}^\prime_1}_{1}=i^{\mc{C}^\prime_1}_1,\dots, I^{\mc{C}^\prime_M}_M=i^{\mc{C}^\prime_M}_M)\\
  &=P(Y^{\mc{C}^\prime_1}\in A^1,Y^{\mc{C}_2^\prime}\in A^2,\dots,Y^{\mc{C}^\prime_M}\in A^M)
\end{align*}
where the third equality holds due to the $I^{\mc{C}^\prime_j}_j$ being $\iid$
(and hence exchangeable) for each $j\in V$.\\[5pt]
\indent Finally, consider the event $\beta^{k_j,N_j}_j$ that
the $I^{\mc{C}^\prime_j}_j$ are pairwise distinct. Clearly,
$P(\beta^{k_j,N_j}_j)=\prod_{k=1}^{k_j-1}(1-k/N_j)$, and
$\mc{L}(Y^{\mc{C}^\prime_1},Y^{\mc{C}^\prime_2},\dots,Y^{\mc{C}^\prime_M}|\beta^{k_1,N_1}_1,\beta^{k_2,N_2}_2,\dots,\beta^{k_M,N_M}_M)=\mc{L}(\bar{Y}^{\mc{C}^\prime_1},\bar{Y}^{\mc{C}^\prime_2},\dots,\bar{Y}^{\mc{C}^\prime_M})$
due to the cluster-exchangeability of $\bar{Y}^{1:N}$. Using our previous estimate, we
obtain
\begin{align*}
  \|\mc{L}(\bar{Y}^{\mc{C}^\prime_1},\bar{Y}^{\mc{C}^\prime_2},&\dots, \bar{Y}^{\mc{C}^\prime_M})-\mc{L}(Y^{\mc{C}^\prime_1},Y^{\mc{C}^\prime_2},\dots, Y^{\mc{C}^\prime_M})\|_{\text{TV}}\\
  &=\|\mc{L}(\bar{Y}^{\mc{C}^\prime_1},\bar{Y}^{\mc{C}^\prime_2},\dots,\bar{Y}^{\mc{C}^\prime_M}|\beta^{k_1,N_1}_1,\beta^{k_2,N_2}_2,\dots,\beta^{k_M,N_M}_M)-\mc{L}(\bar{Y}^{\mc{C}^\prime_1},\bar{Y}^{\mc{C}^\prime_2},\dots,\bar{Y}^{\mc{C}^\prime_M})\|_{\text{TV}}\\
  &\leq 1-\prod_{j\in V}\prod_{k=1}^{k_j-1}\left(1-\frac{k}{N_j}\right)
  \leq 1-\prod_{j\in V}\left(1-\sum_{k=1}^{k_j-1}\frac{k}{N_j}\right)
  = 1-\prod_{j\in V}\left(1-\frac{k_j(k_j-1)}{2N_j}\right)
\end{align*}
where the penultimate expression is easily obtained by induction on the
truncations $k_j$.\hfill{$\qed$}

\bibliography{graphon-1}
\bibliographystyle{abbrv}
\end{document}